\documentclass[12pt,twoside,reqno]{amsart}
\usepackage{amssymb,amsmath,amstext,amsthm,amsfonts}
\usepackage[ansinew]{inputenc} 
\usepackage{graphicx}
\usepackage[mathscr]{eucal}
\usepackage{hyperref}

%%%%Square bullets
\newcommand{\blob}{\rule[.2ex]{.8ex}{.8ex}}

%%%%%%%%%%%%%%%%

\newcommand{\R}{\mathbb{R}}
\newcommand{\C}{\mathbb{C}}
\newcommand{\N}{\mathbb{N}}
\newcommand{\Z}{\mathbb{Z}}
\newcommand{\Q}{\mathbb{Q}}
\newcommand{\T}{\mathbb{T}}
\newcommand{\SL}{{\rm SL}}
\newcommand{\GL}{{\rm GL}}
\newcommand{\symp}{{\rm sp}}
\newcommand{\Sym}{{\rm Sym}}
\newcommand{\Mat}{{\rm Mat}}
\newcommand{\zero}{{\rm O}}
\newcommand{\tr}{\mbox{tr}}

\newcommand{\Jet}{\mathscr{J}{\rm et}}
\newcommand{\V}{\mathscr{V}}

\newcommand{\ep}{\epsilon}  
\newcommand{\epob}{\overline{\ep_0}} 
\newcommand{\epub}{\overline{\ep_1}} 
\newcommand{\la}{\lambda}

\newcommand{\om}{\omega}
\newcommand{\Wf}{W^\flat}
\newcommand{\Ws}{W^\sharp}
\newcommand{\vpsi}{\vec{\psi}}
\newcommand{\Arho}{C^{\om}_{\rho}}

%%%Lyapunov exponents
 %% 1st Lyapunov exponent with energy E
 %%% kth Lyapunov exponent with energy E

  %% 1st Lyapunov exponent
     %%% kth Lyapunov exponent 
\newcommand{\LE}[1]{L^{({#1})}}  %%%  Lyapunov Exponent Function

\newcommand{\uk}{u^{(k)}}
\newcommand{\Sk}{S^{(k)} }
\newcommand{\gak}{\gamma^{(k)}}
\newcommand{\Dmat}{U}
\newcommand{\adjW}{\tilde{W}}

\newcommand{\Ae}{A_{\la, E}}

\newcommand{\abs}[1]{\left\vert{#1}\right\vert} % absolute value
\newcommand{\norm}[1]{\left\|{#1}\right\|} % norm
 %  average

\newcommand{\Ann}{\mathscr{A}}
\newcommand{\strip}{\mathscr{A}}

\newcommand{\Circ}{\mathscr{C}}
\newcommand{\Herm}{{\rm Herm}}

%%%%% Restricted to:
\newcommand\restr[2]{{% we make the whole thing an ordinary symbol
  \left.\kern-\nulldelimiterspace % automatically resize the bar with \right
  #1 % the function
  \vphantom{\big|} % pretend it's a little taller at normal size
  \right|_{#2} % this is the delimiter
  }}
%%%%%%%%%%%%%%

\theoremstyle{plain}
\newtheorem{theorem}{Theorem}[section]
\newtheorem{proposition}{Proposition}[section]
\newtheorem{corollary}[proposition]{Corollary}
\newtheorem{lemma}[proposition]{Lemma}
\newtheorem{definition}{Definition}[section]
\newtheorem{remark}{Remark}[section]

\numberwithin{equation}{section}

\title[Higher dimensional quasiperiodic cocycles]{ Positive Lyapunov exponents for higher dimensional quasiperiodic cocycles }

\date{}

\begin{document}

\author[P. Duarte]{Pedro Duarte}
\address{departamento de matem\'atica and cmaf \\
faculdade de ci\^encias\\
universidade de lisboa\\
lisboa, portugal 
}
\email{pduarte@ptmat.fc.ul.pt}

\author[S. Klein]{Silvius Klein}
\address{cmaf,
faculdade de ci\^encias\\
universidade de lisboa\\
lisboa, portugal 
\& imar, 
bucharest, romania }
\email{silviusaklein@gmail.com}

\begin{abstract}We consider an $m$-dimensional analytic cocycle \\ $\displaystyle \T \times \R^m \ni (x, \vec{\psi}) \mapsto (x + \om, A (x) \cdot \vpsi) \in \T \times \R^m$, where $\om \notin \Q$ and $A \in C^\om (\T, \Mat_m (\R))$. Assuming that the  $d \times d$ upper left corner block of $A$ is typically large enough, we prove that the $d$ largest Lyapunov exponents associated with this cocycle are bounded away from zero. The result is uniform relative to certain measurements on the matrix blocks forming the cocycle. As an application of this result we obtain nonperturbative (in the spirit of Sorets-Spencer theorem) positive lower bounds of the nonnegative Lyapunov exponents for various models of band lattice Schr\"{o}dinger operators.
\end{abstract}

\maketitle

\section{Introduction, definitions and notations}\label{introduction}

In this paper we consider a higher dimensional analytic linear cocycle $A_\la (x)$ or a family  $A_{\la, E} (x)$ indexed by the parameter $E \in \R$  of such cocycles. We make assumptions on a designated upper left corner block of this (family of) cocycle(s) which ensure that this block is ``typically large'' enough. Under these assumptions we prove that the $d$ largest Lyapunov exponents of this (family of) cocycle(s) are positive, where $d$ is the dimension of the designated large block. 
The result is nonperturbative (i.e. independent of the underlying frequency) and uniform relative to certain measurements on the matrix blocks forming the cocycle. 

By a ``typically large'' matrix block we understand one which consists of a \emph{transversal} matrix-valued function or which factors out as a product of transversal matrix-valued functions,  multiplied by a large enough constant $\la$.
The \emph{transversality} condition on such a  matrix-valued function will either mean that it is not identically singular, or else that it has no
constant eigenvalues. We establish the genericity of this last condition both in a topological sense (meaning
for an open and dense set of potentials) and in some strong algebraic geometric sense (where it refers to the complement of some `algebraic subvariety' of $\infty$-codimension in the  space of analytic potentials). 

%Moreover, by a \emph{large} matrix we understand one whose minimum expansion is large, where the minimum expansion of a square matrix $A$ is defined as  $\displaystyle m (A) =  \min_{\norm{\vec{v}}=1} \norm{A\,\vec{v}}.$

A natural problem related to the results in this paper is finding a suitable generic condition on the cocycle that would ensure not only positivity of the Lyapunov exponents, but also gaps between the Lyapunov exponents. This is the assumption under which H\"{o}lder continuity of the Lyapunov exponents has been recently established in \cite{Schlag}.

%The terms ``energy'', ``potential'', ``coupling''  in the context of this family of linear cocycles were borrowed from the theory of Schr\"odinger operators,  to which our main results were designed to apply. 

Choosing the matrix blocks forming the family of cocycles $A_{\la, E} (x)$ appropriately, we show that our result on positivity of the Lyapunov exponents applies to cocycles associated with various models of Schr\"{o}dinger operators on a band lattice (which some authors call a strip).  These models include all finite range hopping Schr\"odinger operators, both on the integer lattice and on band lattices. The large enough constant $\la$ will be the coupling constant, while the parameter $E$ will be the energy corresponding to these models of Schr\"odinger operators.

In the (one-dimensional) integer lattice case, the Lyapunov exponent characterizes the absolutely continuous spectrum of the Schr\"odinger operator completely. This is due to Kotani's theory, which implies the absence of absolutely continuous spectrum when the (largest) Lyapunov exponent is positive. Kotani's theory has an extension to the standard band lattice model (see \cite{Simon-Kotani}), so at least in that case, our result implies absence of absolutely continuous spectrum. While the Lyapunov exponents cannot distinguish between point and singular spectra, we expect that under suitable assumptions, pure point spectrum with exponentially decaying eigenfunctions (i.e. Anderson localization) will hold for the general band lattice models we consider here.

Standard results on the positivity of the Lyapunov exponent for one-dimensional Schr\"{o}dinger operators (i.e. $\SL_2(\R)$ Schr\"{o}dinger cocycles) are due to M. Herman (see \cite{H}) and E. Sorets, T. Spencer (see \cite{S-S}). 
Both results use complexification and subharmonicity in order to avoid the set $\{ x \colon v (x) - E \approx 0 \}$ (see also chapter 3 in \cite{B}). 

Quasi-periodic band lattice Schr\"{o}dinger operators  lead to higher dimensional Schr\"{o}dinger cocycles.  I. Ya. Goldsheid and E. Sorets (see \cite{SG}) proved positivity of the Lyapunov exponents for such a model where the potential has typically large enough quasi-periodic diagonal and constant off-diagonal entries. J. Bourgain and S. Jitomirskaya (see \cite{BJ}) proved Anderson localization for such a model where the potential function is a quasi-periodic diagonal matrix with typically large enough entries.

Our paper extends both in scope and in approach Sorets-Spencer's theorem to higher dimensional general cocycles. In particular, it also applies to complex valued cocycles (see section~\ref{consequences}).

\smallskip

We now introduce the basic definitions and notations used in the paper. 

The letter $\T$ will refer to the additive group $\T=\R/\Z$.
Given an irrational number $\omega\in\R\setminus\Q$,
consider the translation $T=T_\omega:\T\to\T$, $T x = x+\omega\, {\rm mod} \, \Z$,
which is an ergodic transformation with respect to the Haar measure $dx$ on $\T$.
Any measurable function $A:\T\to \Mat_m(\R)$ determines a skew-product map
$F:\T\times\R^m\to \T\times \R^m$ defined by $F(x,v)=(T x, A(x)\,v)$.
The dynamical  system underlying such a map is called a {\em linear cocycle}
over the translation $T$.
Sometimes, when $T$ is fixed, the measurable function $A$ is also referred to as a linear cocycle.
The iterates of $F$ are given by  $F^n(x,v)=(x+n\,\omega, M_n(x)\,v)$,  where
$$M_n(x)= A(x+(n-1)\omega) \, \ldots \, A(x+\omega)\,A(x)$$
The cocycle is called {\em integrable }\, when
$\int_\T \log^+ \norm{A(x)}\, d x <\infty$ and
$\int_\T \log^+ \norm{A(x)^{-1}}\, d x <\infty$,
where \, $\log^+(x) = \max\{\log x, 0\}$.
In 1965 Oseledets proved his famous Multiplicative Ergodic Theorem, 
which when applied to the previous class of cocycles  says that if 
$A$ is integrable  then there are: numbers $L^{(1)}\geq L^{(2)} \geq \ldots \geq L^{(m)}$,
 an $F$-invariant measurable decomposition
$\R^m=\oplus_{j=1}^{\ell} E^{(j)}_x$,
and a non decreasing surjective map $k:\{1,\ldots, m\}\to \{1,\ldots, \ell\}$ such that
for almost every $x\in \T$, every $1\leq i\leq m$ and every $v\in E^{(k_i)}_x$,
$$\LE{i} = \lim_{n\to\pm \infty}\frac{1}{n}\,\log \norm{ M_n(x)\, v}$$ 
Moreover,  $L^{(i)}=L^{(i+1)}$ if and only if $k_{i}=k_{i+1}$,
and the subspace  $E_x^{(j)}$ has dimension equal to $\#k^{-1}(j)$.
The numbers $\LE{i}$ are called the {\em Lyapunov exponents}  of $F$, or of $A$.
If furthermore $A$ takes values in $\SL_m(\R)$, 
the Lyapunov exponents  satisfy the relation
$\sum_{i=1}^{m} \LE{i}  = 0$, while if $m = 2 d$ and  $A$ takes values in $\symp_d(\R)$, then $\LE{2 d + 1 - i} = - \LE{i}$ for all $1\le i \le d$.

Defining $s_n^{(1)}(x)\geq s_n^{(2)}(x) \geq \ldots \geq s_n^{(m)}(x)$ to be the
singular values of the product matrix $M_n(x)$, it is easy to verify that  the Lyapunov exponents are
\begin{equation}\label{formula-lyap-i}
 \LE{i} = \lim_{n\to\infty}\frac{1}{n}\,\log s^{(i)}_n(x)\quad \text{ for  a.e.}\, x\in \T
\quad (1\leq i\leq m)
\end{equation}
From  Birkhoff's Ergodic Theorem, 
the largest Lyapunov exponent is
\begin{align}
\LE{1}  &= \lim_{n\to\infty}\frac{1}{n}\,\log \norm{ M_n(x) }\;\;
\text{ for  a.e.}\, x\in \T  \nonumber \\
&= \lim_{n\to\infty}\frac{1}{n}\,\int_\T \log \norm{ M_n(x) }\,dx \label{formula-lyap}
\end{align}

%\smallskip

Throughout this paper, we will denote by $\Sym_d(\R)$,  $\Sym_d(\C)$ and $\Herm_d(\C)$ the space of $d\times d$ real symmetric, complex symmetric and respectively Hermitian matrices. If $\mathscr{M}$ is any real vector space, say $\Sym_d(\R)$ or $\Mat_m(\R)$, we denote by 
$C^\omega(\T,\mathscr{M})$  the space of
real analytic functions $V : \T \to \mathscr{M}$.
%which we identify with  $1$-periodic functions $V : \R \to \mathscr{M}$.
For any $0<\rho<1$,  
$$\Ann_\rho:=\{\, z\in\C\,:\, 1-\rho \leq \abs{z} \leq 1+\rho \,\}$$
denotes  the $2\rho$-width {\em annulus}. 
Let  $C^\omega_\rho(\T,\mathscr{M})$ (mind the subscript $\rho$)
be the subspace of all functions $V(x) \in C^\omega(\T,\mathscr{M})$ which have a  holomorphic extension   
$V(z):{\rm int}(\Ann_\rho)\to\mathscr{M}^\C=\mathscr{M}\otimes \C$,
which are continuous up  to the boundary of $\Ann_\rho$.
Endowed with the norm
$\norm{V}_\rho =\max_{z\in \Ann_\rho} \norm{V(z)}$,
the space $C^\omega_\rho(\T,\mathscr{M})$ becomes a Banach space.
Given any submanifold $\mathscr{N}\subset \mathscr{M}$, let 
$C^\omega_\rho(\T,\mathscr{N})$ be the Banach submanifold of all functions
$V\in C^\omega_\rho(\T,\mathscr{M})$ such that
$V(x)\in\mathscr{N}$, for every $x\in\T$. 
Elements in the spaces $C^\omega_\rho(\T,\SL_m(\R))$ 
and $C^\omega_\rho(\T,\SL_m(\C))$  are called
{\em analytic cocycles}. The blocks forming an analytic cocycle are matrix-valued analytic functions. 
%while elements in  $C^\omega_\rho(\T,\Sym_d(\R))$ and $C^\omega_\rho(\T,\Herm_d(\C))$  will be referred to as {\em analytic potentials}.

We introduce some measurements on a matrix-valued analytic function $V:\T\to\Mat_d(\R)$.
We denote by $N_\rho(V)$ the number of zeros of
$\det(V(z))$ in $\Ann_\rho$, and by
$\beta_\rho(V)$ the minimum value on  $\Ann_\rho$ of the holomorphic function obtained by factoring out all zeros of
$\det(V(z))$. Moreover, for  $V:\T\to\Sym_d(\R)$ we consider  the functions
$\widehat{N}_\rho(V)=\max_{E\in\R} N_\rho(V-E\cdot I)$ and 
$\widehat{\beta}_\rho(V)=\min_{E\in\R}  \beta_\rho(V-E\cdot I)$.
For every 
$V\in C^\omega_\rho(\T,\Sym_d(\R))$ which satisfies a generic transversality condition, we will show that $\widehat{N}_\rho(V)<+\infty$ and
$\widehat{\beta}_\rho(V)>0$  and moreover,
that these quantities depend continuously on $V$ (see section~\ref{bounds} for details).

%\smallskip

The paper is organized as follows. In section~\ref{main_section} we present the two main statements, Theorem~\ref{main} and Theorem~\ref{main-allE}, and the main application Theorem~\ref{main-ap}. The following two sections describe the assumptions made in the main statements on the designated upper left corner block: in section~\ref{genericity_section} we show that these assumptions are generic in a strong sense, while in section~\ref{bounds} we define certain measurements on the function(s) forming this block and we show that they depend continuously on it. The subsequent two sections contain the main technical tools used in the proof, described in general terms: in section~\ref{growth-lemma} we prove a growth result for products of block matrices that have a designated ``large'' block, while section~\ref{subharmonic_section} contains an estimate on the mean of a subharmonic function. Section~\ref{proof} contains the proof of the main statements, while in section~\ref{consequences} we show that our statements apply to cocycles associated to general band lattice Schr\"{o}dinger operators with both real and complex entries.

\medskip

\section{The main statements}\label{main_section}
We consider two families of $m$-dimensional  cocycles of the form  
\begin{equation}\label{(T,A)}
(T, A) \colon \T \times \R^m \ni (x, \vec{\psi}) \mapsto (x + \om, A (x) \cdot \vpsi) \in \T \times \R^m
\end{equation}
where $\om \in \R \setminus \Q$  and $A \in \Arho (\T, \Mat_m (\R))$.

In the first family of cocycles,  the matrix $A$ depends on a  coupling 
constant $\la$ and consists of block matrices of the form 
\begin{equation}\label{cocycle}
A_\lambda (x) = \left[ \begin{array}{ccc}
\lambda\, V  ( x )     & &   W^\flat (x)  \\ 
& & \\ 
W^\sharp (x)  & &  O (x) \\  \end{array} \right]  
\end{equation}
with the upper left corner being a square $d$-dimensional matrix block ($1 \le d < m$).

In the second  family of cocycles, the matrix $A$ depends on the coupling constant 
 $\la$ and on an energy parameter $E$,  and consists of block matrices of the form 
\begin{equation}\label{cocycle-allE}
A_{\la,E} (x) = \left[ \begin{array}{ccc}
\la \, \Dmat(x) (V  ( x ) - E \cdot I)  & &   W^\flat (x)  \\ 
& & \\ 
W^\sharp (x)  & &  O (x) \\  \end{array} \right]  
\end{equation}
where the upper left corner is a $d$-dimensional matrix block ($1 \le d < m$) with $V$ symmetric.

For both families we prove that if the  matrix-valued functions $V (x)$,
respectively $V(x)$ and $\Dmat(x)$, are transversal and if the  coupling constant $\la$ is large enough, then the $d$ largest Lyapunov exponents associated to these cocycles are bounded away from zero. 

The result is nonperturbative, in the sense that the threshold $\la_0$ on the size of the coupling constant $\la$ does not depend on the frequency $\om$ but only on certain measurements on the matrix-valued analytic  functions $V (x)$ and $\Dmat(x)$,
and on the sup norms of the other blocks $\Wf (x)$, $\Ws (x)$ and $O(x)$. In particular, the threshold $\la_0$ and the lower bounds on the Lyapunov exponents  are uniform in these measurements, which is what makes the statements below appear more technical.

\medskip

We make the following assumptions on the block matrices that form the first family of cocycles $A_\la (x)$.

\smallskip

$\blob$ Uniform bounds: 
for some constants $N \in \N$, $\beta > 0$ and $B>0$,
\begin{equation}\label{unifbound}
N_\rho (V) \leq  N \ \text{ and } \  \beta_\rho (V) > \beta
\end{equation}
\begin{equation}\label{supnorm}
\max \ \{\norm{ V }_\rho, \norm{ \Wf }_\rho, \norm{ \Ws }_\rho, \norm{ O }_\rho \} \leq B
\end{equation}

\smallskip

$\blob$ Transversality condition:
\begin{equation}\label{TC}
\det ( V (x) )  \not \equiv 0
\end{equation}

\medskip

For the second family of cocycles $\Ae (x)$,  we assume
that $V (x)$ is symmetric, and we make similar but stronger  (i.e. uniform in the parameter $E \in \R$) assumptions on its matrix blocks.
\smallskip

$\blob$ Uniform bounds:
for some constants $N_1, N_2 \in \N$, $\beta_1, \beta_2 > 0$ and $B>0$,
\begin{align}
 N_\rho (\Dmat)  \leq N_1 \ & \text{ and } \  \beta_\rho (\Dmat) > \beta_1 \label{unifbound-allE-U} \\
\widehat{N}_\rho (V) \le N_2 \ & \text{ and } \  \widehat{\beta}_\rho (V) > \beta_2 \label{unifbound-allE-V}
\end{align}
\begin{equation}\label{supnorm-all}
 \max \ \{\norm{ V }_\rho, \norm{ \Dmat }_\rho, \norm{ \Wf }_\rho, \norm{ \Ws }_\rho, \norm{ O }_\rho \}\leq B
\end{equation}

\smallskip

$\blob$ Transversality condition:
\begin{align}
\det (\Dmat (x) )  \not \equiv 0 & \label{TC-U}\\
\det  ( V (x) - E \cdot I )  \not \equiv 0 &\  \text{ for any } \ E \in \R  \label{TC-allE}
\end{align}
The second condition above says that
$V (x)$ has no constant eigenvalues, as functions of $x$.

\medskip

\begin{theorem}\label{main}
Consider the cocycle \eqref{(T,A)} where $A_\la \in \Arho (\T, \Mat_m (\R))$ is defined as in \eqref{cocycle}. We assume the uniform bounds \eqref{unifbound} and \eqref{supnorm} and the transversality condition \eqref{TC}.  

There are constants $\la_0 (\rho, B, N, \beta, d)>0$ 
and $c=c (\rho, B, N, \beta, d)>0$
such that  for  $\la > \la_0,$  the $d$ largest Lyapunov exponents associated with this cocycle are positive:
\begin{equation}\label{poslyap}
L^{(k)}(A_\la) \ge   \log \la - k\,c \quad \text{ for all } \ 1 \le k \le d
\end{equation}
\end{theorem}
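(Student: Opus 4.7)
The plan is to convert the pointwise bounds in \eqref{poslyap} into cumulative bounds for the sums $L^{(1)} + \ldots + L^{(k)}$, and then extract the individual bounds by subtraction. Using the singular-value identity $s_n^{(1)}(x) \cdots s_n^{(k)}(x) = \norm{\wedge^k M_n(x)}$ together with \eqref{formula-lyap-i} and a Kingman-type argument,
\[
L^{(1)}(A_\la) + \ldots + L^{(k)}(A_\la) = \lim_{n\to\infty} \frac{1}{n} \int_\T \log \norm{\wedge^k M_n(x)}\,dx.
\]
Combining a lower bound on this cumulative quantity with the trivial upper bound $L^{(1)} + \ldots + L^{(k-1)} \le (k-1)\log(2B\la)$, which follows from $\norm{A_\la(x)} \le 2B\la$, yields $L^{(k)}(A_\la) \ge \log \la - kc$ for a suitable $c = c(\rho,B,N,\beta,d)$ after renormalising the constants.

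The heart of the argument is a pointwise estimate of $\log \norm{\wedge^k M_n(x)}$ for each $k \le d$. I would apply the growth lemma of Section~\ref{growth-lemma} to $M_n(x)$, viewed as a product of block matrices whose designated upper-left $d \times d$ block at step $j$ is $\la V(x + j\om)$; such a lemma should deliver an estimate of the form
\[
\log \norm{\wedge^k M_n(x)} \;\ge\; nk \log \la \;+\; \frac{k}{d}\sum_{j=0}^{n-1} \log\abs{\det V(x + j\om)} \;-\; nkC_1,
\]
valid for $\la$ large enough relative to $B$, with $C_1 = C_1(\rho,B,d)$. Integrating over $\T$ reduces matters to a lower bound on $\int_\T \log\abs{\det V(x)}\,dx$: by the transversality condition \eqref{TC}, $\log\abs{\det V(z)}$ extends to a subharmonic function on $\Ann_\rho$, and the bounds $N_\rho(V) \le N$, $\beta_\rho(V) > \beta$ allow one to factor out the finitely many zeros of $\det V(z)$ and bound the remaining holomorphic factor below by $\beta$ on $\Ann_\rho$, so that the subharmonic mean estimate of Section~\ref{subharmonic_section} yields $\int_\T \log\abs{\det V(x)}\,dx \ge -C_2(\rho, N, \beta, d)$.

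The main obstacle is the pointwise growth estimate itself. It has to be of the correct order $\la^{nk}$ for \emph{every} $k \le d$ — not just $k=d$, where a determinant computation essentially suffices, nor only $k=1$, where it is enough to propagate the leading direction — and it must be uniform in the frequency $\om$ and in the non-designated blocks $\Wf, \Ws, O$ beyond the sup-norm bound $B$. Delivering such an estimate requires tracking an invariant cone, or equivalently a stable Grassmannian subspace, associated with the large upper-left block, and controlling its tilt under iteration despite the mixing caused by the smaller off-diagonal blocks. Once the deterministic estimate is in place, the subharmonic tool of Section~\ref{subharmonic_section} is used both to absorb the small-measure set of $x$'s where some iterate $V(x+j\om)$ is near-singular and to integrate the Birkhoff sum of $\log\abs{\det V}$, so that the pointwise bound integrates to a clean lower bound on $L^{(1)}(A_\la) + \ldots + L^{(k)}(A_\la)$ uniformly in $n$, from which \eqref{poslyap} follows upon passing to the limit.
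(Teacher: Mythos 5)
Your overall framework — bounding $L^{(1)}+\cdots+L^{(k)}$ from below via exterior powers, subtracting the trivial upper bound on $L^{(1)}+\cdots+L^{(k-1)}$ — is the same as the paper's, and your claim that $\int_\T \log\abs{\det V(x)}\,dx \ge -C_2$ is true. But the heart of your argument, the pointwise estimate
\[
\log \norm{\wedge^k M_n(x)} \;\ge\; nk \log \la \;+\; \frac{k}{d}\sum_{j=0}^{n-1} \log\abs{\det V(x + j\om)} \;-\; nkC_1
\]
for $x\in\T$, is not proved, you yourself identify it as ``the main obstacle,'' and it is in fact false in general: if $\det V(x+j\om)=0$ (or is very small) at a single step $j$, the cone invariance condition $\norm{T_j S_j^{-1}}<1$ in Lemma~\ref{indgrowth} can be permanently destroyed, since the off-diagonal block $\Ws S + O T$ can then dominate $LS + \Wf T$ without any subsequent step repairing the situation. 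The additive ``Birkhoff sum of $\log\abs{\det V}$'' term in your proposed bound cannot account for this loss, because once the product $S_n$ leaves the cone, there is no reason for $\norm{\wedge^k M_n}$ to recover even if all later $V(x+j\om)$ are well-conditioned. The growth lemma of Section~\ref{growth-lemma} requires $m(L_j)\ge\la>3B$ \emph{uniformly} in $j$ precisely for this reason — it has no mechanism to absorb intermediate degenerations.

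The paper's route, which you are missing, is to avoid the exceptional real $x$'s entirely by complexifying \emph{before} invoking the growth lemma, not after. By Proposition~\ref{lower:bound:f(z)}, the assumptions $N_\rho(V)\le N$ and $\beta_\rho(V)>\beta$ guarantee a circle $\Circ=\{\abs{z}=1+y_0\}$ with $y_0\ll\rho$ on which $\abs{\det V(z)}\ge\ep_0$, hence (via \eqref{minexp-det}) $m(V(z))\ge\ep_1$ \emph{uniformly for all $z\in\Circ$}. Since $\Circ$ is invariant under the complexified rotation $z\mapsto e^{2\pi i\om}z$, the growth lemma applies along the entire orbit, giving $\norm{\wedge_k M_n(z)}\ge(\la\ep_1-B)^{kn}$ for all $z\in\Circ$ with no exceptional set and no Birkhoff sum. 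The subharmonic estimate of Proposition~\ref{subharmonic} is then used once, at the end, to transfer this uniform lower bound on the circle $\Circ$ to a lower bound on the \emph{mean} of $u^{(k)}$ over the unit circle $\T$. In particular, the subharmonic machinery is not a large-deviations device for absorbing bad sets of real phases, as you suggest; it is a convexity-of-circular-means argument. Replacing your unproven pointwise inequality with this complexification step gives the proof.
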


\medskip

When the parameter $E$ is fixed, the second family of cocycles $\Ae (x)$ is of the same type as the first. The following theorem says that if we assume instead the stronger transversality condition \eqref{TC-allE}  and the stronger uniform bounds \eqref{unifbound-allE-V} (which have the effect of increasing the size of the constants $\la_0$ and $c$), then the same result holds for the family of cocycles $\Ae (x)$ {\em uniformly} in the  parameter $E$.

\medskip
  
\begin{theorem}\label{main-allE}
Consider the cocycle \eqref{(T,A)} where $A_{\la,E} \in \Arho (\T, \Mat_m (\R))$ is defined as in \eqref{cocycle-allE},
with $V(x)$ symmetric. We assume the
uniform bounds \eqref{unifbound-allE-U}, \eqref{unifbound-allE-V} and \eqref{supnorm-all} 
and the transversality conditions \eqref{TC-U} and  \eqref{TC-allE}.  

Then there are constants $\la_0 (\rho, B, N_1, N_2, \beta_1, \beta_2, d)>0$ 
and $\hat{c} =\hat{c} (\rho, B, N_1, N_2, \beta_1, \beta_2, d)>0$ 
such that  for  $\la >  \la_0,$   the $d$ largest Lyapunov exponents associated with the cocycles $A_{\la,E}$ are positive:
\begin{equation}\label{poslyap-allE}
L^{(k)} (A_{\la,E}) \ge  \log \la -k\, \hat{c}\quad \text{ for all } \ E \in \R \ \text{ and } \ 1 \le k \le d
\end{equation}
\end{theorem}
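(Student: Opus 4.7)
The plan is to reduce Theorem~\ref{main-allE} to Theorem~\ref{main} by treating the upper-left corner $\la\,\Dmat(x)(V(x)-E\cdot I)$ as ``$\la$ times a single large block'' and then arranging that the associated measurements on this block be uniform in $E\in\R$. Set $V_E(x):=\Dmat(x)(V(x)-E\cdot I)$; the cocycle $A_{\la,E}$ then fits the template \eqref{cocycle} with upper-left block $\la\,V_E(x)$, and the transversality \eqref{TC} for $V_E$ is immediate from \eqref{TC-U} and \eqref{TC-allE} via the factorization $\det V_E=\det\Dmat\cdot\det(V-E\cdot I)$.

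The same factorization yields uniform-in-$E$ bounds on two of the three measurements: $N_\rho(V_E)\le N_1+N_2$, and, since $\min|fg|\ge(\min|f|)(\min|g|)$ on the zero-free factors, $\beta_\rho(V_E)\ge\beta_1\beta_2$. The remaining quantity $\|V_E\|_\rho\le B(B+|E|)$ is \emph{not} uniform in $E$ and would cause the constants supplied by Theorem~\ref{main} to blow up as $|E|\to\infty$. I will dispose of this by splitting into two regimes.

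For $|E|\le B+1$ the bound $\|V_E\|_\rho\le B(2B+1)$ is already uniform, so Theorem~\ref{main} applies directly with coupling $\la$ and large block $V_E$, giving $L^{(k)}(A_{\la,E})\ge\log\la-k\,c_1$ for some $c_1$ depending only on $\rho,B,N_1,N_2,\beta_1,\beta_2,d$. For $|E|>B+1$ I rewrite
\[
\la\,\Dmat(x)(V(x)-E\cdot I)\;=\;(-\la E)\cdot\Dmat(x)\bigl(I-E^{-1}V(x)\bigr)=:\tilde\la\,\widetilde V_E(x),
\]
with $|\tilde\la|=\la|E|$ and $\|\widetilde V_E\|_\rho\le 2B$. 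Since the eigenvalues of $V(z)$ have modulus at most $B<|E|$, $\det(V(z)-E\cdot I)$ has no zeros in $\Ann_\rho$ and is of order $|E|^d$ in modulus. Combining this with $\det\widetilde V_E(z)=(-E)^{-d}\det\Dmat(z)\det(V(z)-E\cdot I)$ one obtains $N_\rho(\widetilde V_E)\le N_1$ and $\beta_\rho(\widetilde V_E)\ge\tfrac12\beta_1$ uniformly in $E$. Theorem~\ref{main} then yields $L^{(k)}(A_{\la,E})\ge\log(\la|E|)-k\,c_2\ge\log\la-k\,c_2$, and setting $\hat c:=\max(c_1,c_2)$ furnishes the required uniform lower bound.

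The main obstacle will be the rescaling step in the large-$|E|$ regime: a naive estimate gives $\beta_\rho(\widetilde V_E)\ge\beta_\rho(V_E)/|E|^d$, losing a factor $|E|^{-d}$. This is compensated by the fact that $\beta_\rho(V-E\cdot I)$ itself grows like $|E|^d$ as $|E|\to\infty$, which follows because the zeros of $\det(V(z)-E\cdot I)$ are eigenvalues of $V(z)$ and therefore bounded in modulus by $B$. Carefully tracking this compensation, together with routine bookkeeping of the constants, is the only nontrivial work beyond invoking Theorem~\ref{main}.
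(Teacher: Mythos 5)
Your reduction is correct in its essentials, and it takes a genuinely different organizational route from the paper. The paper proves Theorem~\ref{main-allE} directly: it introduces the subharmonic functions $u^{(k)}(z)=\tfrac1n\log\norm{\wedge_k M_n(z)}$, splits into $|E|>2B$ vs.\ $|E|\leq 2B$, invokes the growth lemma and the convexity estimate of Proposition~\ref{subharmonic} separately in each regime, and remarks that Theorem~\ref{main} has a ``similar'' (simpler) proof. You instead treat Theorem~\ref{main} as a black box and derive Theorem~\ref{main-allE} from it, which is a cleaner, more modular presentation. The essential ideas are identical in the two approaches --- the split on $|E|$, the factorization $\det V_E=\det\Dmat\cdot\det(V-E\cdot I)$ together with Proposition~\ref{N,beta:additivity}, the use of the hatted measurements $\widehat N_\rho,\widehat\beta_\rho$ in the bounded-$E$ regime, and the rescaling to effective coupling $\la|E|$ in the large-$|E|$ regime (the paper's upper and lower bounds for large $E$ are both of order $k\log(\la|E|)$, which is exactly what your $\tilde\la=\la|E|$ device captures). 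What your packaging buys is a clear separation of concerns; what it costs is that it relies on Theorem~\ref{main} being independently established, which the paper asserts but does not write out, so a fully self-contained write-up would still need that proof.

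Two small points worth flagging. First, in the large-$|E|$ regime the bound $\beta_\rho(\widetilde V_E)\geq\tfrac12\beta_1$ is not right as stated: since $\beta_\rho(\widetilde V_E)=|E|^{-d}\beta_\rho(V_E)\geq\beta_1\bigl(1-B/|E|\bigr)^d$, the correct uniform bound for $|E|>B+1$ is $\beta_\rho(\widetilde V_E)\geq\beta_1/(B+1)^d$ (or $\beta_1/2^d$ if one uses the paper's threshold $|E|>2B$). The factor still depends only on $B$ and $d$, so this is a bookkeeping slip, not a gap. Second, when applying Theorem~\ref{main} in the bounded-$E$ regime you must feed in $B'=\max\{B(2B+1),B\}$ as the sup-norm bound (since $W^\flat,W^\sharp,O$ are unchanged), and in the large-$|E|$ regime $B''=\max\{2B,B\}$; both are determined by the original data, so uniformity in $E$ is preserved. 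With those adjustments the argument goes through and yields a $\la_0$ and $\hat c$ depending only on $\rho,B,N_1,N_2,\beta_1,\beta_2,d$, matching the claimed conclusion.
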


\smallskip

\begin{remark} \rm{The transversality conditions \eqref{TC}, \eqref{TC-U}, \eqref{TC-allE} are generic in a strong sense (see section \ref{genericity_section}).}
\end{remark}

\begin{remark} \rm{ The symmetry of $V(x)$
in Theorem~\ref{main-allE} is enough for our purposes
but not essential. The same statement holds for a non symmetric matrix-valued function $V(x)$ if we redefine
the measurements $\widehat{N}_\rho(V)$ and
$\widehat{\beta}_\rho (V)$ respectively as a maximum and
a minimum of the same quantities but over complex parameters $E$ (see definition~\ref{hat:functions}). }
\end{remark} 

\smallskip

We now describe the main application of Theorem~\ref{main-allE} to a mathematical physics model.

Let $W(x), R (x), D(x) \in \Mat_d (\R)$ for all $x \in \T$. Assume that $R(x)$ and $D (x)$ are symmetric and denote by $W^T (x)$ the transpose of the matrix $W (x)$. Moreover, for all $n \in \N$, denote 
\begin{equation}\label{wnrndn}
W_n (x) := W (x+n \om), R_n (x) := R (x+n \om), D_n (x) := D (x+n \om)
\end{equation}

Consider the {\em quasi-periodic} Schr\"{o}dinger (or Jacobi, as referred to by other authors) operator $H = H_{\la, x} $ acting on $l^2 (\Z, \R^d)$ by
\begin{equation}\label{J-op}
[ H_{\la, x} \, \vpsi ]_n := - (W_{n+1} (x) \, \vpsi_{n+1} + W^{T}_{n} (x) \, \vpsi_{n-1} + R_{n} (x) \, \vpsi_{n}) + \la \, D_n (x) \, \vpsi_n 
\end{equation}
where $\vpsi = \{ \vpsi_n \}_{n\in\Z} \in l^2 (\Z, \R^d)$ is any state, $x \in \T$ is a parameter that introduces some randomness into the system and $\la > 0$ is a coupling constant.

This model contains all quasi-periodic, finite range hopping Schr\"{o}dinger operators on integer or band integer lattices. The hopping term is given by the ``weighted'' Laplacian: 
\begin{equation}\label{w-Laplace}
[S_x \, \vpsi]_n :=   - W_{n+1} (x) \, \vpsi_{n+1} + W^{T}_{n} (x) \, \vpsi_{n-1} + R_{n} (x) \, \vpsi_{n}
\end{equation}
where the hopping amplitude is encoded by the quasi-periodic matrix valued functions $W_n (x)$ and $R_n (x)$, while the potential is given by the quasi-periodic matrix valued function $\la \, D_n (x)$. The physically more relevant situation is when $D_n (x)$ (hence $D(x)$) is a {\em diagonal} matrix, but our result applies to any symmetric matrices.

The associated Schr\"{o}dinger equation 
$$
H_{\la, x} \, \vpsi = E \, \vpsi
$$
for a (generalized) state  $\vpsi = \{ \vpsi_n \}_{n\in\Z} \subset \R^d$ and energy $E \in \R$, gives rise to a Schr\"{o}dinger cocycle $A_{\la, E} (x)$. Let $\LE{k} (A_{\la, E})$ be the $k$th Lyapunov exponent of this Schr\"{o}dinger cocycle.

\begin{theorem}\label{main-ap}
Consider the Schr\"{o}dinger equation associated to the operator \eqref{J-op}:
\begin{equation}\label{J-eq}
- (W_{n+1} (x) \, \vpsi_{n+1} + W^{T}_{n} (x) \, \vpsi_{n-1} + R_{n} (x) \, \vpsi_{n}) + \la \, D_n (x) \, \vpsi_n 
= E \, \vpsi_n
\end{equation}
where $\vpsi = \{ \vpsi_n \}_{n\in\Z} \subset \R^d$, $E \in \R$, and the hopping amplitude and the potential are defined as in \eqref{wnrndn}.

Assume that $W \in  \Arho (\T, \Mat_d (\R))$, $R \in  \Arho (\T, \Sym_d (\R))$ and $D \in  \Arho (\T, \Sym_d (\R))$. Assume moreover that
\begin{align}
\det [ W (x) ]  \not \equiv 0  \label{TC-W}\\
D (x)   \text{ has no constant eigenvalues}  \label{TC-D}
\end{align}

Then there are constants $\la_0 = \la_0 (W, R, D)$ and $c = c (W, R, D)$ such that if $\la > \la_0$, then the $d$ largest Lyapunov exponents associated with the equation~\eqref{J-eq} have the lower bounds:
\begin{equation}\label{main-ap-c}
\LE{k} (A_{\la, E}) \ge \log \la - c \quad \text{for all } E\in \R, \ 1 \le k \le d
\end{equation}

Moreover, the other $d$ Lyapunov exponents are the additive inverses of the $d$ largest Lyapunov exponents. 
 \end{theorem}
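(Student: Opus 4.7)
The plan is to reduce Theorem~\ref{main-ap} to Theorem~\ref{main-allE} by writing the transfer cocycle of the band Schr\"odinger equation \eqref{J-eq} in the form \eqref{cocycle-allE}, and then to recover the lower half of the Lyapunov spectrum via the symplectic symmetry of Schr\"odinger cocycles.

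Solving \eqref{J-eq} for $W_{n+1}\vpsi_{n+1}=(\la D_n-R_n-EI)\vpsi_n-W_n^T\vpsi_{n-1}$ and setting $\Phi_n:=(\vpsi_n,\vpsi_{n-1})^T\in\R^{2d}$ gives $\Phi_{n+1}=A_{\la,E}(x+n\om)\Phi_n$ with
\[
A_{\la,E}(x)=\begin{pmatrix} W(x+\om)^{-1}(\la D(x)-R(x)-EI) & -W(x+\om)^{-1}W^T(x) \\ I & 0 \end{pmatrix}.
\]
Making the bijective substitution $E=\la E'$, one has $\la D-R-EI=\la(V_\la-E'I)$ with $V_\la(x):=D(x)-R(x)/\la$, so setting $\Dmat(x):=W(x+\om)^{-1}$, $\Wf(x):=-W(x+\om)^{-1}W^T(x)$, $\Ws(x):=I$, and $O(x):=0$ puts $A_{\la,E}$ exactly in the form \eqref{cocycle-allE} in the new parameter $E'\in\R$. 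Since $\det W\not\equiv 0$ by \eqref{TC-W}, the zeros of $\det W(\cdot+\om)$ in $\Ann_\rho$ are finite in number; after shrinking $\rho$ slightly one has $A_{\la,E}\in\Arho(\T,\Mat_{2d}(\R))$ and $\Dmat\in\Arho$.

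The hypotheses of Theorem~\ref{main-allE} can then be verified: \eqref{TC-W} yields \eqref{TC-U} directly since $\det\Dmat=1/\det W(\cdot+\om)\not\equiv 0$; since $V_\la\to D$ uniformly on $\Ann_\rho$ as $\la\to\infty$ and the measurements $\widehat{N}_\rho,\widehat{\beta}_\rho$ are continuous in their argument by Section~\ref{bounds}, assumption \eqref{TC-D} gives $\la_1=\la_1(W,R,D)$ such that $\widehat{N}_\rho(V_\la)\le 2\widehat{N}_\rho(D)$ and $\widehat{\beta}_\rho(V_\la)\ge\tfrac12\widehat{\beta}_\rho(D)>0$ for all $\la\ge\la_1$ (and $V_\la$ is symmetric as $D$ and $R$ are); the sup-norm bounds \eqref{supnorm-all} come from those on $W,R,D$. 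Applying Theorem~\ref{main-allE} then gives constants $\la_0,\hat c$ depending only on $W,R,D$ such that $\LE{k}(A_{\la,E})\ge\log\la-k\hat c$ for $\la>\max(\la_0,\la_1)$, every $E\in\R$, and $1\le k\le d$; taking $c:=d\hat c$ yields \eqref{main-ap-c}. For the final sentence of the theorem, the self-adjointness of $H_{\la,x}$ implies that the cocycle preserves a Wronskian-type symplectic form $\omega_n(\Phi,\Phi')=\vpsi_{n-1}^TW_n\vpsi'_n-\vpsi_n^TW_n^T\vpsi'_{n-1}$; after a bounded conjugation identifying $\omega_n$ with the standard form, $A_{\la,E}$ becomes $\symp_d(\R)$-valued, and the identity $\LE{2d+1-k}=-\LE{k}$ recalled in the introduction supplies the negative half of the spectrum.

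The main obstacle is the matching step: the $-EI$ entry of the Schr\"odinger transfer matrix must be reconciled with the $-\la E'\Dmat$ structure required by \eqref{cocycle-allE}. The rescaling $E\mapsto\la E'$ forces the replacement of $D$ by $V_\la=D-R/\la$, introducing a $\la$-dependent perturbation of the \emph{large} block; the delicate point is to argue, through the continuity of the transversality measurements from Section~\ref{bounds}, that the constants $N_2,\beta_2$ controlling $V_\la$ can be chosen uniformly for all sufficiently large $\la$. A subsidiary issue is that $W(\cdot+\om)^{-1}$ is only meromorphic, which is addressed by slightly shrinking $\rho$ to avoid the finitely many zeros of $\det W(\cdot+\om)$ in the annulus.
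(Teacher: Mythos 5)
Your reduction to Theorem~\ref{main-allE} runs into a genuine obstruction at the step where you set $\Dmat(x)=W(x+\om)^{-1}$ and propose to make it analytic on an annulus by ``shrinking $\rho$ slightly.'' The hypothesis of Theorem~\ref{main-ap} is only $\det W(x)\not\equiv 0$; it does \emph{not} exclude zeros of $\det W$ on the torus $\T$ itself, and in fact the remark after the theorem emphasizes that in applications $W$ is a trigonometric polynomial whose determinant does vanish somewhere. Since every annulus $\Ann_\rho$ contains the unit circle, no amount of shrinking $\rho$ removes those poles of $W^{-1}$, so $A_{\la,E}$ is genuinely not in $\Arho(\T,\Mat_{2d}(\R))$, the quantity $N_\rho(\Dmat)$ is not even defined (it counts zeros of a \emph{holomorphic} function, and $\det\Dmat=1/\det W(\cdot+\om)$ has poles, not zeros), and Theorem~\ref{main-allE} cannot be applied to the cocycle as you have written it.

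The paper avoids this by multiplying the singular cocycle through by $g(x+\om)=\det W(x+\om)$: one sets $\Dmat(x)=\adjW(x+\om)$ (the adjugate), $\Wf(x)=-\adjW(x+\om)W^T(x)$, $\Ws(x)=g(x+\om)I$, $O(x)=0$, so all blocks are in $\Arho$. Theorem~\ref{main-allE} then applies to this regularized cocycle $\tilde{A}_{\la,E}=g(\cdot+\om)A_{\la,E}$, and one recovers the Lyapunov exponents of $A_{\la,E}$ via the scalar relation $L^{(k)}(\tilde{A}_{\la,E})=L(g)+L^{(k)}(A_{\la,E})$ together with $L(g)=\int_\T\log\abs{g}\,dx\in\R$. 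That extra step is not cosmetic; without it the entire reduction fails whenever $\det W$ vanishes on $\T$. Relatedly, your description of the symplectic structure via a ``bounded conjugation'' is too strong: the conjugating factor $C(x)=\diag(W(x),I)$ is not bounded away from singular when $\det W$ vanishes, and the paper only uses (and only needs) that $\log\norm{C},\log\norm{C^{-1}}\in L^1(\T)$ to transfer Lyapunov exponents to the symplectic cocycle $A^W_{\la,E}$. The rest of your argument (the $E\mapsto\la E'$ rescaling, identifying $V_\la=D-R/\la$, and the semicontinuity of $\widehat N_\rho$ and $\widehat\beta_\rho$ yielding uniform $N_2,\beta_2$ for all large $\la$) matches the paper.
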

 
 \begin{remark}
 \rm{ Assumptions \eqref{TC-W} and \eqref{TC-D} on the data are generic in a strong sense (see section~\ref{genericity_section}). In physics applications, the entries of the amplitude matrix-valued function $W (x)$ are usually trigonometric polynomials, hence the determinant will have some zeros.
 
 }
 \end{remark}

\medskip

\section{Genericity of the  potential function}\label{genericity_section}

Throughout this section we shall write
$\Sym_d$ and $\Mat_d$ as a short notation for $\Sym_d(\R)$ and $\Mat_d(\R)$, respectively.

Given a Banach space $E$, let us call a {\em finite codimension algebraic subvariety}
any subset $\Sigma\subset E$ such that for some continuous linear
epimorphism $\pi:E\to\R^n$, and for some algebraic subvariety $S\subset \R^n$ we have
$\Sigma=\pi^{-1}(S)$. The {\em codimension} of $\Sigma$ in $E$ is defined to be
the {\em codimension} of $S$ in $\R^n$.
The complement of an algebraic subvariety  $\Sigma\subset E$ with codimension $\geq 1$
is always a {\em prevalent set}, a concept introduced by J. Yorke et al. in~\cite{HSY}.
By definition, $E\setminus \Sigma$ is prevalent, since there is a measure $\mu$ compactly supported on $E$
{\em transverse} to $\Sigma$.  This measure $\mu$ can be taken to be the Lebesgue measure on some manifold $M\subset E$ transversal to $\Sigma$, with $\dim(M)={\rm codim}(\Sigma,E)$.

We say that a potential $V\in C^\omega_\rho(\T,\Mat_d)$ 
  {\em has no constant eigenvalues}\, if there is no
common eigenvalue  $E\in\C$  to all matrices $V(x)$ with $x\in\T$.
The main purpose of this section is to prove the following:

\begin{theorem}\label{generic}
Consider $E=\Mat_d$  or $E=\Sym_d$
and let $\mathscr{V}\subseteq  C^\omega_\rho(\T,E)$
 denote the subset of analytic potentials with no
constant eigenvalues. Then:
\begin{enumerate}
\item[(a)]  $\mathscr{V}$ is open and dense;
\item[(b)]  the complement of $\mathscr{V}$ in $C^\omega_\rho(\T,E)$
is contained in  algebraic subvarieties of arbitrary large codimension  
in $C^\omega_\rho(\T,E)$.
\end{enumerate}
\end{theorem}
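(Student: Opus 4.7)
The plan is to deduce both (a) and (b) from a finite-dimensional algebraic-geometric dimension count via an evaluation map at finitely many points. Fix $K$ distinct points $x_1, \ldots, x_K \in \T$ and consider the continuous linear map
$$\pi_K : C^\omega_\rho(\T, E) \to E^K, \qquad V \mapsto (V(x_1), \ldots, V(x_K)).$$
Analytic interpolation (e.g.\ using $b_i \in C^\omega_\rho(\T,\R)$ built from trigonometric polynomials with $b_i(x_j) = \delta_{ij}$, so that $V = \sum_i b_i M_i$ realizes any prescribed tuple) shows $\pi_K$ is surjective, hence a continuous linear epimorphism onto the finite-dimensional space $E^K$.

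The key observation is that if $V$ has a constant eigenvalue $E_0 \in \C$, then every matrix $V(x_i)$ admits $E_0$ as an eigenvalue, and hence $\pi_K(V)$ lies in the semi-algebraic set
$$\Sigma_K := \bigl\{(M_1, \ldots, M_K) \in E^K : \exists\, E_0 \in \C \text{ with } \det(M_i - E_0 I) = 0 \text{ for all } i \bigr\}.$$
I would then bound the codimension of $\Sigma_K$ by dimension counting. In the case $E = \Sym_d$, eigenvalues are real, so every element of $\Sigma_K$ factors as $M_i = N_i + E_0 I$ with $E_0 \in \R$ and $N_i$ singular symmetric; hence $\Sigma_K$ is the image of a polynomial map from the algebraic set $\R \times \{\det = 0\}^K$ of real dimension $1 + K(\dim \Sym_d - 1)$, forcing real codimension $\ge K - 1$ in $\Sym_d^K$. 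For $E = \Mat_d$ the same argument applies to the stratum where $E_0$ is real (codim $\ge K - 1$), and for the stratum of complex-conjugate-pair common eigenvalues $E_0 = a + ib$ with $b \neq 0$ the constraint amounts to $2K$ real equations in $2$ extra real parameters, giving codim $\ge 2K - 2$. Taking the $\R$-Zariski closure $S_K := \overline{\Sigma_K}^{\mathrm{Zar}}$ yields an algebraic subvariety of $E^K$ of the same real codimension $\ge K - 1$; the key point is that the Zariski closure does not increase the dimension of the image of a polynomial map between real algebraic sets, which is a transcendence-degree argument (the complexified closure has complex dimension bounded by the dimension of the source). Then $\pi_K^{-1}(S_K)$ is a finite-codimension algebraic subvariety of $C^\omega_\rho(\T, E)$ of codimension $\ge K - 1$ containing $\mathscr{V}^c$; letting $K \to \infty$ furnishes algebraic subvarieties of arbitrarily large codimension, proving (b).

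Claim (a) follows readily. Density is (b) applied with $K = 2$: $\mathscr{V}^c$ is contained in an algebraic subvariety $\pi_K^{-1}(S_K)$ with $S_K$ a proper algebraic subset of $\R^n$, hence of empty Euclidean interior. Since $\pi_K$ is an open map (continuous linear surjection onto a finite-dimensional space), $\pi_K^{-1}(S_K)$ has empty interior in $C^\omega_\rho(\T, E)$, so $\mathscr{V}$ is dense. For openness, if $V_n \to V$ in $C^\omega_\rho(\T, E)$ with each $V_n$ having constant eigenvalue $E_n \in \C$, the bound $\abs{E_n} \le \norm{V_n}_\rho$ makes $\{E_n\}$ bounded; along a subsequence $E_{n_k} \to E_\infty$, and uniform convergence of $V_n$ together with continuity of $\det$ allow passage to the limit in $\det(V_{n_k}(x) - E_{n_k} I) \equiv 0$, yielding $\det(V(x) - E_\infty I) \equiv 0$, so $V \in \mathscr{V}^c$; hence $\mathscr{V}^c$ is closed.

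The main technical delicacy is verifying the codimension bound for $\Sigma_K$ carefully --- handling the complex-conjugate-pair stratum in the $\Mat_d$ case, and ensuring that the passage to the real Zariski closure of the semi-algebraic set $\Sigma_K$ preserves its dimension, which ultimately rests on the transcendence-degree/dominant-map argument sketched above.
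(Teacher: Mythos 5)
Your proof is correct, and it takes a genuinely different route from the paper's. You evaluate $V$ at $K$ points and study the semi-algebraic set $\Sigma_K$ of $K$-tuples with a common eigenvalue, bounding its codimension by $K-1$ via a parametrization dimension count and then passing to the Zariski closure to obtain a genuine algebraic subvariety. The paper instead evaluates the $d$-jet of $V$ at $N$ points and, via Proposition~\ref{non-degenerate:nctev}, shows that a constant eigenvalue forces the single polynomial $\det \mathscr{I}({\rm jet}^d_x(V))$ to vanish at every $x$; taking $\Sigma$ to be the set of jet-tuples where this determinant vanishes at each of the $N$ points gives an honest algebraic variety of codimension exactly $N$, with one polynomial equation per point in disjoint groups of variables and hence no need for Tarski--Seidenberg, a parametrized dimension count, or a Zariski-closure step. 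So the paper pays for more data (derivatives up to order $d$ rather than only values) and buys cleaner algebraic geometry; your route is conceptually more direct but must discharge more real-algebraic-geometry background. Your openness argument (extract a convergent subsequence of the common eigenvalues $E_n$ and pass to the limit in $\det(V_n(x) - E_n I) \equiv 0$) differs from the paper's, which uses Lipschitz continuity of eigenvalues, but is equally valid. One soft spot worth tightening: in the $\Mat_d$ case, the phrase ``$2K$ real equations in $2$ extra real parameters, giving codim $\ge 2K-2$'' reads like an expected-codimension count, which on its own gives only an \emph{upper} bound on codimension; the rigorous lower bound should go through the incidence variety exactly as you do in the symmetric case --- for fixed nonreal $E_0$ the set $\{M : \det(M - E_0 I) = 0\}$ has real codimension exactly $2$, being the vanishing of the two coefficients of the linear remainder of $\chi_M(t)$ modulo $(t-E_0)(t-\overline{E_0})$, which are affine coordinates in the image of the surjective map $M \mapsto (c_1(M),\ldots,c_d(M))$, so the incidence variety fibered over the $2$-dimensional base of nonreal $E_0$ has dimension at most $2 + K(d^2 - 2)$, and its projection to $\Mat_d^K$ therefore has codimension at least $2K-2$.
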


\smallskip

Fix $d\geq 1$, and for any $1\leq k\leq d$ consider the $k$-th elementary symmetric function
$$e^d_k(\lambda_1,\ldots, \lambda_d)=(-1)^k\sum_{1\leq i_1<i_2<\ldots < i_k\leq d}
\lambda_{i_1} \lambda_{i_2}\ldots \lambda_{i_k}\;.$$  
We have 
$$e^d_1 (\lambda_1,\ldots, \lambda_d)=-(\lambda_1+ \ldots + \lambda_d)\; \text{ and }\;
e^d_d (\lambda_1,\ldots, \lambda_d)=(-1)^d\lambda_1 \ldots  \lambda_d\;.$$
Define $e_k:\Mat_{d} \to \R$ by $e_k(A)=e^d_k(\lambda_1,\ldots, \lambda_d)$,
where $\lambda_1,\ldots, \lambda_d$ are the eigenvalues of $A$.
Note that $e_1(A)=-\tr(A)$ and $e_d(A)=\pm\det(A)$.
By the considerations at the end of section~\ref{growth-lemma},
we have  $e_k(A)=(-1)^k \tr(\wedge_k A)$.
Hence, for each $1\leq k\leq d$, $e_k(A)$ is a homogeneous polynomial of degree $k$ in the
entries of $A$, and for every $E\in\R$ and $A\in \Mat_{d}$,
\begin{equation}\label{charac:eq}
 \det(A-E\,I)=\sum_{k=0}^d  e_k(A)\, E^{d-k} \;.
\end{equation}
Set $\lambda_E: \C^d\to\C$ to be the affine form 
$$\lambda_E(x_1,\ldots, x_d)=E^d + \sum_{k=1}^d x_k \, E^{d-k}\;,$$
and $e_\ast:\Mat_d\to \R^d$, the non-linear map
 $e_\ast(A)=(e_1(A),\ldots, e_d(A))$. 

For $E=\Mat_d$ and $E=\Sym_d$, define $\Jet^d(E)=E^{d+1}$ as the space of $d$-jets of  $E$-valued one variable functions. The $d$-jet of a function $V:\T\to E$, at a point $x\in\T$,
is the vector ${\rm jet}^d_x(V)=(V(x),V'(x),\ldots, V^{(d)}(x))$.
Each potential $V\in C^\omega_\rho(\T,E)$  induces an analytic curve
${\rm jet}^d(V):\T\to \Jet^d(E)$ in the space of $d$-jets.

Next we define a map $\mathscr{I}:\Jet^d(E)\to \Mat_{d}$ setting
$$ \mathscr{I}(A_0,A_1,\ldots, A_d)= \left(  \frac{d^i}{d t^i}\left[ e_j\left(
  \sum_{k=0}^d \frac{t^ k}{k!}\,A_k\right) \right]_{t=0}\right)_{1\leq i,j\leq d}$$
The map $\mathscr{I}$ is defined so that
  $\mathscr{I}({\rm jet}^d_x(V))$ is the matrix with rows  
  $(e_\ast\circ V)'$, $(e_\ast\circ V)''$, $\ldots$, $(e_\ast\circ V)^{(d)}$.
We say that a potential $V:\T\to E$ is {\em non-degenerate} at a point $x\in\T$ if
$\det\mathscr{I}({\rm jet}^d_x(V))\neq 0$, otherwise  $V$ is said to be {\em degenerate} at $x$.

\begin{proposition}\label{non-degenerate:nctev}
If a potential $V$ is non-degenerate at some point $x\in\T$ then
$V$ has no constant eigenvalues.
\end{proposition}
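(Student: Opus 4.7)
The plan is to prove the contrapositive: if some $E \in \C$ is a common eigenvalue to all the matrices $V(x)$, then $V$ is degenerate at \emph{every} $x \in \T$, and in particular at the given point. In fact the argument will show that $\mathscr{I}({\rm jet}^d_x(V))$ has a universal nonzero vector in its kernel built solely out of the putative constant eigenvalue $E$.

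First I would translate the hypothesis of a constant eigenvalue into a scalar identity. Having $E$ as an eigenvalue of every $V(x)$ means $\det(V(x) - E \cdot I) \equiv 0$ on $\T$, which by formula \eqref{charac:eq} is exactly
\begin{equation*}
\lambda_E(e_\ast(V(x))) = E^d + \sum_{k=1}^d (e_k \circ V)(x) \, E^{d-k} \equiv 0 .
\end{equation*}
Then I would differentiate this identity $i$ times in $x$ for each $1 \leq i \leq d$. Because $\lambda_E$ is affine in the $(x_1,\ldots,x_d)$-variables and its only nontrivial constant term is $E^d$, that constant is killed by one derivative, leaving
\begin{equation*}
\sum_{k=1}^d E^{d-k} \, (e_k \circ V)^{(i)}(x) = 0 , \qquad 1 \leq i \leq d.
\end{equation*}
By the very construction of $\mathscr{I}$, the quantity $(e_k \circ V)^{(i)}(x)$ is the $(i,k)$-entry of $\mathscr{I}({\rm jet}^d_x(V))$. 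Hence the column vector $v := (E^{d-1},E^{d-2},\ldots,E,1)^{T}$ lies in the kernel of $\mathscr{I}({\rm jet}^d_x(V))$. Since its last entry equals $1$, we have $v \neq 0$, forcing $\det \mathscr{I}({\rm jet}^d_x(V)) = 0$ at every $x$, i.e.\ degeneracy everywhere.

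There is essentially no obstacle in this argument; it is just an unwinding of the identity \eqref{charac:eq} via iterated differentiation. The one minor point to keep in mind is that the common eigenvalue $E$ may be complex while $\mathscr{I}({\rm jet}^d_x(V))$ is a real matrix, but a real matrix with a nontrivial kernel over $\C$ also has vanishing determinant over $\R$, so no separate treatment of real and complex eigenvalues is needed.
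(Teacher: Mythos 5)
Your proof is correct and is essentially the paper's argument: the paper observes that the range of $e_\ast\circ V$ lies in the affine hyperplane $\{\lambda_E=0\}$, so the rows $(e_\ast\circ V)^{(i)}(x)$ all lie in the parallel linear hyperplane and are hence linearly dependent, which is precisely the statement that the vector $(E^{d-1},\ldots,E,1)^T$ you exhibit lies in the kernel. Your explicit kernel vector and the remark about real versus complex $E$ are just a slightly more concrete packaging of the same reasoning.
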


\begin{proof}
Assume $E\in\C$ is a common eigenvalue to all matrices $V(x)$.
Then $\lambda_E(e_\ast (V(x))=\det(V(x)-E\,I)\equiv 0$.
This implies that the range of $e_\ast \circ V$ is contained in the hyperplane
$\{\lambda_E=0\}$. Hence the rows  of $\mathscr{I}({\rm jet}^d_x(V))$,
$(e_\ast\circ V)'(x),\ldots, (e_\ast\circ V)^{(d)}(x)$,
are linearly dependent. Thus $V$ is degenerate at every point  $x\in\T$.
\end{proof}

\begin{proof}[Proof of Theorem~ \ref{generic}]
Let us first prove part (b).
Fix $N$ distinct points $x_1, x_2, \ldots, x_N\in\T$ and consider the  linear map
$J:C^\omega_\rho(\T,E)\to \Jet^d(E)^N$  defined by
$J(V)=\{\,{\rm jet}^d_{x_i}(V)\,\}_{1\leq i\leq N}$. Clearly $J$ is a continuous epimorphism.
Note that given a finite set of $d$-jets at the points 
$x_1, x_2, \ldots, x_N$,
we can always interpolate them with a trigonometric polynomial
with coefficients in the space $E$.
Define now $ \Sigma$ to be the set of all families of $d$-jets
$ \{\underline{A}_i\}_{1\leq i\leq N}$ such that $\det(\mathscr{I}( \underline{A}_i ))=0$,
for every $1\leq i\leq N$. The set $\Sigma$ is an algebraic variety of codimension $N$
in $\Jet^d(E)^N$.
Therefore, $\mathcal{S}= J^{-1}(\Sigma)$ is a algebraic subvariety of  codimension $N$ in
$C^\omega_\rho(\T,E)$. By proposition~\ref{non-degenerate:nctev},
every potential $V\in C^\omega_\rho(\T,E)$ with a constant eigenvalue
must be contained in $\mathcal{S}$.
Finally, the density part in (a) is a direct consequence of (b). To show that
$\mathscr{V}$ is open, note that the $i$th
eigenvalue $\lambda_i(x)$ of an analytic potential $V(x)$ is a  continuous function of $x$ and that
if $\widetilde{\lambda}_i(x)$ stands for the $i$th eigenvalue of another potential $\widetilde{V}(x)$, then we have $\vert \lambda_i(x)-\widetilde{\lambda}_i(x)\vert \leq \|V(x)-\widetilde{V}(x)\|$ (see for instance Lemma B.4 in~\cite{Chan}).
Hence if $\lambda_i(x)$ is not constant and if $\widetilde{V}$ is close
enough to $V$ then $\widetilde{\lambda}_i(x)$ cannot be constant either.
\end{proof}

\begin{corollary} 
The set of potentials $\Dmat\in C^\omega_\rho(\T,\Mat_d)$ such that \\ 
$\det(\Dmat(x)) \not \equiv 0$
satisfies  conditions (a) and (b) of Theorem~\ref{generic}.
\end{corollary}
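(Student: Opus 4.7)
The plan is to imitate the scheme of the proof of Theorem~\ref{generic}, but to replace the jet map and the degeneracy locus by a much simpler evaluation map and the singular-matrix locus. The key simplification is that the condition $\det(\Dmat(x))\not\equiv 0$ involves only a single polynomial per evaluation point on $\T$, rather than the $d$ polynomials of the characteristic equation used to detect a common eigenvalue, so no analogue of Proposition~\ref{non-degenerate:nctev} is required.

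For part (b), I would fix $N\geq 1$ arbitrary and pick $N$ distinct points $x_1,\ldots,x_N\in\T$. Consider the continuous linear evaluation map
\[
J\colon C^\omega_\rho(\T,\Mat_d)\to \Mat_d^N,\qquad J(\Dmat)=(\Dmat(x_1),\ldots,\Dmat(x_N)).
\]
This map is a continuous epimorphism because arbitrary matrix-valued data at $N$ distinct nodes can be interpolated entry-by-entry by a matrix-valued trigonometric polynomial, which trivially lies in $C^\omega_\rho(\T,\Mat_d)$. Set
\[
\Sigma_N=\{(A_1,\ldots,A_N)\in\Mat_d^N\,:\,\det(A_i)=0\text{ for every }1\leq i\leq N\}.
\]
Since the equations $\det(A_i)=0$ impose one independent polynomial condition on each factor, $\Sigma_N$ is an algebraic subvariety of codimension $N$ in $\Mat_d^N\cong\R^{N d^2}$. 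Hence $\mathcal{S}_N:=J^{-1}(\Sigma_N)$ is, by the very definition recalled at the beginning of section~\ref{genericity_section}, a finite codimension algebraic subvariety of $C^\omega_\rho(\T,\Mat_d)$ of codimension $N$, and any $\Dmat$ with $\det(\Dmat(x))\equiv 0$ is contained in $\mathcal{S}_N$ for every choice of the $N$ points. Letting $N$ be arbitrary yields~(b).

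For part (a), density is immediate from (b), exactly as in the proof of Theorem~\ref{generic}: the complement of an algebraic subvariety of positive codimension is always dense. Openness I would prove directly by a continuity argument: if $\det(\Dmat(x))\not\equiv 0$ then there exists $x_0\in\T$ with $\det(\Dmat(x_0))\neq 0$, and the scalar functional $\widetilde{\Dmat}\mapsto \det(\widetilde{\Dmat}(x_0))$ is continuous on $C^\omega_\rho(\T,\Mat_d)$, since $\|\widetilde{\Dmat}(x_0)-\Dmat(x_0)\|\leq \|\widetilde{\Dmat}-\Dmat\|_\rho$ and $\det$ is a polynomial in the matrix entries. Therefore a whole neighborhood of $\Dmat$ satisfies the same non-degeneracy condition. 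The main (and only) nontrivial step is the interpolation claim used to establish surjectivity of $J$; this is a routine consequence of the standard trigonometric interpolation theorem applied entry-by-entry, and I do not anticipate it posing any real obstacle.
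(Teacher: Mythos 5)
Your proof is correct, but it takes a genuinely different (and more self-contained) route than the paper's. The paper's proof is a one-liner: it observes that $\det(\Dmat(x))\equiv 0$ is equivalent to $0$ being a constant eigenvalue of $\Dmat$, so the complement of the set in question is contained in $C^\omega_\rho(\T,\Mat_d)\setminus\mathscr{V}$, and (b) and density follow directly from Theorem~\ref{generic}; (openness of $\{\det\not\equiv 0\}$ is implicitly treated as the same easy continuity observation you make). You instead rebuild the argument from scratch, replacing the $d$-jet map and the non-degeneracy criterion of Proposition~\ref{non-degenerate:nctev} by the plain evaluation map $J(\Dmat)=(\Dmat(x_1),\dots,\Dmat(x_N))$ and the singular-matrix locus $\Sigma_N$. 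This is a legitimate simplification: the condition $\det(\Dmat(x))\not\equiv 0$ is detectable from finitely many point evaluations, whereas ``no constant eigenvalues'' is not (finitely many matrices $\Dmat(x_i)$ give no information about a common eigenvalue of \emph{all} $\Dmat(x)$), which is precisely why Theorem~\ref{generic} needed jets and the map $\mathscr{I}$. The trade-off is that the paper's route is shorter and exhibits the corollary as a direct consequence of the theorem just proved, while yours is more elementary and does not depend on the $\mathscr{I}$-machinery at all; both are valid, and both yield the same codimension count.
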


\begin{proof}
If $\det(\Dmat(x))\equiv 0$ then $0$ is a constant eigenvalue of $\Dmat(x)$.
\end{proof}

\medskip

\section{Uniform bounds on analytic functions}\label{bounds}
Consider the compact   {\em annulus} of width $2\,r > 0$ 
$$ \Ann_r=\{\, z\in\C\,:\, 1-r \leq \abs{z}\leq 1+r\,\} $$
and denote by $\mathscr{H}(\Ann_r)$ the Banach space of continuous functions $f:\Ann_r \to\C$
which are holomorphic over ${\rm int}(\Ann_r)$, endowed 
with the usual max norm  $\norm{f}_r=\max_{z\in \Ann_r} \abs{f(z)}$.
Let us fix some annulus $\Ann=\Ann_R$ 
 and introduce some measurements for non trivial holomorphic functions
in $\mathscr{H}(\Ann)$ over a compact 
sub-annulus $\Ann_\rho\subset \Ann$ where $\rho<R$.
Given a function  $f \in \mathscr{H}(\Ann)$, $f \not\equiv 0$, let  
$z_1$, $\ldots$ , $z_r$ be the zeros of $f(z)$ in ${\rm int}(\Ann)$, and $n_1$, $\ldots$, $n_r$ the corresponding multiplicities.
Set then
\begin{align*}
 N_\rho (f) &:= \sum_{\abs{\abs{z_j}-1}\leq \rho } n_j,\quad\quad  Z_\rho(f)(z)  := \prod_{\abs{\abs{z_j}-1}< \rho}  
 \left(\frac{z-z_j}{2 (R+1)}\right)^{n_k}\\
 g_\rho(f)(z)  &:= \frac{f(z)}{Z_{\rho}(f)(z)}, \qquad \beta_{\rho}(f)  := \min_{z\in \Ann_\rho}
\abs{g_{\frac{\rho+R}{2}}(f)(z)}\;.
\end{align*}
Note that since ${\rm diam}(\Ann)=2 (R+1)$, 
$\abs{Z_\rho(f)(z)}\leq 1$ for all $z\in \Ann_\rho$.
The following properties are also clear.

\begin{proposition} \label{N,beta:additivity} 
Given $f_1,f_2\in \mathscr{H}(\Ann)$, $f_1\,f_2 \not\equiv 0$
\begin{enumerate}
\item[(1)] $N_\rho(f_1\, f_2)=N_\rho(f_1) + N_\rho(f_2)$,
\item[(2)] $Z_\rho(f_1\, f_2)=Z_\rho(f_1)\,Z_\rho(f_2)$,
\item[(3)] $g_\rho(f_1\, f_2)=g_\rho(f_1)\,g_\rho(f_2)$,
\item[(4)] $\beta_\rho(f_1\, f_2)\geq \beta_\rho(f_1)\,\beta_\rho(f_2)$.
\end{enumerate}
\end{proposition}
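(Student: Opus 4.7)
The proposition is essentially bookkeeping about zeros and factorisations, so the plan is to reduce everything to the single elementary fact that the divisor of a product of holomorphic functions is the sum of the divisors of the factors.

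First I would prove (1) and (2) simultaneously. Given $f_1, f_2 \in \mathscr{H}(\Ann)$ with $f_1 f_2 \not\equiv 0$, each factor is nonzero. Writing the zero lists as $\{z_j^{(1)}\}$ with multiplicities $n_j^{(1)}$ and $\{z_j^{(2)}\}$ with multiplicities $n_j^{(2)}$, the zeros of $f_1 f_2$ (with multiplicity) are the concatenation of these two lists, with multiplicities adding at any shared zero. Restricting the index set to those $z_j$ in $\Ann_\rho$ (respectively $\mathrm{int}(\Ann_\rho)$) and summing multiplicities gives (1); taking the product of Blaschke-like factors $\left(\tfrac{z-z_j}{2(R+1)}\right)^{n_j}$ over the same index set gives (2).

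Next, (3) is immediate from (2): by definition $g_\rho(f) = f/Z_\rho(f)$, so
\begin{equation*}
g_\rho(f_1 f_2)(z) = \frac{f_1(z)\, f_2(z)}{Z_\rho(f_1)(z)\, Z_\rho(f_2)(z)} = g_\rho(f_1)(z)\, g_\rho(f_2)(z).
\end{equation*}

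For (4), I apply (3) with radius $\frac{\rho + R}{2}$ to get the pointwise identity $|g_{(\rho+R)/2}(f_1 f_2)(z)| = |g_{(\rho+R)/2}(f_1)(z)| \cdot |g_{(\rho+R)/2}(f_2)(z)|$ on $\Ann_\rho$, then use the elementary fact that for any two nonnegative functions $h_1, h_2$ on a common domain one has $\min (h_1 h_2) \ge (\min h_1)(\min h_2)$, since at every point $z$ one can bound each factor below by its minimum. Taking the minimum over $\Ann_\rho$ of both sides yields $\beta_\rho(f_1 f_2) \ge \beta_\rho(f_1)\, \beta_\rho(f_2)$.

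There is no real obstacle here; the entire proposition follows from the additivity of zero-order under multiplication plus a one-line submultiplicativity inequality for minima. The only mild point to flag is the convention mismatch between the closed inequality $|\,|z_j|-1|\le \rho$ in the definition of $N_\rho$ and the strict one in $Z_\rho$, but this affects neither the additivity argument (the bookkeeping is identical on any index subset) nor the final inequality in (4).
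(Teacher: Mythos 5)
Your argument is correct and is exactly the elementary bookkeeping the paper has in mind (the paper omits the proof entirely, stating only that the properties are clear): additivity of divisors gives (1)--(3), and the submultiplicativity of the minimum gives (4). Your flag about the $\le \rho$ versus $<\rho$ convention mismatch is also apt and correctly dismissed as harmless.
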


\begin{proposition} \label{N,beta:continuity} 
Given $\rho<R$,
\begin{enumerate}
 \item[(1)]  $ \mathscr{H}(\Ann) \setminus \{0\}  \ni f \mapsto N_{\rho} (f) \in \N$  
is upper semicontinuous;
 \item[(2)]  $\mathscr{H}(\Ann) \setminus\{0\} \ni f \mapsto  \beta_{\rho} (f) \in (0, \infty)$ is lower semicontinuous.
\end{enumerate}
\end{proposition}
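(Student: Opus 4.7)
The approach combines Hurwitz's theorem---the stability of zero multisets of holomorphic functions under uniform convergence---with the specific form of the measurements $N_\rho$ and $\beta_\rho$, in particular with the diameter normalization $2(R+1)$ built into $Z_\rho$. Fix $f_0 \in \mathscr{H}(\Ann) \setminus \{0\}$ and let $f_n \to f_0$ in $\mathscr{H}(\Ann)$. Since $f_0 \not\equiv 0$ and is holomorphic on ${\rm int}(\Ann)$, its zeros in any compact subset of ${\rm int}(\Ann)$ form a finite multiset.

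For part (1), the plan is to choose a radius $\rho' \in (\rho, R)$ slightly larger than $\rho$ with two properties: the annular region $\Ann_{\rho'} \setminus \Ann_\rho$ contains no zeros of $f_0$, and $f_0$ has no zeros on $\partial \Ann_{\rho'}$. Both can be arranged simultaneously because the set of moduli of the finitely many zeros of $f_0$ in $\Ann_{(\rho+R)/2}$ is finite. Applying Hurwitz to the compact set $\Ann_{\rho'}$ yields, for all $n$ large, that $f_n$ and $f_0$ have the same number of zeros in $\Ann_{\rho'}$, namely $N_\rho(f_0)$. Since $\Ann_\rho \subset \Ann_{\rho'}$, this gives $N_\rho(f_n) \le N_\rho(f_0)$ for $n$ large.

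For part (2), set $\tau := (\rho+R)/2$. Partition the zeros of $f_0$ in the closed sub-annulus $\overline{\Ann_\tau}$ into the strict interior zeros $z_1, \ldots, z_s$ (those with $|\,|z_i|-1| < \tau$, the ones that enter $Z_\tau(f_0)$) and the boundary zeros $w_1, \ldots, w_t$ (those on $\partial \Ann_\tau$, which do not). Take small disjoint neighborhoods $U_i$ of each $z_i$ and $V_j$ of each $w_j$. By Hurwitz, for $n$ large the interior zeros perturb to zeros of $f_n$ that remain strictly inside $\Ann_\tau$ with matching multiplicities, while each boundary zero $w_j$ of multiplicity $k_j$ is replaced by $k_j$ nearby zeros of $f_n$, some of which may lie just inside $\Ann_\tau$ and thus enter $Z_\tau(f_n)$. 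Accordingly, factor $Z_\tau(f_n) = \tilde{Z}_n \cdot B_n$, where $\tilde{Z}_n$ collects the factors coming from perturbed interior zeros (same total degree as $Z_\tau(f_0)$) and $B_n$ collects the extra factors coming from boundary zeros that have migrated inside $\Ann_\tau$.

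The crux is the normalization in the definition of $Z_\rho$: each factor has the form $(z-w)/(2(R+1))$, and since $2(R+1)$ is the diameter of $\Ann$, every such factor has modulus at most $1$ on $\Ann$. Hence $|B_n(z)| \le 1$ on $\Ann_\rho$, giving
\[
\abs{g_\tau(f_n)(z)} = \frac{\abs{f_n(z)}}{\abs{\tilde{Z}_n(z)}\,\abs{B_n(z)}} \ge \frac{\abs{f_n(z)}}{\abs{\tilde{Z}_n(z)}}.
\]
Since the roots of $\tilde{Z}_n$ (with multiplicities) converge to those of $Z_\tau(f_0)$, we have $\tilde{Z}_n \to Z_\tau(f_0)$ uniformly on $\Ann$, and the matched zeros of $f_n$ and $\tilde{Z}_n$ cancel to yield $f_n/\tilde{Z}_n \to f_0/Z_\tau(f_0) = g_\tau(f_0)$ uniformly on $\Ann_\rho$. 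This produces $\liminf_n \beta_\rho(f_n) \ge \beta_\rho(f_0)$. The main obstacle is precisely the possible discontinuity of $f \mapsto Z_\tau(f)$ when $f_0$ has zeros on $\partial \Ann_\tau$; the $2(R+1)$ normalization is engineered so that this discontinuity can only enlarge $|g_\tau|$, which is exactly the one-sided bound needed for lower semicontinuity.
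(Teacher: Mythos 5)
Your part (1) mirrors the paper's argument: the paper applies the argument principle on $\partial\Ann_{\rho_1}$ for a slightly larger radius $\rho_1 > \rho$ with no zeros of $f_0$ in $\Ann_{\rho_1}\setminus\Ann_\rho$; invoking Hurwitz's theorem on $\Ann_{\rho'}$ is the same idea in different clothing. For part (2) your decomposition is genuinely different from the paper's, though it rests on the same key observation. The paper shrinks the comparison radius from $r = (\rho+R)/2$ to a nearby $r_0 < r$ chosen so that $f_0$ has no zeros in the shell $r_0 \le \abs{\abs{z}-1} < r$, and compares $\beta_\rho$ with the continuous auxiliary functional $\eta(f) = \min_{\Ann_\rho}\abs{g_{r_0}(f)}$: one has $\eta(f) \le \beta_\rho(f)$ because the factors distinguishing $g_{r_0}$ from $g_r$ have modulus $\le 1$ thanks to the $2(R+1)$ normalization, with equality at $f_0$. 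Your version fixes the radius $\tau = r$ and splits $Z_\tau(f_n) = \tilde{Z}_n B_n$, with $\abs{B_n}\le 1$ doing the same job as the paper's ``extra factors $\le 1$''; where the paper excludes the dangerous boundary-zone zeros by shrinking the radius, you corral them into $B_n$ and discard them. The two are dual and both valid.

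The place your write-up leaves a gap is the clause ``the matched zeros of $f_n$ and $\tilde{Z}_n$ cancel to yield $f_n/\tilde{Z}_n \to g_\tau(f_0)$ uniformly on $\Ann_\rho$.'' Uniform convergence of $f_n$ and of $\tilde{Z}_n$ separately does not by itself give uniform convergence of the quotient near the common zeros; even with zeros matched, one must propagate the estimate inward from a region where the denominators stay bounded away from zero. The paper does exactly this with Cauchy's integral formula on $\partial\Ann_{r_0}$, where $f_0$ is nonvanishing. You need the analogous step: choose $\sigma\in(\rho,\tau)$ with $\max_i\abs{\abs{z_i}-1} < \sigma$ and $f_0$ nonvanishing on $\partial\Ann_\sigma$; then $f_n/\tilde{Z}_n\to g_\tau(f_0)$ uniformly on $\partial\Ann_\sigma$ (both numerator and denominator are bounded away from zero there), and Cauchy's formula on $\Ann_\sigma$ extends the convergence to $\Ann_\rho$. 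Also worth stating: for $n$ large no zero of $f_n$ inside $\Ann_\tau$ can originate from a zero of $f_0$ lying strictly \emph{outside} $\overline{\Ann_\tau}$, so $\tilde{Z}_n B_n$ really does exhaust $Z_\tau(f_n)$; this is a routine consequence of Hurwitz and the finiteness of the zero set, but it is implicit in your partition and should be made explicit.
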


\begin{proof}
We begin with part (1).
Fix $f_0\in\mathscr{H}(\Ann)$, $f_0 \not\equiv0$ and take $\rho_1>\rho$ 
sufficiently close to $\rho$ so that $f_0(z)$ has no zeros in $\Ann_{\rho_1}\setminus \Ann_{\rho}$. Note that $f_0(z)$ may have zeros in $\partial \Ann_\rho$.
Then there is some neighborhood $\mathscr{U}$ of $f_0$
such that for every $f\in\mathscr{U}$, $f(z)$ has no zeros in $\partial \Ann_{\rho_1}$.
By the Argument Principle, for every $f\in\mathscr{U}$ 
$$ N_{\rho_1} (f ) =\frac{1}{2\pi i}\, \int_{\partial \Ann_{\rho_1}}
\frac{f'(z)}{f(z)}\, dz \; , $$
hence $f\mapsto  N_{\rho_1} (f)$ is continuous over $\mathscr{U}$.
Since $ N_{\rho_1} (f)\geq  N_{\rho} (f)$  and $ N_{\rho_1} (f_0) = N_{\rho} (f_0)$,
it follows that $f\mapsto  N_{\rho} (f)$ is upper semicontinuous at $f_0$.

We now turn to prove part (2).
Fix $f_0\in\mathscr{H}(\Ann)$, $f\not\equiv0$ and take $\rho_0<\rho$ 
sufficiently close to $\rho$ so that $f_0(z)$ has no zeros in ${\rm int}(\Ann_{r})\setminus {\rm int}(\Ann_{r_0})$,
where $r_0= \frac{\rho_0+R}{2}$ and $r= \frac{\rho+R}{2}$.
Note that $f_0(z)$ may have zeros in $\partial \Ann_{r}$ but not in $\partial \Ann_{r_0}$.
Hence there is some neighborhood $\mathscr{U}$ of $f_0$
such that for every $f\in\mathscr{U}$, $f(z)$ has no zeros in $\partial \Ann_{r_0}$.
Thus, denoting by $z_j=z_j(f)$ the zeros of $f(z)$
such that $r_0 <\abs{\abs{z_j}-1}< r$,
and by $n_j=n_j(f)$ their respective multiplicities, we have
$$ \abs{ g_{r_0}(f)(z) }
= \prod_{j} \abs{\frac{z-z_j}{2 (R+1)}}^{n_j} \cdot \abs{ g_{r}(f)(z) }\leq \abs{ g_{r}(f)(z) }\;,
 $$
hence
$$\eta(f):= \min_{z\in\Ann_\rho} \abs{ g_{r_0}(f)(z) }
\leq \min_{z\in\Ann_\rho} \abs{ g_{r}(f)(z) }
 = \beta_\rho(f) \;.$$
Note also that $\eta(f_0)=\beta_{\rho}(f_0)$.
Therefore, the lower semi-continuity of $\beta_\rho$ at $f_0$,
follows because $\eta:\mathscr{U}\to\R$ is continuous.
This last continuity 
relies on the fact that  functions $f\in\mathscr{U}$
are always non zero near $\partial \Ann_{r_0}$.
To be more precise, the  divisor of $f(z)$ on $\Ann_{r_0}$ is a formal linear combination, ${\rm div}_{r_0}(f)=\sum_{k=1}^r n_k\, z_k$, of the zeros of $f(z)$ in $\Ann_{r_0}$,
using their multiplicities as coefficients. We identify divisors 
with finite combinations of point mass measures, and  topologize them with the weak-$\ast$ topology. The argument principle implies that
$f\mapsto {\rm div}_{r_0} (f)$ is continuous on $\mathscr{U}$. 
If $\mu$ denotes the measure
associated with the divisor ${\rm div}_{r_0}(f)$, the polynomial 
$Z_{r_0}(f)(z)$ can be expressed as the following integral    
$$ Z_{r_0}(f)(z)=\exp\left\{ \int_\C  \log\left( \frac{z-w}{2 (R+1)} \right)\,d\mu(w) \right\}\;,$$
where `$\log$' denotes any branch of the logarithm function that contains
the zeros $z_k$ of $f(z)$ in $\Ann_{r_0}$.
This shows that the map $\mu\to Z_{r_0}(f)$ is continuous
on $\mathscr{U}$, and hence so is
$f\mapsto Z_{r_0}(f)$.
Let us now prove that  $\mathscr{U}\ni f\mapsto g_{\Ann_{r_0}}(f)\in\mathscr{H}(\Ann_{r_0})$ is continuous.
Because $Z_{r_0}(f)(z)$ does not vanish on $\partial \Ann_{r_0}$, the quotient
$g_{r_0}(f) =f/Z_{r_0}(f)$ depends continuously on $f$ w.r.t. the norm $\norm{f}_{\partial \Ann_{r_0}}$. Then Cauchy's integral formula
$$ g_{r_0}(f)(z) = \frac{1}{2\pi i}\,\int_{\partial \Ann_{r_0} } \frac{g_{{r_0}}(f)(\zeta)}{\zeta-z}\, d\zeta\;,$$
shows that $f\mapsto g_{\Ann_{r_0}}(f)$ is continuous on $\mathscr{U}$,
hence so is the function $\eta:\mathscr{U}\to \R$  defined by
$\eta(f)=\min_{z\in\Ann_\rho} \abs{ g_{r_0}(f)(z)} $.

\end{proof}

\begin{proposition} \label{lower:bound:f(z)}  Given $0<\delta <\rho$, $N > 0$, $\beta > 0$, there is some 
$\ep_0 = \ep_0 (\rho, \delta, N, \beta) > 0$
such that given two concentric  annuli $\Ann' \subset \Ann_\rho$, $\Ann'$ of width $2\delta$, and given any function $f(z)$ holomorphic over $\Ann_\rho$ which satisfies 
$N_{\rho} (f) \leq N$ and $\beta_{\rho} (f) > \beta$, there is at least one circle
$\Circ \subseteq {\rm int}(\Ann')$ such that $\abs{f (z)}\geq  \ep_0$ for every $z \in \Circ$.
\end{proposition}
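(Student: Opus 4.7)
My plan is to exploit the factorization built into the definition of $\beta_\rho(f)$ and then to perform a pigeonhole argument on the radius of a circle in $\Ann'$, choosing that radius so as to avoid the radial projections of the zeros of $f$.

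The first step is to rewrite the hypothesis $\beta_\rho(f)>\beta$ as the pointwise inequality $|f(z)| > \beta\,|Z_{(\rho+R)/2}(f)(z)|$ for every $z\in\Ann_\rho$, obtained by unfolding $g_{(\rho+R)/2}(f)(z) = f(z)/Z_{(\rho+R)/2}(f)(z)$. The problem thus reduces to producing a circle $\Circ\subseteq{\rm int}(\Ann')$ on which $|Z_{(\rho+R)/2}(f)(z)|$ admits a lower bound $\epsilon_1$ depending only on $\rho,\delta,N$; then $\epsilon_0:=\beta\,\epsilon_1$ finishes the argument.

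Next I would parameterize candidate circles by their radius $r$, which varies over an interval of length $2\delta$ inside $[1-\rho,1+\rho]$. For each zero $z_j$ of $Z_{(\rho+R)/2}(f)$ and each $z$ with $|z|=r$, the crude bound $|z-z_j|\geq \bigl||z_j|-r\bigr|$ shows that the radii for which the circle lies within distance $\eta$ of $z_j$ form an interval of length at most $2\eta$ around $|z_j|$. Since the zeros of $f$ in $\Ann_\rho$ have total multiplicity at most $N$, the union of these bad intervals, weighted by multiplicities, has Lebesgue measure at most $2N\eta$. Setting $\eta:=\delta/(2(N+1))$ makes this strictly less than $\delta<2\delta$, so a positive-measure set of good radii survives. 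On any such radius every factor $|(z-z_j)/(2(R+1))|$ arising from a zero inside $\Ann_\rho$ is bounded below by $\eta/(2(R+1))$, yielding a contribution of at least $(\eta/(2(R+1)))^{N}$ to $|Z_{(\rho+R)/2}(f)(z)|$ along the whole circle.

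The hard part will be handling the zeros of $f$ that fall outside $\Ann_\rho$ but still inside $\Ann_{(\rho+R)/2}$: their total multiplicity is not controlled by $N_\rho(f)\leq N$, so a priori the radial pigeonhole above could be spoiled by too many such outer zeros. I would address this either by sharpening the pigeonhole with Cartan's estimate applied to the polynomial factor coming from the outer zeros, whose sublevel set can be covered by disks of total radius $\leq 2eH$ with $H$ arbitrarily small, so that the associated set of bad radii has measure $\leq 4eH$ independent of the degree, or, in the applications for which this proposition is ultimately used, by exploiting a uniform sup norm bound on the underlying cocycle block, which limits the number of outer zeros via a Jensen-type estimate. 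Either way the outer contribution is absorbed into a positive constant $\epsilon_1=\epsilon_1(\rho,\delta,N)$ that is independent of $f$, and $\epsilon_0:=\beta\,\epsilon_1$ gives the desired uniform lower bound $|f(z)|\geq\epsilon_0$ on the selected circle.
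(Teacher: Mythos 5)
Your core argument — unfold the definition of $\beta_\rho(f)$ into a factorization $f = g\cdot Z$, then pigeonhole on radii so that the chosen circle stays distance $\eta$ from all the relevant zeros, giving $|Z|\geq(\eta/(2(R+1)))^{N}$ there — is exactly the paper's argument. The paper uses $\eta_0=\delta/N$ (so $N\eta_0=\delta<2\delta$, the width of $\Ann'$) rather than your $\delta/(2(N+1))$, but that is just a matter of the constant. Up to that point you and the paper coincide.

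Where you diverge is in which level of the factorization you use, and this is where your attempted patch goes wrong. You factor at the level $(\rho+R)/2$, i.e. $f=g_{(\rho+R)/2}(f)\cdot Z_{(\rho+R)/2}(f)$, which lets you invoke $|g_{(\rho+R)/2}(f)|\geq\beta$ directly from the hypothesis $\beta_\rho(f)>\beta$, but then forces you to lower-bound $Z_{(\rho+R)/2}(f)$, whose number of linear factors is \emph{not} controlled by $N_\rho(f)\leq N$. The paper instead writes $|f(z)| = |g_\rho(f)(z)|\cdot |Z_\rho(f)(z)|$, so that $Z_\rho(f)$ has exactly $N_\rho(f)\leq N$ normalized linear factors and the pigeonhole gives a uniform lower bound $(\eta_0/(2(R+1)))^N$; the constant $R$ is then taken ``arbitrarily close to $\rho$'', which is tacitly meant to ensure $g_\rho(f)=g_{(\rho+R)/2}(f)$ so that the hypothesis $|g_{(\rho+R)/2}(f)|\geq\beta$ can be read as $|g_\rho(f)|\geq\beta$. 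This is the paper's way of neutralizing the ``extra'' zeros; it is appropriate for the functions actually encountered in the paper (which extend past $\Ann_\rho$), though it is written tersely.

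Your own proposed repair via Cartan's lemma does not close the gap you correctly identified. Cartan gives, outside disks of total radius $\leq 2H$, the lower bound $(H/e)^M$ on a monic polynomial of degree $M$; when applied to the normalized polynomial formed from the outer zeros, the degree $M$ is precisely the uncontrolled quantity, so the resulting bound $(H/(2e(R+1)))^M$ degenerates to $0$ as $M$ grows. The measure of the bad set is indeed degree-independent, but the lower bound on the good set is not, so you do not get an $\epsilon_0$ depending only on $(\rho,\delta,N,\beta)$. Either you must follow the paper and work with $g_\rho,\,Z_\rho$ directly (so the degree of the polynomial you pigeonhole against is $\leq N$), or, if you insist on working at level $(\rho+R)/2$, you need an a priori bound on the number of zeros in $\Ann_{(\rho+R)/2}$ — which the bare hypothesis $N_\rho(f)\leq N$ does not provide.
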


\begin{proof}
Take $R>\rho$, arbitrary close to $\rho>0$ and set
 $\ep_0 = \ep_0 (\rho, \delta, N, \beta):= \beta \,\left(\frac{\delta}{2 (R+1)\,N}\right)^N = \beta\, \left(\frac{\eta_0}{2 (R+1)}\right)^N \sim \beta \,\left(\frac{\delta}{2 (\rho+1)\,N}\right)^N$ , where $\eta_0 = \frac{\delta}{N}$.  Let $z_1, \ldots , z_r$ be the zeros in $\Ann_\rho$ of some function $f	\in	\mathscr{H}(\Ann_R)$ 
with multiplicities $n_1, \ldots , n_r$. 
Then  $n_1 + \ldots + n_r=N_\rho(f)\leq N$, and,
because $N \, \eta_0= \delta <2\,\delta$, the width of $\Ann'$,  there is at least one
circle $\Circ$ concentric with $\Ann_\rho$ which does not intersect any of the
disks $D_{\eta_0} (z_k )$. Hence, for every
$z \in \Ann_\rho \setminus \cup_{i=1}^ r D_{\eta_0} (z_k )$, and in particular for every $z \in \Circ$, we have 
$\abs{f(z)}=\abs{g_{\rho}(f)(z)}\,\prod_{i=1}^r \abs{\frac{z-z_i}{2 (R+1)}}^{n_i}\geq \beta\, \left(\frac{\eta_0}{2 (R+1)}\right)^N = \ep_0$.
\end{proof}

\begin{remark} \rm{As observed in the proof, the constant $\ep_0>0$ in
proposition~\ref{lower:bound:f(z)} is explicitely given by
\begin{equation}\label{ep0-formula}
\ep_0 (\rho, \delta, N, \beta) \sim \beta \,\left(\frac{\delta}{2 (\rho+1)\,N}\right)^N
\end{equation}}
\end{remark}

\bigskip

For any $V\in C^\omega_\rho(\T,\Sym_d(\R))$ and $E \in \R$, we define the function 
$\displaystyle f_E (V) :\Ann_\rho \to\C$ by 
$$f_E(V) (z) := \det(V(z)-E \cdot I)$$
Clearly $f_E (V) \in \mathscr{H} (\Ann_\rho)$. Moreover,
 if $V$ has no constant eigenvalues,  then all functions $f_E(V)$ are non trivial,
i.e. $f_E(V)\not{\!\!\equiv} 0$, hence we have $N_{\rho}(f_E(V))<\infty$ and $\beta_{\rho}(f_E(V))>0$.

Since $\R \ni E \mapsto f_E (V) \in  \mathscr{H} (\Ann_\rho)$ is continuous  for any fixed potential $V\in\V$, from proposition~\ref{N,beta:continuity} it follows that the map 
$$\displaystyle \R \ni E \to N_{\rho}(f_E(V)) \in \N$$ 
is upper semi-continuous, while the map
$$\displaystyle \R \ni E \mapsto \beta_{\rho}(f_E(V)) \in (0, \infty)$$
is lower semi-continuous.

This shows that their maximum and respectively minimum values are attained on a compact interval of energies $E$. Since clearly for large enough values of $E$ the functions $f_E (V) (z)$ have no zeros, we can then take $E$ over the whole set of reals and define the following uniform in $E$ measurements on the potential function $V$.

\begin{definition} \label{hat:functions}
For any  $V \in \V$, the open set of potential functions in $ C^\omega_\rho(\T,\Sym_d(\R))$ with no constant eigenvalues, let   
$$\widehat{N}_\rho(V):=\max_{ E\in\R} N_{\rho}(f_E(V)) < \infty$$
$$\widehat{\beta}_\rho(V):=\min_{E\in\R} \beta_{\rho}(f_E(V)) > 0$$
\end{definition}

\begin{proposition}\label{continuity_hats} The two maps defined above satisfy the following:
\begin{enumerate}
\item[(1)] $\V \ni V\mapsto \widehat{N}_\rho(V) \in \N$
is upper semi-continuous;

\item[(2)] $\V \ni V\mapsto \widehat{\beta}_\rho(V) \in (0, \infty)$
is lower semi-continuous.
\end{enumerate}

\end{proposition}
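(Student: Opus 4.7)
The plan is to reduce the supremum and infimum over the non-compact parameter set $\R$ to an optimization over a compact interval of energies uniform for $V$ ranging over a small neighborhood of any fixed $V_0\in \V$, and then to invoke Proposition~\ref{N,beta:continuity} together with the joint continuity of $(V,E)\mapsto f_E(V)$.

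First I would fix $V_0\in \V$ and, using the openness of $\V$ from Theorem~\ref{generic}, choose a bounded neighborhood $\mathscr{U}\subset \V$ of $V_0$ with $\norm{V}_\rho\leq B$ for all $V\in \mathscr{U}$.  A Neumann series argument then shows that whenever $\abs{E}>2B$ and $z\in \Ann_\rho$, the matrix $V(z)-E\cdot I$ is invertible, with the explicit lower bound $\abs{f_E(V)(z)}=\abs{\det(V(z)-E\cdot I)}\geq (\abs{E}-B)^d$.  Consequently $N_\rho(f_E(V))=0$ for all such $E$, and $\beta_\rho(f_E(V))\to \infty$ as $\abs{E}\to \infty$, uniformly for $V\in \mathscr{U}$.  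I would then pick $M$ so large that, for every $V\in \mathscr{U}$,
\[\widehat{N}_\rho(V)=\max_{\abs{E}\leq M} N_\rho(f_E(V))\quad \text{and}\quad \widehat{\beta}_\rho(V)=\min_{\abs{E}\leq M} \beta_\rho(f_E(V)).\]

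Next, the map $(V,E)\mapsto f_E(V)\in \mathscr{H}(\Ann_\rho)$ is jointly continuous in the sup norm, since $\norm{f_E(V)-f_{E'}(V')}_\rho$ is easily controlled by $\norm{V-V'}_\rho$ and $\abs{E-E'}$ via the multilinearity of $\det$.  Composing with Proposition~\ref{N,beta:continuity} gives that $(V,E)\mapsto N_\rho(f_E(V))$ is jointly upper semi-continuous on $\mathscr{U}\times [-M,M]$ and that $(V,E)\mapsto \beta_\rho(f_E(V))$ is jointly lower semi-continuous there.  The result then follows by the standard compactness argument: given a sequence $V_n\to V_0$ in $\mathscr{U}$, pick maximizers $E_n\in [-M,M]$ for $\widehat{N}_\rho(V_n)$, extract a convergent subsequence $E_{n_k}\to E_\infty\in [-M,M]$, and use joint upper semi-continuity at $(V_0,E_\infty)$ to conclude
\[\limsup_k \widehat{N}_\rho(V_{n_k})\leq N_\rho(f_{E_\infty}(V_0))\leq \widehat{N}_\rho(V_0);\]
the argument for $\widehat{\beta}_\rho$ is identical after swapping $\max/\limsup$ for $\min/\liminf$ and reversing inequalities.

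The only real obstacle is the uniform compactness reduction in the first step: one must exclude the possibility that the optimizing energy for $\widehat{N}_\rho(V)$ or $\widehat{\beta}_\rho(V)$ escapes to infinity as $V$ varies, which would break the compactness-plus-semicontinuity argument.  Once the large-$\abs{E}$ Neumann series estimate is in place, it rules out such energies uniformly over $\mathscr{U}$, and the rest is a routine application of the semi-continuities already established in Proposition~\ref{N,beta:continuity}.
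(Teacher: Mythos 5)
Your proof is correct and follows essentially the same route as the paper: reduce the optimization over $E\in\R$ to a compact interval using a large-$\abs{E}$ bound on $\det(V(z)-E\cdot I)$ that is uniform over a neighborhood of $V_0$, then combine the joint continuity of $(V,E)\mapsto f_E(V)$ with Proposition~\ref{N,beta:continuity} and compactness of that interval. The only cosmetic difference is that you run the compactness step via extracted sequences of maximizers/minimizers, whereas the paper does it with a finite subcover of $I$ by intervals $J_E$ and an intersection of the corresponding neighborhoods $\mathscr{U}_E$; the two are interchangeable. One small point worth spelling out if you flesh this in: for $\widehat{\beta}_\rho$ you must justify that the minimum over $\R$ is attained on $[-M,M]$; this follows from your observation that $\beta_\rho(f_E(V))\to\infty$ uniformly on $\mathscr{U}$ together with the fact that for some fixed $E_0$ (e.g.\ $\abs{E_0}=B+1$) one has the uniform upper bound $\beta_\rho(f_{E_0}(V))\leq(\abs{E_0}+B)^d$, so one can choose $M$ exceeding both $\abs{E_0}$ and the point beyond which $(\abs{E}-B)^d$ exceeds that bound.
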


\begin{proof}
Since the map $\R\times \mathscr{V}\to \mathscr{H}(\Ann)$, $(E,V) \mapsto f_E(V)$,  is continuous,
by proposition~\ref{N,beta:continuity} the functions
$\widetilde{N},\widetilde{\beta}:\R\times \mathscr{V}\to \R$ defined by
$\widetilde{N}(E,V):= N_{\rho}(f_E(V))$ and $\widetilde{\beta}(E,V):=  \beta_{\rho}(f_E(V))$
are respectively upper and lower semi-continuous. Fix some $\ep>0$.
Given $V_0\in\mathscr{V}$, consider the compact interval $I=[-2\norm{V}_\rho,2\norm{V}_\rho]$,
and take a neighbourhood $\mathscr{U}_0\subseteq \mathscr{V}$ of $V_0$
such that for $V\in\mathscr{U}_0$,  $E\notin I$, and $z\in\strip_\rho$,
$\abs{f_E(V)(z)}=\abs{\det (V(z)-E)}\geq 1$. This shows that for $V\in\mathscr{U}_0$
and $E\notin I$, $N_\rho(f_E(V))=0$ and $\beta_\rho(f_E(V))\geq 1$, thus proving that
$\widehat{N}_\rho(V)=\sup_{ E\in I} N_{\rho}(f_E(V))$
and
$\widehat{\beta}_\rho(V)=\sup_{ E\in I} N_{\beta}(f_E(V))$.
Because $\widetilde{N}$ and $\widetilde{\beta}$ are semi-continuous, 
for each $E\in I$ we can take an open interval $J_E\subseteq \R$,
with $E\in J_E$, and a neighbourhood $\mathscr{U}_E\subseteq \mathscr{V}$ of $V_0$
such that for every $(E',V')\in J_E\times\mathscr{U}_E$,
\begin{align*}
N_\rho(f_{E'}(V')) &\leq N_\rho(f_{E}(V_0))\leq \widehat{N}_\rho(V_0),\; \text{ and } \\
\beta_\rho(f_{E'}(V')) &\geq \beta_\rho(f_{E}(V_0)) - \ep \geq \widehat{\beta}_\rho(V_0) -\ep\;.
\end{align*}
Since $I$ is compact there are energies $E_1,\ldots, E_\ell\in I$ such that
$I\subseteq \cup_{j=1}^\ell J_{E_j}$. Hence, setting
$\mathscr{U}=\mathscr{U}_0\cap \cap_{j=1}^ \ell \mathscr{U}_j$,
we have for every $V\in\mathscr{U}$  and every $E\in I$, with $E\in J_{E_j}$,
\begin{align*}
N_\rho(f_{E}(V)) &\leq N_\rho(f_{E_j}(V_0))\leq \widehat{N}_\rho(V_0),\; \text{ and } \\
\beta_\rho(f_{E}(V)) &\geq \beta_\rho(f_{E_j}(V_0)) - \ep \geq \widehat{\beta}_\rho(V_0) -\ep\;.
\end{align*}
Thus, for every $V\in\mathscr{U}$ 
$$\widehat{N}_\rho(V)\leq \widehat{N}_\rho(V_0) 
\; \text{ and }\;
\widehat{\beta}_\rho(V) \geq \widehat{\beta}_\rho(V_0) -\ep \;, $$
which proves that $\widehat{N}_\rho$ is upper semi-continuous and
$\widehat{\beta}_\rho$ is lower semi-continuous.
\end{proof}

For holomorphic matrix-valued functions $\Dmat:\Ann_\rho\to\Mat_d(\C)$
we use the notation:
$$ N_\rho(\Dmat):= N_\rho( \det \Dmat ) \ \text{ and } \
\beta_\rho(\Dmat):= \beta_\rho( \det \Dmat ) $$

\begin{corollary}\label{U:bounds}Given $0<\delta <\rho$, $N \in \N$, $\beta > 0$, there is some
$\ep_0 = \ep_0 (\rho, \delta, N, \beta) > 0$
such that for any annulus $\Ann' \subset \Ann_\rho$
of width $\delta$, and for any function $\Dmat \in C_\rho^\omega(\T,\Mat_d)$
 with 
$N_\rho(\Dmat) \leq N$ and $\beta_\rho(\Dmat) > \beta$,  
 there is a circle
$\Circ \subseteq {\rm int}(\Ann')$ such that:
\begin{equation}\label{detlbU}
\abs{\det [ \Dmat(z) ] }\geq  \ep_0 \quad \text{ for all } z \in \Circ
\end{equation}
\end{corollary}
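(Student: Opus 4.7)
The plan is straightforward: reduce the corollary to the scalar version, Proposition~\ref{lower:bound:f(z)}, by passing to the determinant. Set $f(z):=\det[\Dmat(z)]$. Since $\Dmat \in C^\omega_\rho(\T,\Mat_d)$ extends holomorphically to $\Ann_\rho$ and the determinant is a polynomial in the matrix entries, $f\in\mathscr{H}(\Ann_\rho)$. By the definitions $N_\rho(\Dmat):=N_\rho(\det\Dmat)$ and $\beta_\rho(\Dmat):=\beta_\rho(\det\Dmat)$ introduced immediately before the statement, the hypotheses on $\Dmat$ translate verbatim into $N_\rho(f)\le N$ and $\beta_\rho(f)>\beta$, so $f$ is a scalar function meeting the assumptions of Proposition~\ref{lower:bound:f(z)}.

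Proposition~\ref{lower:bound:f(z)} is formulated for sub-annuli of width $2\delta$, while here $\Ann'$ has width $\delta$; to account for this, I would invoke the proposition with $\delta/2$ in place of $\delta$. It then produces a constant $\ep_0 = \ep_0(\rho,\delta/2,N,\beta)>0$, and, applied to the given $\Ann' \subset \Ann_\rho$, yields a circle $\Circ \subseteq {\rm int}(\Ann')$ on which $\abs{f(z)} \ge \ep_0$. This is exactly the desired estimate $\abs{\det[\Dmat(z)]} \ge \ep_0$ on $\Circ$, with $\ep_0$ depending only on $\rho,\delta,N,\beta$ and not on the particular $\Dmat$ or $\Ann'$.

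There is no real obstacle: the definitions of $N_\rho$ and $\beta_\rho$ for matrix-valued holomorphic functions were tailored precisely so that this reduction is a tautology, and the only bookkeeping is the factor-of-two discrepancy between the two width conventions, which is absorbed harmlessly into the $\delta$-dependence of $\ep_0$ (indeed the explicit formula in \eqref{ep0-formula} shows the scaling is polynomial in $\delta$).
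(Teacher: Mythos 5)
Your proof is correct and is essentially the paper's own argument: the paper likewise just applies Proposition~\ref{lower:bound:f(z)} to $f(z)=\det[\Dmat(z)]$, using that $N_\rho(\Dmat)$ and $\beta_\rho(\Dmat)$ are by definition the scalar measurements of $\det\Dmat$. Your extra care with the width convention (invoking the proposition with $\delta/2$) is a harmless bookkeeping refinement that the paper glosses over.
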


\begin{proof} Simply apply Proposition~\ref{lower:bound:f(z)} to the holomorphic function $f (z) := \det ( U(z) )$ on $\Ann_\rho$.
\end{proof}

\begin{corollary}\label{V:bounds}  Given $0<\delta <\rho$, $N_1, N_2 \in \N$, $\beta_1, \beta_2 > 0$, there is some
$\ep_0 = \ep_0 (\rho, \delta, N_1, N_2, \beta_1, \beta_2) > 0$
such that for any $E\in\R$, any annulus $\Ann' \subset \Ann_\rho$
of width $\delta$, and any functions $\Dmat \in C_\rho^\omega(\T,\Mat_d)$,
 $V \in C_\rho^\omega(\T,\Sym_d)$ with 
$N_\rho(\Dmat) \le N_1$, $\widehat{N}_\rho (V ) \leq N_2$ and $\beta_\rho(\Dmat) > \beta_1$, $\widehat{\beta}_\rho (V) > \beta_2$,  
 there is a circle
$\Circ \subseteq {\rm int}(\Ann')$ such that:
\begin{equation}\label{detlbUV}
 \abs{\det [ \Dmat (z) \cdot (V(z)-E \cdot I) ] }\geq  \ep_0 \quad \text{ for all } z \in \Circ
 \end{equation}
\end{corollary}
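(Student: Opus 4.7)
The plan is to reduce this corollary to a direct application of Proposition~\ref{lower:bound:f(z)}, exactly in the same spirit as the proof of Corollary~\ref{U:bounds}, but now applied to the scalar function
\[
f(z) := \det\bigl[ \Dmat(z)\,(V(z)-E\cdot I)\bigr] = \det \Dmat(z)\cdot \det(V(z)-E\cdot I) = \det \Dmat(z)\cdot f_E(V)(z),
\]
which lies in $\mathscr{H}(\Ann_\rho)$. The task then is to produce uniform bounds on $N_\rho(f)$ and $\beta_\rho(f)$ depending only on the data $\rho, B, N_1, N_2, \beta_1, \beta_2$, after which Proposition~\ref{lower:bound:f(z)} immediately supplies the desired circle $\Circ\subseteq\mathrm{int}(\Ann')$ and lower bound $\ep_0$.

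The key step is to combine the hypotheses using the multiplicativity properties from Proposition~\ref{N,beta:additivity}. Since $f$ factors as a product of the two nonzero holomorphic functions $\det \Dmat$ and $f_E(V)$, we have
\[
N_\rho(f) = N_\rho(\det \Dmat) + N_\rho(f_E(V)),\qquad \beta_\rho(f) \geq \beta_\rho(\det \Dmat)\cdot \beta_\rho(f_E(V)).
\]
The hypothesis on $\Dmat$ directly yields $N_\rho(\det \Dmat)\le N_1$ and $\beta_\rho(\det \Dmat)>\beta_1$. For the factor $f_E(V)$, the uniform-in-$E$ measurements are designed precisely to handle this: by Definition~\ref{hat:functions},
\[
N_\rho(f_E(V))\le \widehat{N}_\rho(V)\le N_2,\qquad \beta_\rho(f_E(V))\ge \widehat{\beta}_\rho(V)>\beta_2,
\]
independently of the choice of $E\in\R$. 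Consequently $N_\rho(f)\le N_1+N_2$ and $\beta_\rho(f)>\beta_1\beta_2$, with both bounds independent of $E$.

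Finally, I would apply Proposition~\ref{lower:bound:f(z)} to $f$ with the parameters $N:=N_1+N_2$ and $\beta:=\beta_1\beta_2$. This produces a constant
\[
\ep_0 = \ep_0(\rho,\delta,N_1+N_2,\beta_1\beta_2)>0,
\]
which we rename $\ep_0(\rho,\delta,N_1,N_2,\beta_1,\beta_2)$, and a circle $\Circ\subseteq\mathrm{int}(\Ann')$ on which $\abs{f(z)}\ge \ep_0$, proving~\eqref{detlbUV}. There is no real obstacle here: the entire content is assembling the multiplicative bounds and noting that the uniform measurements $\widehat{N}_\rho(V)$ and $\widehat{\beta}_\rho(V)$ were defined precisely so that the argument goes through with a bound independent of $E$.
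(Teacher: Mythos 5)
Your proposal is correct and follows essentially the same route as the paper's own proof: factor $\det[\Dmat(z)(V(z)-E\cdot I)]$, invoke Proposition~\ref{N,beta:additivity} to bound $N_\rho$ and $\beta_\rho$ of the product by $N_1+N_2$ and $\beta_1\beta_2$ uniformly in $E$, and then apply Proposition~\ref{lower:bound:f(z)}. The only cosmetic difference is that you explicitly name the second factor $f_E(V)$ and appeal to Definition~\ref{hat:functions}; the paper writes the same inequalities directly with $\widehat{N}_\rho(V)$ and $\widehat{\beta}_\rho(V)$.
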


\begin{proof} Fix $E \in \R$ and let 
$$f (z) := \det [ \Dmat (z) \cdot (V(z)-E \cdot I) ] = \det [ \Dmat (z) ] \cdot \det [ V(z)-E \cdot I ]$$ 
which is clearly a holomorphic function on $\Ann_\rho$.

Using Proposition~\ref{N,beta:additivity} we have:
\begin{align*}
N_\rho (f) &= N_\rho (\det [ \Dmat (z) ] ) + N_\rho (\det [ V(z)-E \cdot I ]) \\
& \le N_\rho (U) + \widehat{N}_\rho (V ) \le N_1 + N_2 =: N < \infty 
\end{align*}
\begin{align*}
\beta_\rho (f) &\ge \beta_\rho (\det [ \Dmat (z) ] ) \cdot \beta_\rho (\det [ V(z)-E \cdot I ]) \\
&\ge \beta_\rho (U) \cdot \widehat{\beta}_\rho (V ) \ge \beta_1 \cdot \beta_2 =: \beta > 0 
\end{align*}

Now simply apply  Proposition~\ref{lower:bound:f(z)} to the function $f(z)$. Note that $\ep_0$ does not depend on $f (z)$ per se, but only on the uniform measurements $N$ and $\beta$ of $f (z)$. In particular, $\ep_0$ is independent of $E$.
\end{proof}

\smallskip

\section{The growth lemma}\label{growth-lemma}
Throughout this paper, an appropriate measure of the size of a square matrix will be its minimum expansion. For completeness we will review some of the properties of this quantity.

Let $P \in \Mat_d (\C)$ be a square matrix of dimension $d$. The minimum expansion of $P$ is defined as:
$$m (P) := \min \{ \norm{ P x } \colon \norm{ x } = 1 \} =  \text{the least singular value of } P$$

Clearly 
$$P \in \GL_d (\C) \text{ if and only if } m (P) > 0$$ 

Since the norm of $P$ is given by
$$\norm{P} = \max  \{ \norm{ P x } \colon \norm{ x } = 1 \} =  \text{the largest singular value of } P$$
we have that
$$0 \le m (P) \le \norm{P}$$
and
$$m (P) = \norm{P^{-1}}^{-1}$$

Moreover, this implies that the minimum expansion is super-multiplicative:
$$m (P_1 \cdot P_2) \ge m (P_1) \cdot m (P_2) \ \text{ for any } P_1, P_2 \in \Mat_d (\R)$$

Since $ \abs{ \det {P} } $ is the product of the singular values of $P$, we have:
$$\frac{\abs{ \det (P) }}{\norm{P}^{d-1}} \le m (P) \le \abs{\det (P) }^{1/d}$$

This inequality says that given an upper bound on the norm of the matrix $P$, having a lower bound on the minimum expansion of $P$ is the same as having a lower bound on the determinant of $P$. 

In particular, if $\abs{ \det (P) } \ge \ep$ and if $\norm{P} \le B$ then
\begin{equation}\label{minexp-det}
m (P) \ge \frac{\ep}{B^{d-1}}
\end{equation}

We will also need the following inequalities. If  $P \in \Mat_d (\C)$ such that $\norm {P} \le \delta < 1$ then
\begin{equation}\label{inv(I+P)}
\norm{ (I+P)^{-1} } \le \sum_{n \ge 0} \ \norm{ P }^n \le \frac{1}{1-\delta} 
\end{equation}
and
\begin{equation}\label{m(I+P)}
m  (I+P) =  \norm{ (I+P)^{-1} }^{-1} \ge 1 - \delta 
\end{equation}

\medskip

We will consider block matrices whose upper left corner blocks are large (i.e. their minimum expansions are large) and show that the norm of their product grows. 

All block matrices below are in $\Mat_m (\C)$ and their upper left corner blocks are in $\Mat_d (\C)$, where $1 \le d \le m$.

\begin{lemma}\label{indgrowth}
Consider the product of block matrices
$$\left[ \begin{array}{ccc}
L   & &   \Wf   \\ 
& & \\ 
\Ws  & &  O \\  \end{array} \right]  \cdot  \left[ \begin{array}{ccc}
S   & &   *   \\ 
& & \\ 
T  & &  * \\  \end{array} \right]   = \left[ \begin{array}{ccc}
L  S +  \Wf T & &  *   \\ 
& & \\ 
\Ws S + O T  & &  * \\  \end{array} \right] =: \left[ \begin{array}{ccc}
\tilde{S}   & &   *   \\ 
& & \\ 
\tilde{T}  & &  * \\  \end{array} \right] $$
where $*$ stands for blocks that do not matter here.
Assume that:
$$m (L) \ge \la \ \text{ and } \ \norm{\Wf}, \norm{\Ws}, \norm{O} \le B$$
where $\la$ and $B$ are some positive constants such that $\la > 3 B$.  

If $m (S) \ge \mu$ and $\norm{ T \, S^{-1} } < 1$ then similar estimates hold for the corresponding blocks of the product:
$$m (\tilde{S}) \ge (\la - B) \cdot \mu \  \text{ and } \ || \tilde{T} \,  \tilde{S}^{-1} || < 1$$
\end{lemma}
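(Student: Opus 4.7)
The plan is to factor the upper-left block $\tilde S$ so that the large piece $L$ and the invertible piece $S$ are displayed explicitly, leaving only a small perturbation in the middle. Writing
\[ \tilde S = L S + \Wf T = L \,(I + P)\, S, \qquad \text{where } P := L^{-1} \Wf T S^{-1}, \]
the hypotheses $\norm{L^{-1}} = 1/m(L) \le 1/\la$, $\norm{\Wf}\le B$, and $\norm{T S^{-1}}<1$ give $\norm{P}\le B/\la < 1/3$, using $\la>3B$. This makes $I+P$ invertible and exhibits $\tilde S$ as a product of three invertible matrices with controlled minimum expansion.

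For the first conclusion I would apply super-multiplicativity of $m(\cdot)$ together with the estimate \eqref{m(I+P)} to this factorization:
\[ m(\tilde S) \ge m(L)\cdot m(I+P)\cdot m(S) \ge \la \cdot (1- B/\la)\cdot \mu = (\la-B)\,\mu. \]
For the second conclusion, the same factorization yields $\tilde S^{-1} = S^{-1}(I+P)^{-1} L^{-1}$, hence
\[ \tilde T\,\tilde S^{-1} = (\Ws S + O T)\,S^{-1}(I+P)^{-1} L^{-1} = (\Ws + O T S^{-1})\,(I+P)^{-1}L^{-1}. \]
Combining $\norm{\Ws},\norm{O}\le B$, $\norm{T S^{-1}}<1$, $\norm{L^{-1}}\le 1/\la$, and the bound $\norm{(I+P)^{-1}}\le \la/(\la-B)$ from \eqref{inv(I+P)}, I would get
\[ \norm{\tilde T\,\tilde S^{-1}}\le 2B\cdot \frac{\la}{\la-B}\cdot \frac{1}{\la} = \frac{2B}{\la-B}, \]
and this is strictly less than $1$ precisely when $\la>3B$, which is the standing hypothesis.

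I do not expect a genuine obstacle here: the whole argument is one good factorization followed by two applications each of the small-perturbation estimates \eqref{inv(I+P)} and \eqref{m(I+P)}. The only subtle point worth flagging is that the threshold $\la>3B$ is essentially sharp for this argument, because the two independent uses of the bound $B/\la$ combine to make $2B/(\la-B)<1$; this also explains why the conclusions are packaged in exactly the form $m(\tilde S)\ge (\la-B)\mu$ and $\norm{\tilde T\tilde S^{-1}}<1$, a shape that is manifestly stable under iteration, which is presumably the role the lemma plays in later inductive estimates on products $M_n(x)$.
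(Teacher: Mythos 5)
Your proof is correct and follows essentially the same argument as the paper: factor $\tilde S$ so that $LS$ is displayed with a small perturbation of the identity, then apply the $m(I+P)$ and $(I+P)^{-1}$ estimates. The only cosmetic difference is that you write $\tilde S = L(I+P)S$ with $P = L^{-1}\Wf T S^{-1}$, whereas the paper writes $\tilde S = [I + \Wf (TS^{-1}) L^{-1}]\,LS$; both placements of the perturbation yield the identical bounds $m(\tilde S)\ge(\la-B)\mu$ and $\norm{\tilde T\tilde S^{-1}}\le 2B/(\la-B)<1$.
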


\begin{proof} We write 
$$L  S +  \Wf T = [ I + \Wf \ (T \ S^{-1}) \ L^{-1} ] \cdot L S$$
Since
$$\norm{  \Wf \ (T \ S^{-1})  \ L^{-1} } \le \norm{ \Wf } \cdot \norm{ T \ S^{-1} } \cdot \norm{ L^{-1} } < \frac{B}{\la} $$
according to \eqref{m(I+P)} we have:
$$m \ [ I +  \Wf \ (T \ S^{-1}) \ L^{-1} ]  \ge 1 - \frac{B}{\la}$$
We then conclude
\begin{align*}
m (L  S +  \Wf T) & \ge  \, m \, [ I + \Wf \ (T \ S^{-1}) \ L^{-1} ]  \cdot m (L) \cdot m (S) \\
& \ge \left(1 - \frac{B}{\la}\right) \, \la \, \mu = (\la - B) \, \mu
\end{align*}
Moreover
\begin{align*}
 \tilde{T} \, \tilde{S}^{-1} &= (\Ws S + O T) \cdot (L  S +  \Wf T)^{-1} \\
& =  (\Ws \ S + O T) \cdot (L S)^{-1} \cdot  [ I +  \Wf \ (T \ S^{-1}) \ L^{-1} ]^{-1} \\
& = (\Ws \ S \cdot S^{-1} L^{-1} + O \cdot (T \ S^{-1}) \cdot L^{-1}) \cdot  [ I +  \Wf \ (T \ S^{-1}) \ L^{-1} ]^{-1} 
\end{align*}
Then
$$\norm{ \tilde{T} \, \tilde{S}^{-1} }  \le \left(\frac{B}{\la} + B \cdot 1 \cdot \frac{1}{\la}\right) \cdot \frac{1}{1 - \frac{B}{\la}} = \frac{2 B}{\la - B} < 1$$
since $\la > 3 B$.

\end{proof}

\begin{lemma}\label{growth}
Consider the product of block matrices
$$M_n = \left[ \begin{array}{ccc}
S_n   & &   *   \\ 
& & \\ 
T_n  & &  * \\  \end{array} \right] := \prod_{j=n}^{1} \ \left[ \begin{array}{ccc}
L_j   & &   \Wf_j   \\ 
& & \\ 
\Ws_j  & &  O_j \\  \end{array} \right] $$
If for all $1 \le j \le n$ we have
$$m (L_j) \ge \la \ \text{ and } \ \norm{\Wf_j}, || \Ws_{j} ||, \norm{O_j} \le B$$
where $\la > 3 B$, then
\begin{equation}\label{growthmexp}
m (S_n) \ge (\la - B)^n
\end{equation}
In particular, we get:
\begin{equation}\label{growthnorm}
\norm{ M_n } \ge (\la - B)^n
\end{equation}

\end{lemma}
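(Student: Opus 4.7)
My plan is a straightforward induction on $n$, with Lemma~\ref{indgrowth} doing all the work at the inductive step. The statement of Lemma~\ref{indgrowth} is almost set up for this: if the ``current'' product $M_n$ has the form $\begin{pmatrix} S_n & \ast \\ T_n & \ast \end{pmatrix}$ with $m(S_n)\ge \mu$ and $\|T_n S_n^{-1}\|<1$, then multiplying on the left by the next block matrix $\bigl[\begin{smallmatrix} L_{n+1} & \Wf_{n+1} \\ \Ws_{n+1} & O_{n+1} \end{smallmatrix}\bigr]$ produces $M_{n+1}$ whose corresponding blocks satisfy $m(S_{n+1}) \ge (\la - B)\mu$ and $\|T_{n+1} S_{n+1}^{-1}\| < 1$. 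Note that the ordering of the product $M_n = \prod_{j=n}^1 A_j$ is exactly the one for which $M_{n+1} = A_{n+1}\cdot M_n$, so Lemma~\ref{indgrowth} applies directly.

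For the base case $n=1$, we have $S_1 = L_1$, $T_1 = \Ws_1$. From the hypotheses, $m(S_1) = m(L_1) \ge \la \ge \la - B$, and
\[
\| T_1 \, S_1^{-1} \| = \|\Ws_1 L_1^{-1}\| \le \|\Ws_1\| \cdot m(L_1)^{-1} \le \frac{B}{\la} < \frac{1}{3} < 1,
\]
so the induction hypothesis holds with $\mu = (\la - B)^0 \cdot (\la - B) = \la - B$ (indeed even $\ge \la$). Assuming inductively that $m(S_n) \ge (\la - B)^n$ and $\|T_n S_n^{-1}\| < 1$, Lemma~\ref{indgrowth} applied with $\mu = (\la-B)^n$ yields $m(S_{n+1}) \ge (\la - B)^{n+1}$ and $\|T_{n+1} S_{n+1}^{-1}\| < 1$, completing the induction and establishing \eqref{growthmexp}.

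It remains to deduce \eqref{growthnorm}. The matrix $S_n$ is the upper-left $d\times d$ block of $M_n$, so for any unit vector $v \in \C^d$, the extension $\tilde v = (v, 0) \in \C^m$ is a unit vector whose image $M_n \tilde v$ has top $d$ components equal to $S_n v$. Hence $\|M_n\| \ge \|M_n \tilde v\| \ge \|S_n v\|$, and taking the supremum over unit $v$ gives $\|M_n\| \ge \|S_n\| \ge m(S_n) \ge (\la - B)^n$, since the operator norm dominates the minimum expansion (largest vs.\ smallest singular value).

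There is essentially no obstacle here: everything has been prepared by Lemma~\ref{indgrowth}. The only item requiring a small argument is passing from the minimum-expansion bound on $S_n$ to the norm bound on $M_n$, and this is handled by the trivial embedding $\C^d \hookrightarrow \C^m$ used above.
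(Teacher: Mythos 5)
Your proof is correct and follows essentially the same route as the paper: induction on $n$ driven by Lemma~\ref{indgrowth}, with the only point needing a separate check being the base case bound $\norm{\Ws_1 L_1^{-1}}\le B/\la<1$, which you verify just as the paper does. The final step $\norm{M_n}\ge\norm{S_n}\ge m(S_n)$ via the block embedding also matches the paper's (implicit) argument.
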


\begin{proof} The statement follows immediately by induction from Lemma \ref{indgrowth}. 
The only thing we need to verify is that $\norm{ \Ws_1 \ L_1^{-1} } < 1$.
Indeed: $$\norm{ L_1^{-1} } = [ m (L_1) ]^{-1} < \frac{1}{\la}$$ so
$$\norm{ \Ws_1 \ L_1^{-1} } <  B \ \frac{1}{\la} < 1$$
\end{proof}

\bigskip

Given a real (or complex) vector space $V$, consider formal $k$-products
$v_1\wedge \ldots \wedge v_k$ of vectors in $V$,
which we assume to be skew-symmetric in the sense that
for any permutation $\sigma\in {\rm S}_k$,
$$  v_{\sigma_1} \wedge \ldots \wedge v_{\sigma_k} 
= (-1)^{{\rm sgn}(\sigma)} v_1\wedge \ldots  \wedge v_k
 \;.  $$
The linear space spanned by all such formal $k$-products
is called the {\em $k$-exterior power of $V$} 
and denoted by $\wedge_k  V$.
We shall briefly recall some of the properties of this
exterior product construction,
which can be found in~\cite{Federer}.
%Indices
Let $\Lambda_k^n$ be the set of all $k$-subsets 
$I=\{i_1,\ldots, i_k\}\subset \{1,\ldots, n\}$,
with $i_1<\ldots <i_k$,
and  order it lexicographically. 
%Basis
Given a basis $\{e_1, \ldots, e_n\}$ of $V$,
define for each $k$-subset $I\in\Lambda^n_k$, the $k$-exterior product $e_I=e_{i_1}\wedge \ldots\wedge e_{i_k}$.
The ordered family $\{e_I\,:\,I\in\Lambda^n_k\}$ is a basis of $\wedge_k V$.
%Linear Maps
Any linear map $T:V\to V$ induces 
a linear map $\wedge_k T:\wedge_k V\to \wedge_k V$
such that $\wedge_k T(v_1\wedge \ldots \wedge v_k)=
T(v_1)\wedge \ldots \wedge T(v_k)$, for given vectors
$v_1,\ldots, v_k\in V$. This construction is functorial
in the sense that $\wedge_k(T'\circ T)=\wedge_k T'\circ  \wedge_k T$, whenever $T,T':V\to V$ are linear maps.
%Matrix of a Linear Map
Given $I,J\in\Lambda^ n_k$,
let $A_{I\times J}$ be the square submatrix of $A$
with indices $(i,j)\in I\times J$.
If a linear map $T:V\to V$ is represented by the matrix
$A=(a_{ij})_{i,j}$ relative to a basis $\{e_1, \ldots, e_n\}$,
 then the $k$-exterior power
$\wedge_k T:\wedge_k V\to \wedge_k V$ is represented
by the matrix $(\det A_{I\times J})_{I,J}$  relative to  the basis $\{e_I\,:\,I\in\Lambda^n_k\}$.
%Inner products
Each inner product in $V$ induces an inner product
in $\wedge_k V$ such that if $\{e_1, \ldots, e_n\}$ is an orthonormal
basis in $V$ then $\{e_I\,:\,I\in\Lambda^n_k\}$ is also an 
orthonormal basis of $\wedge_k V$.
%Orthogonal transformations
In particular, if $T:V\to V$ is an orthogonal transformation
then so is $\wedge_k T:\wedge_k V\to \wedge_k V$.
%Eigenbasis
Given some eigenbasis $\{e_1, \ldots, e_n\}$ of $T$,
with associated eigenvalues $\lambda_1,\ldots, \lambda_n\in\C$,
then $\wedge_k T(e_I)=(\prod_{i\in I} \lambda_i)\, e_I$,  which shows that $\{e_I\,:\,I\in\Lambda^n_k\}$
is an  eigenbasis  of $\wedge_k T$.
%Diagonal matrices
Hence, if $T$ is represented by a diagonal matrix $D$ in the first basis,
then $\wedge_k T$ is represented by the diagonal matrix $\wedge_k D$ in the second basis.
%Singular value decomposition
The singular value decomposition   $M=R_1\,D\,R_2$, with $R_1$, $R_2$ orthogonal (or unitary) matrices, and $D$ diagonal,
induces the singular value decomposition  $\wedge_k M=(\wedge_k  R_1)\, (\wedge_k  D)\, (\wedge_k  R_2)$
at the level of $k$-exterior products.
The singular values of $\wedge_k M$ are the products of $k$ singular values of $M$.
In particular, the minimum expansion $m(M)$ is the product of the $k$ lowest singular values of $M$.

\begin{corollary}\label{c-growth}
Consider matrices $M_n\in\Mat_{m}(\C)$, $S_n\in\Mat_{d}(\C)$ satisfying the assumptions of Lemma~\ref{growth}.

Then for any $1\leq k \leq d$, we have
 \begin{equation} \label{exter:growthmexp}
m\left( \wedge_k S_n\right)\geq (\lambda -B)^{k n} \;,
\end{equation}
 and in particular,
 \begin{equation}  \label{exter:growthnorm}
\norm{\wedge_k M_n}\geq (\lambda -B)^{k n} \;.
\end{equation}
\end{corollary}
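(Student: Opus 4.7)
The plan is to derive both estimates directly from Lemma~\ref{growth} by exploiting the basic properties of singular values and $k$-exterior powers recalled in the paragraphs just before the corollary.

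First I would apply Lemma~\ref{growth} to the product $M_n$ and obtain $m(S_n)\geq (\lambda-B)^n$. Because $S_n\in\Mat_d(\C)$ is square and $m(S_n)$ is by definition its smallest singular value, this inequality actually forces every singular value $\sigma_j(S_n)$ to satisfy $\sigma_j(S_n)\geq (\lambda-B)^n$ for $1\leq j\leq d$. This is the crucial upgrade: a bound on the minimum expansion of a \emph{square} matrix is a uniform bound on the whole singular spectrum.

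Next, as was noted in the text preceding the corollary, the singular values of $\wedge_k S_n$ are exactly the products $\prod_{i\in I}\sigma_i(S_n)$ indexed by $k$-subsets $I\in\Lambda^d_k$, so $m(\wedge_k S_n)$ equals the product of the $k$ \emph{smallest} singular values of $S_n$. Therefore
$$m(\wedge_k S_n)\;\geq\; m(S_n)^k\;\geq\; (\lambda-B)^{kn},$$
which is \eqref{exter:growthmexp}.

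For the norm inequality the only additional ingredient I need is that $S_n$ is the upper-left $d\times d$ submatrix of $M_n$, so the Cauchy-type interlacing theorem for singular values yields $\sigma_j(M_n)\geq \sigma_j(S_n)$ for all $1\leq j\leq d$. Taking the product of the $k$ largest singular values,
$$\norm{\wedge_k M_n}\;=\;\prod_{j=1}^k \sigma_j(M_n)\;\geq\; \prod_{j=1}^k \sigma_j(S_n)\;=\;\norm{\wedge_k S_n}\;\geq\; m(\wedge_k S_n)\;\geq\; (\lambda-B)^{kn},$$
proving \eqref{exter:growthnorm}. There is no substantive obstacle in this corollary: once Lemma~\ref{growth} is established, the step to exterior powers is essentially automatic, because the lower bound $m(S_n)\geq (\lambda-B)^n$ controls every singular value of $S_n$ simultaneously, and singular value interlacing transfers the bound from the upper-left block $S_n$ back up to the full product $M_n$.
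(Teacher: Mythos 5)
Your proof is correct and establishes exactly the same chain of inequalities as the paper. The first inequality $m(\wedge_k S_n)\geq m(S_n)^k\geq(\lambda-B)^{kn}$ is argued identically: the minimum expansion of $\wedge_k S_n$ is the product of the $k$ smallest singular values of $S_n$, each of which is at least $m(S_n)$. For the second inequality you reach the same intermediate fact, $\norm{\wedge_k M_n}\geq\norm{\wedge_k S_n}\geq m(\wedge_k S_n)$, but via a slightly different justification: you invoke singular-value interlacing, $\sigma_j(M_n)\geq\sigma_j(S_n)$, and multiply the top $k$ values using $\norm{\wedge_k A}=\prod_{j=1}^k\sigma_j(A)$. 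The paper instead observes the purely structural fact that $\wedge_k S_n$ is literally the $\Lambda^d_k\times\Lambda^d_k$ submatrix of $\wedge_k M_n$ (the exterior power of a corner block is the corresponding corner block of the exterior power), so $\norm{\wedge_k M_n}\geq\norm{\wedge_k S_n}$ is the elementary submatrix norm inequality, with no need for interlacing. Both are valid; the paper's route stays closer to the exterior-algebra toolkit it has just set up, while yours substitutes a standard analytic inequality for the structural observation. No gap either way.
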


\begin{proof}
Let $0 <s_1^{(n)} \leq s_2^{(n)}\leq \ldots \leq s_r^{(n)}$
be the singular values of $S_n$.
Then 
$$m(\wedge_k S_n) = s_1^{(n)}\ldots s_k^{(n)}\geq (s_1^{(n)})^k = m(S_n)^k\geq (\lambda -B)^{k n}\;.$$
Because $S_n$ is the submatrix of $M_n$ indexed in $\{1,\ldots, r\}$
it follows that $\wedge_k S_n$ is the submatrix of $\wedge_k M_n$ indexed in $\Lambda_k^r$.
Hence 
$$\norm{\wedge_k M_n} \geq \norm{\wedge_k S_n}\geq   m(\wedge_k S_n) \geq (\lambda -B)^{k n}$$
\end{proof}

\smallskip

\section{An estimate on the mean of a subharmonic function}\label{subharmonic_section}
The following result is the main analytic tool used in this paper to establish lower bounds on Lyapunov exponents. It is based on a convexity argument for means of subharmonic functions.

\begin{proposition}\label{subharmonic}
Let $u (z)$ be a subharmonic function on a neighborhood of the annulus $\Ann_\rho = \{ z \colon 1-\rho \le \abs{z} \le 1+\rho \}$. Assume that:
\begin{align}
u(z) \le S & \ \text{ for all } z: \abs{z} = 1 + \rho \label{upperbound}\\
u (z) \ge \gamma & \ \text{ for all } z: \abs{z} = 1 + y_0 \label{y=y_0}
\end{align}
where $0 \le y_0 < \rho$.

Then
\begin{align}\label{sh-est}
\int_{\T} u (x) \, d x \ge \frac{1}{1-\alpha} \, ( \gamma - \alpha S) &
\end{align}
where
\begin{equation}\label{alphachoice}
\alpha = \frac{\log (1 + y_0) }{ \log (1 + \rho) } \sim \frac{y_0}{\rho}
\end{equation}
\end{proposition}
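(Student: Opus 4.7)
The plan is to reduce the problem to the classical Hadamard three-circles principle, in its mean-value incarnation for subharmonic functions. Define the circular mean
\[
M(r) := \frac{1}{2\pi}\int_0^{2\pi} u(r\,e^{i\theta})\,d\theta, \qquad 1-\rho\le r\le 1+\rho.
\]
Since $u$ is subharmonic on a neighborhood of $\Ann_\rho$, a standard result (obtained by averaging the submean inequality on small disks, or equivalently by writing $\Delta u \ge 0$ and computing $\frac{d^2}{d(\log r)^2} M(r)$) tells us that $M(r)$ is a convex function of $\log r$. The three values I care about are
\[
M(1) = \int_{\T} u(x)\,dx, \qquad M(1+y_0)\ge \gamma, \qquad M(1+\rho)\le S,
\]
the first by identifying $\T=\R/\Z$ with the unit circle via $x\mapsto e^{2\pi i x}$, and the latter two by the pointwise hypotheses \eqref{upperbound} and \eqref{y=y_0}.

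Next I would parametrize by $t=\log r$, so the three relevant parameters are $t_0=0$, $t_1=\log(1+y_0)$, and $t_2=\log(1+\rho)$. With $\alpha := t_1/t_2 = \log(1+y_0)/\log(1+\rho)$, one has $t_1 = (1-\alpha)\,t_0 + \alpha\,t_2$, and convexity of $t\mapsto M(e^t)$ gives
\[
\gamma \;\le\; M(1+y_0) \;\le\; (1-\alpha)\,M(1) + \alpha\, M(1+\rho) \;\le\; (1-\alpha)\int_\T u(x)\,dx + \alpha\, S .
\]
Solving this inequality for $\int_\T u(x)\,dx$ yields exactly the desired bound $\frac{1}{1-\alpha}(\gamma-\alpha S)$, and the asymptotic $\alpha\sim y_0/\rho$ in \eqref{alphachoice} is just the first-order expansion of $\log(1+y)/\log(1+\rho)$ for small $y_0$ and small $\rho$ (or at worst a comparison valid for $0\le y_0<\rho<1$).

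There is essentially no obstacle of substance here: the only genuine input is the convexity in $\log r$ of the circular means of a subharmonic function. The two things worth being careful about in writing it up are (i) justifying that $M(r)$ is well-defined and convex in $\log r$ on the closed interval $[1-\rho,1+\rho]$ (which follows because $u$ is subharmonic on an \emph{open} neighborhood of $\Ann_\rho$, so one can extend $r$ slightly and apply the standard statement), and (ii) the identification of $\int_\T u\,dx$ with $M(1)$, which is just the change of variables $x=\theta/(2\pi)$.
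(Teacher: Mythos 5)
Your proof is correct and follows essentially the same route as the paper: both define the circular mean, invoke the $\log$-convexity of circular means of subharmonic functions (the paper cites Duren's Theorem 1.6 and Hardy's convexity theorem), apply it at $r=1$, $r=1+y_0$, $r=1+\rho$ with the same choice of $\alpha$, and solve for $\int_\T u\,dx$. No meaningful difference in approach.
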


\begin{proof}
The proof is a simple consequence of a general result on subharmonic functions, used to derive Hardy's convexity theorem (see Theorem 1.6 and the Remark following it in \cite{Duren}). This result says that given a subharmonic function $u(z)$ on an annulus, its mean along concentric circles is $\log$ - convex. That is, if we define
$$m (r) := \int_{\abs{z} = r} \, u (z) \, \frac{d z}{2 \pi}$$
and if
\begin{equation} \label{logr}
\log r  = (1 - \alpha) \log r_1 + \alpha \log r_2 
\end{equation}
for some $ 0 < \alpha < 1$, then
\begin{equation}\label{logconv}
m (r) \le (1-\alpha) \, m (r_1) + \alpha \, m (r_2)
\end{equation}

It can be shown, using say Green's theorem, that if $u(z)$ were {\em harmonic}, then $m (r)$ would be $\log$ - affine. Then the above result for subharmonic functions would follow using the principle of harmonic majorant (see \cite{Duren} for details).

We apply \eqref{logconv} with $r = 1+y_0$, $r_1 = 1$, $r_2 = 1 + \rho$, so for \eqref{logr} to hold, $\alpha$ will be chosen as in \eqref{alphachoice}. Then the convexity property \eqref{logconv} implies:
\begin{equation} \label{logconvap}
m (1 + y_0) \le (1-\alpha) \, m (1) + \alpha \, m (1 + \rho)
\end{equation}
where
\begin{align}
m (1) & =  \int_{\abs{z} = 1}  u (z) \,  \frac{d z}{2 \pi}  = \int_{\T} \, u (x) d x \label{m1eq}\\
m (1+\rho) & =  \int_{\abs{z} = 1+\rho}  u (z) \, \frac{d z}{2 \pi}  \le S \label{mrhoeq}\\
m (1+y_0) & =  \int_{\abs{z} = 1+y_0} u (z) \, \frac{d z}{2 \pi}  \ge \gamma \label{my0eq}
\end{align}
where  \eqref{mrhoeq} and \eqref{my0eq} are due to \eqref{upperbound} and \eqref{y=y_0} respectively. 

The estimate~\eqref{sh-est} then follows from \eqref{logconvap} - \eqref{my0eq}.

\end{proof}

\begin{remark} \rm{In a previous version of this paper, we derived a similar estimate via a more complicated argument which used harmonic measures. This simpler approach and the reference \cite{Duren} were suggested to the second author by Barry Simon. This convexity argument in fact improves our previous lower bound on the mean of $u (x)$ along the torus $\T$ (because of the extra factor $\frac{1}{1-\alpha}$) and this in turn improves the lower bounds on the first $d$ Lyapunov exponents. 
}
\end{remark}

\smallskip

\section{The proof of the main statements}\label{proof}

The conclusion in Theorem \ref{main-allE} is stronger than the one in Theorem \ref{main}, since the lower bounds on the Lyapunov exponents for the family of cocycles $(T, \Ae)$ hold uniformly in $E \in \R$. The assumptions are also stronger: the potential function $V (x)$ has no constant eigenvalues vs. the potential $V (x)$ does not have $0$ as a constant eigenvalue. The size $\la_0$ of the coupling constant is also larger in Theorem~\ref{main-allE}, since it depends on the stronger uniform bounds \eqref{unifbound-allE-V} rather than on the weaker uniform bounds \eqref{unifbound}.

We present here only the proof of Theorem \ref{main-allE}, since modulo some obvious modifications, the proof of Theorem \ref{main} is similar.

\begin{proof}  For every $1 \le k \le d$ and for every $n \ge 1$, let:
\begin{equation}\label{uk(x)}
\uk (x) = \uk_n (x ; \la, E) := \frac{1}{n} \ \log \norm{ \wedge_k M_n (x ; \la,  E) } 
\end{equation}
where
\begin{equation}\label{Mn(x)}
M_n (x ; \la, E) := \prod_{j=n-1}^{0} \ \Ae (T^j \, x) 
\end{equation}
is the $n$th iteration of the cocycle $(T, \Ae (x))$, and $\wedge_k  \, M_n (x ; \la, E) $ is the $k$-exterior power of $M_n (x ;  \la, E)$ as defined in Section~\ref{growth-lemma}.

From \eqref{formula-lyap-i} and \eqref{formula-lyap} we have:
\begin{equation}\label{sum-lyap}
\LE{1} (\Ae) + \ldots + \LE{k} (\Ae) = \lim_{n \to \infty} \ \int_{\T} \uk_n (x; \la, E)  \, d x 
\end{equation}

Since $\Ae (x) \in \Arho (\T, \Mat_m (\R))$, the cocycle has a holomorphic extension $\Ae (z) \in \Mat_m (\C)$ to a neighborhood of the annulus $\Ann_\rho = \{ z \colon 1 - \rho \le \abs{z} \le 1 + \rho \}$. The transfer matrices and their exterior powers will also have holomorphic extensions to the same neighborhood of $\Ann_\rho$. Therefore, the functions $\uk (x) = \uk_n (x ; \la, E)$ defined above have {\em subharmonic} extensions
$$
\uk (z) = \uk_n (z ; \la, E) := \frac{1}{n} \ \log \norm{ \wedge_k M_n (z ; \la,  E) } 
$$
to a neighborhood of the annulus $\Ann_\rho$.

Given the upper bounds~\eqref{supnorm-all} on the norms of the blocks forming the cocycle $\Ae$, for every $z \in \Ann_\rho$ we have
$$\norm{\Ae (z)} \le \la \cdot B \, (B + \abs{E})$$
which then implies:
$$\norm{ M_n (z ; \la, E) } \le [ \la \cdot B \, (B + \abs{E}) ]^{n}$$
and
$$\norm{\wedge_k  \, M_n (z ; \la, E) } \le [ \la \cdot B \, (B + \abs{E}) ]^{k n}$$

Hence for all $1 \le k \le d$ we have:
\begin{equation}\label{ub-ukE}
\uk (z) \le k \log \, [ \la \cdot B \, (B + \abs{E}) ]
\end{equation} 

In particular, using \eqref{sum-lyap}, we have the following {\em upper} bound on the sum of the first $k-1$ Lyapunov exponents (if $k > 1$):
\begin{equation}\label{ub-sum-lyap-E}
\LE{1} (\Ae) + \ldots + \LE{k-1} (\Ae) \le (k-1)  \log \, [ \la  \cdot B \, (B + \abs{E}) ]
\end{equation}

\smallskip

For every $1 \le k \le d$, we will derive a {\em lower} bound on the sum of the first $k$ Lyapunov exponents which, when combined with \eqref{ub-sum-lyap-E} above, will lead to a lower bound on each individual Lyapunov exponent. We first explain the strategy used to obtain these lower bounds. 

As in \cite{S-S}, \cite{B}, we avoid the set $\{ x \colon m [ U (x) \, (V(x) - E \cdot I)  ] \approx 0 \}$ through complexification in the space variable $x$. Combining this with the fact that the coupling constant $\la$ is large enough, we will ensure that the assumptions in the growth lemma~\ref{growth} are met along a circle $\{ z \colon \abs{z} = 1 + y_0 \}$ which is close enough to the torus $\T$ (i.e. $y_0 \ll \rho$). This will give us a lower bound on the subharmonic function $\uk (z)$ along the circle $\{ z \colon \abs{z} = 1 + y_0 \}$. An upper bound for these functions along the (outer) circle $\{ z \colon \abs{z} = 1 + \rho \}$ follows from \eqref{ub-ukE}. 

Using then the subharmonic estimate \eqref{sh-est} in Proposition~\ref{subharmonic}, we will get a lower bound on the mean of the  subharmonic function $\uk (z)$ along the torus $\T$, which in turn, using \eqref{sum-lyap} will lead to the lower bound on the sum of the first  $k$ Lyapunov exponents. 

\medskip

We need to distinguish between the case when the parameter $E$ is large relative to the function $V$, and the case when $E$ is close to the range of the function $V$.

\smallskip

$\blob$ Assume that $E$ is large relative to the bound $B$ on the sup norm of $V$, say $\abs{E} > 2 B$. In this case, for every $1 \le k \le d$, we will obtain upper and lower bounds both of order $k \, \log (\la \, \abs{E})$.

\smallskip

From \eqref{ub-ukE} we get the following upper bound on the subharmonic function $\uk (z)$ on the outer circle:
\begin{equation}\label{upperb-largeE}
\uk (z) \le k  \log (\la  \abs{E} ) + k \log \, (\frac{3}{2} \, B) =: \Sk \ \text{ for all } z \colon \abs{z} = 1+\rho
\end{equation}

We now derive the lower bound. Let
$$L_{\la, E} (z) := \la \,\Dmat (z) \cdot (V (z) - E \cdot I)$$
be the upper left corner of  $\Ae (z)$.

For all $ z \in \Ann_\rho$ we have:
$$V (z) - E \cdot I =  ( - E \cdot I ) \cdot [ I  - E^{-1} \cdot V (z) ] $$

But
$$m (-E \cdot I) =  \abs{E} $$
and
$$\norm{E^{-1} \cdot V (z)} < (2 B)^{-1} \cdot B = \frac{1}{2}$$
so by \eqref{m(I+P)} 
$$m [ I - E^{-1} \cdot V (z)] \ge 1 - \frac{1}{2} = \frac{1}{2}$$

It follows that for all $z \in \Ann_\rho$:
\begin{equation}\label{m[V]-largeE}
m [ V(z) - E \cdot I] \ge m (- E \cdot I) \cdot m [ I -  E^{-1} \cdot V (z)]  > \frac{\abs{E} }{2}
\end{equation}

Fix $0 < \delta \ll \rho$, to be specified at the end of the proof,  and let $\Ann^\prime := \{ z \colon 1+\delta \le \abs{z} \le 1+2\delta \}$. 

Given the bounds \eqref{unifbound-allE-U} on $U (z)$, applying Corollary~\ref{U:bounds} we get: there is $\ep_0 = \ep_0 (\rho, \delta, N_1, \beta_1) > 0$ and there is a circle $\Circ = \{ z \colon \abs{z} = 1+y_0 \} \subseteq {\rm int}(\Ann')$, hence $y_0 \sim \delta$ such that:
$$
\abs{\det [ \Dmat(z) ] }\geq  \ep_0 \quad \text{ for all } z  \colon \abs{z} = 1+y_0
$$
Since also $\norm{U (z)}_\rho \le B$, from  \eqref{minexp-det} we get:
\begin{equation}\label{m[U]-largeE}
m [ \Dmat(z) ] \geq  \frac{\ep_0}{B^{d-1}} =: \ep_1 \quad \text{ for all } z  \colon \abs{z} = 1+y_0
\end{equation}

Combining \eqref{m[U]-largeE} and \eqref{m[V]-largeE} we conclude:
\begin{equation}\label{m[L]-largeE}
m [ L_{\la, E} (z) ] >  \la \frac{\abs{E}}{2} \, \ep_1  \quad \text{ for all } z  \colon \abs{z} = 1+y_0
\end{equation}

If we choose 
\begin{equation}\label{lambda-large1}
\la > \frac{3 }{\ep_1} =: \la_0
\end{equation}
then from \eqref{m[L]-largeE} we have
\begin{equation}\label{m[L]-largeE2}
m [ L_{\la, E} (z) ] > \la \frac{\abs{E}}{2} \, \ep_1 > 3 B  \quad \text{ for all } z  \colon \abs{z} = 1+y_0
\end{equation}

Since every circle centered at $0$ is invariant under the complex extension of the rotation $T  x = x + \omega$, then \eqref{m[L]-largeE2} applies to  $z_j := T^j \, z$ for all $z  \colon \abs{z} = 1+y_0$ and all $ j \ge 0$.  
Therefore, the assumptions in the growth lemma \ref{growth} apply to the matrices
$$\Ae (z_j) = \left[ \begin{array}{ccc}
L_{\la, E} (z_j)  & &   W^\flat  (z_j)  \\ 
& & \\ 
W^\sharp  (z_j)  & &  O  (z_j) \\  \end{array} \right]  
$$
Since
$$M_n (z_j ; \la, E) = \prod_{j=n-1}^{0} \, \Ae (z_j ; \la, E)$$
using Corollary~\ref{c-growth} we conclude that
\begin{equation}\label{Mny=y0}
\norm{\wedge_k  M_n (z ; \la, E)}   \ge (\la \abs{E} \, \frac{\ep_1}{2} - B )^{k n}  >  (\la \abs{E} \, \frac{\ep_1}{3} )^{k n}  
\end{equation}
provided we choose $\la > \la_0$.

For every $z  \colon \abs{z} = 1+y_0$ we then have:
\begin{align*}
\uk (z) = \uk_n (z ; \la, E) = \frac{1}{n} \, \log \norm{\wedge_k  M_n (z ;  \la, E)} 
\ge k \, \log (\la \abs{E} \, \frac{\ep_1}{3} )
\end{align*}
Hence
\begin{equation}\label{lowerb-largeE}
\uk (z) \ge k \log (\la \abs{E}) - k \log \, \frac{3}{\ep_1} =: \gak \ \text{ for all } z \colon \abs{z} = 1+y_0
\end{equation}

Given the upper bound \eqref{upperb-largeE} and the lower bound \eqref{lowerb-largeE}, we may apply  Proposition~\ref{subharmonic} and conclude from \eqref{sh-est} that
\begin{align*}
\int_{\T} \uk (x) d x \ge \frac{1}{1-\alpha} \, (\gak - \alpha \, \Sk) = \\
\frac{1}{1-\alpha} \, [ ( k \log (\la \abs{E}) - k \log \, \frac{3}{\ep_1} ) - \alpha ( k  \log (\la  \abs{E}) + k \log \, \frac{3}{2} \, B ) ] = \\
k \log (\la \abs{E}) - k \, \frac{1}{1-\alpha} \, [ \log \, \frac{3}{\ep_1} + \alpha \, \log \, \frac{3}{2} \, B  ] 
\end{align*} 

Hence if $0 < \alpha < \frac{1}{2}$, then
\begin{equation}\label{est-largeE}
\int_{\T} \uk (x) d x \ge k \log (\la \abs{E}) - k \log \, \frac{27 B}{2 \ep_1^2}
\end{equation}
provided $\la > \la_0$. 

From \eqref{est-largeE} and \eqref{sum-lyap} we get:
\begin{equation}\label{lb-sum-lyap-largeE}
\LE{1} (\Ae) + \ldots + \LE{k} (\Ae) \ge k \log (\la \abs{E}) - k \log \, \frac{27 B}{2 \ep_1^2}
\end{equation}

From \eqref{upperb-largeE} and   \eqref{sum-lyap} we have:
\begin{equation}\label{ub-sum-lyap-largeE}
\LE{1} (\Ae) + \ldots + \LE{k-1} (\Ae) \le (k-1) \,  \log (\la \abs{E}) + k \, \log \, \frac{3 B}{2} 
\end{equation}

Combining \eqref{lb-sum-lyap-largeE} and \eqref{ub-sum-lyap-largeE}, we conclude that if $\la > \la_0 = \frac{3}{\ep_1}$, then 
\begin{align}\label{conclusion-largeE}
 \LE{k} (\Ae) 
& \ge \log \, (\la \abs{E} )  - k \log \, \frac{81 B^{2}}{4 \ep_1^2}
\end{align}
which shows that when  $E$ is large relative to the function $V (x)$, the first $d$ Lyapunov exponents are all of order $\log (\la \abs{E})$.

\medskip

$\blob$ Assume that the parameter $E$ is near the range of the function $V (x)$.
Then  from now on, $E$ will be a fixed parameter in the compact set $[-2 B, 2 B]$.

From \eqref{ub-ukE} we get the following upper bound on the subharmonic function $\uk (z)$ on the outer circle:
\begin{equation}\label{upperb-sE}
\uk (z) \le  k \, \log \la + k \, \log (3 B^2)  =: \Sk \ \text{ for all } z \colon \abs{z} = 1+\rho
\end{equation}

Fix $0 < \delta \ll \rho$ to be specified at the end of the proof.

Since the function $V (x)$ satisfies the uniform bounds \eqref{unifbound-allE-V}, by Corollary~\ref{V:bounds} there is $\epob = \epob (\rho, \delta, N_1, N_2, \beta_1, \beta_2) > 0$ such that for the fixed parameter $E$, there is a circle  $\Circ = \{ z \colon \abs{z}  = 1 + y_0\}$, where $y_0 \sim \delta$ so that along this circle we have:
$$
 \abs{\det [ \Dmat (z) \cdot (V(z)-E \cdot I) ] }\geq  \epob \quad \text{ for all } z \colon \abs{z} = 1+y_0
$$

Combining this lower bound on the determinant with the upper bound $\norm{\Dmat (z) \cdot (V(z)-E \cdot I)} \le 3 B^2$ on the norm, and using 
\eqref{minexp-det} we conclude that for all $z \colon \abs{z} = 1+y_0$ we have:
\begin{equation}\label{m>ep1}
m  [  \Dmat (z) \cdot (V(z)-E \cdot I)]  \ge \frac{\epob}{(3 B^2)^{d-1}} =: \epub 
\end{equation}

Then along this circle, we have the following lower bound on the minimum expansion of the upper left corner $L_{\la, E} (z)$ of the cocycle:
\begin{equation}\label{largeblocks1}
m  [ L_{\la, E} (z)  ]  \ge \la \  \epub \quad  \text{ for all } z \colon \abs{z} = 1+y_0
\end{equation}

If we choose $\la$ such that
\begin{equation}\label{lambda-large2}
\la > \frac{3 B}{\epub} =: \overline{\la_0}
\end{equation}
then from \eqref{largeblocks1} we have
\begin{equation}\label{largeblocks}
m  [ L_{\la, E} (z)  ]  \ge \la \  \epub  > 3 B  \ \text{ for all } z \colon \abs{z} = 1+y_0
\end{equation}

As before,  \eqref{largeblocks} holds for 
$z_j := T^j \, z$, for all $z \colon \abs{z} = 1+y_0$ and for all $j \ge 0$. 
Therefore, the assumptions in the growth lemma \ref{growth} apply to the matrices
$\Ae (z_j)$ and
using Corollary~\ref{c-growth} we conclude that
\begin{equation}\label{Mny=y0}
\norm{\wedge_k  M_n (z ; \la, E)}   \ge ( \la \  \epub - B)^{k n} \ge  ( \la \, \frac{2}{3} \,  \epub )^{k n}
\end{equation}

For every $z  \colon \abs{z} = 1+y_0$ we then have:
\begin{align*}
\uk (z) = \uk_n (z ; \la, E) = \frac{1}{n} \, \log \norm{\wedge_k  M_n (z ;  \la, E)} 
\ge k \log \,  \la \, \frac{2}{3} \,  \epub
\end{align*}
Hence
\begin{equation}\label{lowerb}
\uk (z) \ge  k \log \la - k \log \, \frac{3}{2 \epub} =: \gak \ \text{ for all } z \colon \abs{z} = 1+y_0
\end{equation}

We can now apply Proposition \ref{subharmonic} to the functions $\uk (x) = \uk_n (x)$ and conclude that:
\begin{align*}
\int_{\T} \uk (x) d x \ge \frac{1}{1-\alpha} \, (\gak - \alpha \, \Sk) = \\
\frac{1}{1-\alpha} \, [ ( k \log \la - k \log \, \frac{3}{2 \epub} ) - \alpha ( k  \log \la + k \log \, 3  B^2 ) ] = \\
k \log \la - k \, \frac{1}{1-\alpha} \, [  \log \, \frac{3}{2 \epub}  + \alpha \log 3 B^2 ]
\end{align*} 

Hence if $0 < \alpha < \frac{1}{2}$, then
\begin{equation}\label{est-sE}
\int_{\T} \uk (x) d x \ge k \, \log \la - k \, \log \, \frac{27 B^2}{4 \epub^2}
\end{equation}
provided $\la > \overline{\la_0}$.

From \eqref{est-sE} and \eqref{sum-lyap} we get:
\begin{equation}\label{lb-sum-lyap-sE}
\LE{1} (\Ae) + \ldots + \LE{k} (\Ae) \ge  k \, \log \la - k \, \log \, \frac{27 B^2}{4 \epub^2}
\end{equation}

From \eqref{upperb-sE} and   \eqref{sum-lyap} we have:
\begin{equation}\label{ub-sum-lyap-sE}
\LE{1} (\Ae) + \ldots + \LE{k-1} (\Ae) \le (k-1) \,  \log  \la  + k \, \log \, 3 B^2
\end{equation}

Combining \eqref{lb-sum-lyap-sE} and \eqref{ub-sum-lyap-sE}, we conclude that if $\la > \overline{\la_0} =\frac{3 B}{\epub}$ then 
\begin{align}\label{conclusion-smallE}
 \LE{k} (\Ae) 
& \ge \log \la  - k \log \, \frac{81 B^{4}}{4 \epub^2}
\end{align}

\medskip

We now indicate how $\delta$, the width of the annulus $\Ann^\prime$ (where we find the circles along which we have uniform hyperbolicity)  is chosen. 

Since $$\alpha = \frac{\log (1 + y_0)}{\log (1+\rho)}$$ and since $y_0 < 2 \delta$, to ensure that $0 < \alpha < \frac{1}{2}$, it is enough to choose 
$$0 < \delta < \frac{\sqrt{1+\rho}-1}{2}$$
 \end{proof}

\begin{remark}\label{size-la}\rm{
In the case of parameters $E$ in a bounded interval $[-2 B, 2 B]$, to obtain the lower bound \eqref{conclusion-smallE} on the first $d$ Lyapunov exponents of the cocycle, the coupling constant $\la$ needs to be chosen such that:
$$\la > \overline{\la_0} =\frac{3 B}{\epub}$$
where
\begin{equation}\label{minexp-ep1}
m  [  \Dmat (z) \cdot (V(z)-E \cdot I)]  \ge \epub
\end{equation}
holds along some circle which is close enough to the torus $\T$.

This shows that the threshold $\overline{\la_0}$ for the size of the coupling constant, and the lower bounds \eqref{conclusion-smallE} on the first $d$ Lyapunov exponents do {\em not}, a-priori, depend on the dimension $d$ of the upper left corner block of the cocycle. They depend only on the lower bound of its minimum expansion along a circle which is close enough to the unit circle, and on the sup norms of the blocks of the cocycle. 

The calculations above show that $\la_0$ can be estimated {\em explicitly} from the measurements on the matrix blocks forming the cocycle. Those estimates may involve the dimension $d$ of the upper left corner block, if an estimate on the minimum expansion is obtained using  \eqref{minexp-det}, i.e. via a lower bound on the determinant and an upper bound on the norm.  

However, if one has an independent procedure for estimating the minimum expansion of the upper left corner block of the cocycle, then the dimension $d$ would not enter the estimates on the coupling constant and on the lower bounds of the Lyapunov exponents.  

A similar observation applies to the case of large parameters $E$, the only difference being that the relevant quantity is the minimum expansion of the factor $\Dmat (z)$.
 
}
\end{remark}

\smallskip

\section{Applications and extensions of the main statements}\label{consequences}

Standard examples of linear cocycles are Schr\"{o}dinger coycles associated with lattice Schr\"{o}dinger operators.

\smallskip  

Given a potential function $ v \in \Arho (\T, \R)$, a frequency $\om \in \R \setminus \Q$ and a coupling constant $\la > 0$, consider the quasi-periodic \textit{integer} lattice Schr\"{o}dinger operator $H_{\la, x}$ acting on square summable sequences of real numbers $l^2 (\Z, \R)$ by
\begin{equation}\label{s-op} 
[H_{\la, x} \, \psi]_n := - \psi_{n+1} - \psi_{n-1} + \la \, v (x + n \om) \, \psi_n
\end{equation}

The associated Schr\"{o}dinger equation 
\begin{equation}\label{s-eq} 
[H_{\la, x} \, \psi]_n = - \psi_{n+1} - \psi_{n-1} + \la \, v  (x + n \om) \, \psi_n = E \, \psi_n
\end{equation}
can be written as
\begin{equation}\label{s-eq2}
\left[ \begin{array}{cc}
\psi_{n+1} \\
\psi_n 
 \end{array} \right]  = A_{\la, E} (x + n \om) \cdot
 \left[ \begin{array}{cc}
\psi_{n} \\
\psi_{n-1} 
 \end{array} \right]  
\end{equation}
where
\begin{equation}\label{s-cocycle}
A_{\la, E} (x) = \left[ \begin{array}{ccc}
\la \, v ( x )  - E     &   - 1 \\ 
1 &  0  \\  \end{array} \right]  \in \SL_{2} (\R)
\end{equation}
is called the Schr\"{o}dinger (family of) cocycle(s) associated to the  equation \eqref{s-eq}.

\medskip

$\blob$ Assuming that the potential function $v (x)$ is non-constant, and that the coupling constant $\la$ is large enough,  positivity of the Lyapunov exponent for the family of cocycles \eqref{s-cocycle} is given by Sorets-Spencer's theorem (see \cite{S-S}).

\medskip

More generally, consider the quasi-periodic \textit{band} lattice Schr\"{o}dinger operator  $H_{\la, x}$  acting on 
$l^2 (\Z \times \{1, \ldots d \}, \R) \cong l^2 (\Z, \R^d)$, $d\ge1$ by
\begin{equation}\label{band-s-op} 
[H_{\la, x} \, \vpsi]_n := - \vpsi_{n+1} - \vpsi_{n-1} + V_\la (x + n \om) \, \vpsi_n
\end{equation}
where $V_\la \in \Arho (\T, \Sym_d (\R))$ and  $\vpsi_n$ is regarded as a vector in $\R^d$. 

\smallskip

$\blob$ If $ V_\la (x)$ is diagonal and $V_\la (x) = \la \, \text{ diag } [v_1 (x), \ldots,  v_d (x)]$, then the corresponding $2 d$-dimensional  Schr\"{o}dinger cocycle is a direct sum of $2$-dimensional Schr\"{o}dinger cocycles. Therefore, if each of the diagonal entries $v_j (x)$ is a non-constant analytic function,  positivity of the $d$ largest Lyapunov exponents of $A_{\la, E} (x)$ is a direct consequence of Sorets-Spencer's one-dimensional result. Moreover, other one-dimensional results and methods extend to this model (although not in such a straightforward manner, but involving much more effort): J. Bourgain and S. Jitomirskaya proved (see \cite{BJ}) Anderson localization for this diagonal model with large coupling constant $\la$.  

\medskip

$\blob$ Positivity of the first $d$ Lyapunov exponents for the case of a {\em constant} perturbation of the diagonal  was established by  I. Ya. Goldsheid and E. Sorets (see \cite{SG}).

More precisely, the result holds for the operator  \eqref{band-s-op}  with $V_\la (x) = \la \, \text{ diag } [v_1 (x), \ldots,  v_d (x)] - R$, where $R \in \Sym_d (\R)$ is a constant matrix, each diagonal entry $v_j (x)$ is a non-constant analytic function, and $\la$ is large enough.

The corresponding Schr\"{o}dinger operator in this case can of course be written in the form:
$$
[H_{\la, x} \, \vpsi]_n := - (\vpsi_{n+1} + \vpsi_{n-1} + R \, \vpsi_n) + \la \, D (x + n \om) \, \vpsi_n
$$
where
$$D (x) =   \text{diag } [v_1 (x), \ldots,  v_d (x)]  = \left[ \begin{array}{ccc}
v_1  ( x )  & \ldots   &   0  \\ 
\vdots & \ddots & \vdots\\ 
0 & \ldots &  v_d (x) \\  \end{array} \right]  
$$

\bigskip

Given the generality of the cocycle $A_{\la, E} (x)$ we have defined in \eqref{cocycle-allE}, Theorem~\ref{main-allE} will apply to a much more general version of the above cocycles. This application  includes cocycles associated to quasi-periodic Jacobi operators or to quasi-periodic, finite range hopping lattice or band lattice  Schr\"{o}dinger operators. 

Recall from section~\ref{main_section} that given ``weight''  functions $W \in  \Arho (\T, \Mat_d (\R))$, $R \in  \Arho (\T, \Sym_d (\R))$ and a potential function $D \in  \Arho (\T, \Sym_d (\R))$, if we denote
$$
W_n (x) := W (x+n \om), R_n (x) := R (x+n \om), D_n (x) := D (x+n \om)
$$
then we can define the self-adjoint operator $H_{\la, x} $ on $l^2 (\Z, \R^d)$ by
\begin{equation}\label{J-op-ap}
[ H_{\la, x} \, \vpsi ]_n := - (W_{n+1} (x) \, \vpsi_{n+1} + W^{T}_{n} (x) \, \vpsi_{n-1} + R_{n} (x) \, \vpsi_{n}) + \la \, D_n (x) \, \vpsi_n 
\end{equation}

Consider the associated Schr\"{o}dinger equation
\begin{equation}\label{J-eq-ap}
- (W_{n+1} (x) \, \vpsi_{n+1} + W^{T}_{n} (x) \, \vpsi_{n-1} + R_{n} (x) \, \vpsi_{n}) + \la \, D_n (x) \, \vpsi_n 
= E \, \vpsi_n
\end{equation}

We are now ready to prove our main application, namely Theorem~\ref{main-ap}.

\begin{proof}
The Schr\"{o}dinger equation \eqref{J-eq-ap} can be written in the form:
$$
W_{n+1} (x) \, \vpsi_{n+1} =  [ \la \, D_n (x) - R_{n} (x) - E \cdot I ] \, \vpsi_n - W^{T}_{n} (x) \, \vpsi_{n-1}  
$$

To simplify notations,  replace $E$ by $E / \la$ and denote:
$$V (x)  := D (x) - \la^{-1} \, R (x) \ \text{ and } \ V_n (x) := V (x + n \om)$$

Then the above equation becomes:
\begin{equation}\label{J-eq3-ap}
\vpsi_{n+1} =  \la \, W_{n+1}^{-1}  (x) \cdot  [ V_n (x) - E \cdot I ] \, \vpsi_n - W_{n+1}^{-1}  (x) \cdot  W^{T}_{n} (x) \, \vpsi_{n-1}  
\end{equation}

Writing it as a first order finite differences vectorial equation, we get:
\begin{equation}\label{J-eq2-ap}
\left[ \begin{array}{cc}
\vpsi_{n+1} \\
\\
\vpsi_n 
 \end{array} \right]  = 
A_{\la, E} (x + n \om) \cdot
 \left[ \begin{array}{cc}
\vpsi_{n} \\
\\
\vpsi_{n-1} 
 \end{array} \right]  
\end{equation}
where
\begin{align*}
 A_{\la, E} (x + n \om)  
 = \left[ \begin{array}{ccc}
\la \, W_{n+1}^{-1} (x) \, (V_n (x) - E \cdot I)      &   - W_{n+1}^{-1} (x) \cdot W_{n}^{T} (x)  \\ 
\\
I  &  \zero  \\  \end{array} \right]  
\end{align*}

Therefore, the linear cocycle associated with \eqref{J-eq-ap} is given by
\begin{equation} \label{J-cocycle}
 A_{\la, E} (x) := 
 \left[ \begin{array}{ccc}
\la \, W^ {-1} (x + \om) \, (V (x) - E \cdot I)      &   - W^{-1} (x + \om) \cdot W^{T} (x)  \\ 
\\
I  &  \zero  \\  \end{array} \right] 
\end{equation}

The matrix valued function $W (x)$ is not necessarily invertible for every value of $x \in \T$. However,  due to \eqref{TC-W} we have
$$g (x) := \det [ W (x) ]  \not \equiv 0$$
Moreover, since $W (x)  \in  \Arho (\T, \Mat_d (\R))$, $g (x)$ also has a holomorphic extension, hence $g (x)  \in  \Arho (\T, \R)$.

Therefore, $g (z)$ has finitely many zeros in the annulus $ \Ann_\rho$, so there are finitely many values of $x \in \T$ for which $W^{-1} (x)$  is not defined.

Hence the cocycle  $A_{\la, E} (x)$ in \eqref{J-cocycle} and its iterates are defined for all but  a countable (hence negligible) set of phases $x \in \T$. In particular the Lyapunov exponents of this cocycle are well defined. 

We replace the cocycle $A_{\la, E} (x)$ by one with no singularities, for which Theorem~\ref{main-allE} applies, and transfer all singularities to a one dimensional cocycle. 

By Cramer's rule, for all $x \in \T$, 
$$W (x) \cdot \adjW (x) = \det [ W (x) ] \cdot I$$
or, for all but finitely many $x \in \T$,
$$W^{-1} (x) = \frac{1}{\det [ W (x) ]} \cdot \adjW (x)$$
where $\adjW (x)$ is the adjugate matrix of $W (x)$ (i.e. the transpose of the matrix whose entries are the minors of $W (x)$). 

Then if we multiply the cocycle  $A_{\la, E} (x)$ by $g (x+\om) =  \det [ W (x+\om) ]$, we obtain the cocycle
 \begin{equation}\label{J-adjcocycle}
 \tilde{A}_{\la, E} (x) := 
 \left[ \begin{array}{ccc}
\la \, \adjW (x + \om) \, (V (x) - E \cdot I)      &   - \adjW (x + \om) \cdot W^{T} (x)  \\ 
\\
g (x+\om) \cdot I  &  \zero  \\  \end{array} \right]  
\end{equation}

If we set
\begin{align*}
U (x) & :=  \adjW (x + \om) \\
%V (x) & := D (x) - \la^{-1} R (x) \\
\Wf (x) & := - \adjW (x + \om) \cdot W^{T} (x)  \\ 
\Ws (x) & := g (x+\om) \cdot I \\
O (x) & := \zero
\end{align*}
then the hypotheses  \eqref{unifbound-allE-U} -  \eqref{TC-allE} of Theorem~\ref{main-allE} apply to the cocycle $ \tilde{A}_{\la, E} (x)$.
Indeed:
$$\det [ U (x) ] = \det [ \adjW (x + \om) ] = (g (x+\om))^{d-1} \not \equiv 0$$
so \eqref{TC-U} holds, and in particular  $N_\rho (U) =: N_1 < \infty$ and $\beta_\rho (U) > \beta_1 > 0$, which establish  \eqref{unifbound-allE-U}.

From the assumption \eqref{TC-D}, the matrix $D (x)$ has no constant eigenvalues, i.e. $D(x) \in \V$. Since $\V$ is open, there is $\ep = \ep (D) > 0$ such that if 
$$\norm{\la^{-1} \, R }_\rho = \frac{\norm{R}_\rho}{\la} < \ep$$
then $$V (x) = D (x) - \la^{-1} \, R (x) \in \V$$
provided $\la$ is large enough depending on $D$ and $R$. 
This means that $V (x)$ has no constant eigenvalues either, hence \eqref{TC-allE} holds.

Moreover, since $D (x)  \in \V$, we have $\widehat{N}_\rho (D) < \infty$, $\widehat{\beta}_\rho (D) > 0$. By proposition~\ref{continuity_hats}, the uniform estimates  $\widehat{N}_\rho$ and  $\widehat{\beta}_\rho$ are upper semi-continuous and lower semi-continuous respectively. Then for $\la$ large enough depending on $D$ and $R$, we have:
\begin{align*}
\widehat{N}_\rho (V) & = \widehat{N}_\rho (D - \la^{-1} \, R) \le \widehat{N}_\rho (D) + 1 =: N_2 < \infty \\
\widehat{\beta}_\rho (V) & = \widehat{\beta}_\rho (D -  \la^{-1} \, R) \ge \frac{\widehat{\beta}_\rho (D)}{2} =: \beta_2  > 0
\end{align*}  
which shows that \eqref{unifbound-allE-V} holds as well.

Finally, assuming $\la > 1$, we clearly have:
\begin{align*}
\norm{U}_\rho & = \norm{\adjW}_\rho \le \norm{W}_\rho^{d-1}\\
\norm{V}_\rho & \le \norm{D}_\rho + \la^{-1} \, \norm{R}_\rho \le \norm{D}_\rho + \norm{R}_\rho \\
 \norm{\Wf}_\rho & \le  \norm{W}_\rho^{d-1} \cdot  \norm{W}_\rho = \norm{W}_\rho^{d} \\
 \norm{\Ws}_\rho & \le \norm{W}_\rho^d
\end{align*}
Putting
$$B := \max \{ \norm{W}_\rho^{d-1}, \norm{W}_\rho^{d},  \norm{D}_\rho + \norm{R}_\rho \} < \infty$$
we have that \eqref{supnorm-all}  holds as well.

Theorem~\ref{main-allE} then applies to the cocycle $\tilde{A}_{\la, E} (x)$ and we have: there are constants 
 $\la_0 = \la_0 (W, R, D)$ and $c = c (W, R, D)$ such that if $\la > \la_0$, then:
 \begin{equation}\label{proof-adj}
 L_k (\tilde{A}_{\la, E} ) \ge \log \la - c \quad \text{for all } E\in \R, \ 1 \le k \le d 
 \end{equation}

\smallskip

From the definition of the cocycle \eqref{J-adjcocycle}, it is clear that
$$
\tilde{A}_{\la, E} (x) = g (x+\om) \cdot A_{\la, E} (x)
$$
Then if 
$$\tilde{M}_n (x) = \tilde{M}_n (x ; \la, E) := \prod_{j=n-1}^{0} \, \tilde{A}_{\la, E} (x+j \om) $$
are the transfer matrices of the cocycle $\tilde{A}_{\la, E} (x)$, we have
\begin{equation*}
\frac{1}{n} \, \log \norm{\tilde{M}_n (x)} = \frac{1}{n} \, \sum_{j=1}^{n} \log \abs{g (x+ j \om)} + 
\frac{1}{n} \, \log \norm{M_n (x)} 
\end{equation*}
and
\begin{equation*}
\frac{1}{n} \, \log \norm{\wedge_k \, \tilde{M}_n (x)} = k \cdot \frac{1}{n} \, \sum_{j=1}^{n} \log \abs{g (x+ j \om)} + 
\frac{1}{n} \, \log \norm{\wedge_k \, M_n (x)} 
\end{equation*}
for all $1 \le k \le d$.

Then for every $1 \le k \le d$ we have:
\begin{equation}\label{dec-cocycle-poof}
L_k (\tilde{A}_{\la, E}) = L (g) + L_k (A_{\la, E}) 
\end{equation} 
where $L (g)$ is the Lyapunov exponent of the one dimensional cocycle $ g (x)$.

Since $g(x)$ has a holomorphic extension to the annulus $\Ann_\rho$, and since $g (x) \not \equiv 0$, it is easy to see by factoring out its zeros that $\log \abs{g (x)} \in L^1 (\T)$. In particular, by Birkhoff's ergodic theorem, 
$$\lim_{n \to \infty} \, \frac{1}{n} \, \sum_{j=1}^{n} \log \abs{g (x+ j \om)}  = \int_{\T} \, \log \abs{g (x)} d x \quad \text{ for a.e. } x \in \T $$
hence
\begin{equation}\label{proof-ap-g}
L (g) =  \int_{\T} \, \log \abs{g (x)} d x = \int_{\T} \, \log \abs{\det [W (x) ] } d x  
\end{equation}
which is a real number that depends on $W (x)$.

Then the  bound \eqref{main-ap-c} on the first $d$ Lyapunov exponents in the conclusion of the theorem follows from \eqref{dec-cocycle-poof}, \eqref{proof-adj} and \eqref{proof-ap-g}.

\smallskip

To conclude that the other $d$ Lyapunov exponents are the additive inverses of the first $d$ exponents, we will show that  the cocycle $A_{\la, E} (x)$ in \eqref{J-cocycle} is conjugated to a {\em symplectic} cocycle, for which this property holds automatically.

Indeed, as before, replace $E$ by $E / \la$ and denote $V (x) := D (x) - \la^{-1} \, R (x)$. We can write the Schr\"{o}dinger equation \eqref{J-eq-ap} in the form:
$$
\left[ \begin{array}{cc}
W_{n+1} (x) \vpsi_{n+1} \\
\\
\vpsi_n 
 \end{array} \right]  =   A^W_{\la, E} (x + n \om)  \cdot
 \left[ \begin{array}{cc}
W_n (x)  \vpsi_{n} \\
\\
\vpsi_{n-1} 
 \end{array}\right]  
$$
where
\begin{equation}\label{sadel-cocycle}
A^W_{\la, E} (x) := \left[ \begin{array}{cc}
\la \, ( V (x) -  E\cdot I ) \, W^{-1} (x)   &    - W^T (x) \\ 
&  \\ 
W^{-1} (x)   &   \zero \\  \end{array} \right] 
  \in \symp_{d} (\R)
\end{equation}

The cocycle $A_{\la, E} (x)$ which corresponds to solving the Schr\"{o}dinger equation \eqref{J-eq-ap} for the vector  $\left[ \begin{array}{cc}
  \vpsi_{n+1} 
\\
\vpsi_{n} 
 \end{array} \right]$ 
is conjugated to the ``weighted'' cocycle $A^W_{\la, E} (x)$ which corresponds to solving the same equation for the ``weighted" vector $\left[ \begin{array}{cc}
W_{n+1} (x) \, \vpsi_{n+1} \\
\vpsi_{n} 
 \end{array} \right]$.
 
 Indeed, a simple calculation shows that
 \begin{equation}\label{conjugation}
 A_{\la, E} (x) = [ C (x + \om) ]^{-1} \cdot  A^W_{\la, E} (x) \cdot C (x)
 \end{equation}
 where
 $$C (x) :=   \left[ \begin{array}{ccc}
W (x)  &    \zero \\
\\
 \zero  &  I  \\  \end{array} \right] $$
 
Since $\det [ C (x) ] = \det [ W (x) ] = g (x)$, and since $\log \abs{g (x)} \in L^1 (\T)$, it is easy to verify that 
$\log \norm{C (x)}, \ \log \norm{[ C (x) ]^{-1}} \in L^1 (\T)$. Then from \eqref{conjugation} we conclude that the cocycle $ A_{\la, E} (x) $ has the same Lyapunov exponents as the symplectic cocycle $ A^W_{\la, E} (x)$.

\end{proof}

\begin{remark}\rm{ If in Theorem~\ref{main-ap} we assume that the potential $D (x) =  \text{diag } [v_1 (x), \ldots,  v_d (x)]$ is a diagonal matrix, then the assumption \eqref{TC-D} on $D (x)$ having non-constant eigenvalues simply means that the diagonal entries $v_j (x)$ are non-constant functions.

If,  moreover, we let  $R (x) \equiv R$ and $W (x) \equiv I$, then we obtain the aforementioned result of I. Ya. Goldsheid and E. Sorets (see \cite{SG}).}
\end{remark}

\medskip

%%%%%%%%%%%%%% Complexification %%%%%%%%%%%%%%%%%%%%%
%%%%%%%%%%%%%%%%%%%%%%%%%%%%%%%%%%%%%%%%%%%

Since our main statements in section~\ref{main_section} hold for real-valued cocycles of \textit{any} dimension, they can be extended to hold for complex-valued cocycles as well. Thus we will prove complex versions of theorems~\ref{main}, \ \ref{main-allE}, as well as of its main consequence,  theorem~\ref{main-ap}.

\medskip

To each complex number $a+i b\in\C$
we associate  the matrix
$$ M_{a+i b}:= \left[ \begin{array}{cc}
a & -b\\
b & a  
\end{array}\right]\in\Mat_2(\R)\;.$$
The set $\mathscr{C}=\{\, M_z\in \Mat_2(\R)\, :\, z\in\C\,\}$  is
an algebra isomorphic to the complex field $\C$.
Given any  matrix $A\in\Mat_d(\C)$, we call its {\em realification}
the real matrix
$\widetilde{A}\in\Mat_{2d}(\R)$ obtained by replacing each complex entry $a_{ij}$
by the real matrix block  $M_{a_{ij}}\in\Mat_2(\R)$.
For any square (real or complex) matrix $A$, we denote by ${\rm spec}(A)$ the spectrum of $A$, and by ${\rm sing}(A)$ its singular spectrum, i.e., the set of
singular values of $A$. A complex matrix $A\in\Mat_d(\C)$ is called {\em hermitian}
if  $A^\ast=A$. We shall denote by $\Herm_d(\C)$ the real vector space
of hermitian matrices in $\Mat_d(\C)$.

\begin{proposition}\label{matrix:realification}
 Given $A\in\Mat_d(\C)$, the realification
 $\widetilde{A}$ of $A$ satisfies the following relations:
\begin{enumerate}
\item[(a)] $\displaystyle  \det(\widetilde{A}) =  \abs{ \rm{det}_{\C}(A)}^2$;
\item[(b)] ${\rm spec}(\widetilde{A})= {\rm spec}(A)\cup {\rm spec}(\overline{A})$;
\item[(c)] ${\rm sing}(\widetilde{A})= {\rm sing}(A)$,
 but each singular value of $\widetilde{A}$ has twice the multiplicity than it has as a singular value of $A$;
\item[(d)] $\displaystyle  \|\widetilde{A}\| =  \norm{A} $\, and \, 
 $\displaystyle m( \widetilde{A} ) =  m(A) $;
\item[(e)] $\widetilde{A}\in\Sym_{2d}(\R)$ \; $\Leftrightarrow$\; $A\in\Herm_d(\C)$.
\end{enumerate}
\end{proposition}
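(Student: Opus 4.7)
The plan is to reduce all five statements to a single structural identity: after an orthogonal change of basis and a unitary conjugation, $\widetilde{A}$ is similar over $\C$ to the block-diagonal matrix $\begin{pmatrix} A & 0 \\ 0 & \overline{A} \end{pmatrix}$. The underlying reason is that $\widetilde{A}$ represents the $\R$-linear action of $A$ on $\C^d\cong\R^{2d}$, and after extending scalars back to $\C$ this action splits along the $\pm i$ eigenspaces of the ambient complex structure, where $\widetilde{A}$ acts respectively as $A$ and as $\overline{A}$.

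First I would permute the real basis from the interleaved ordering used in the definition to the block ordering $(e_1,\ldots,e_d,f_1,\ldots,f_d)$, where $e_j,f_j$ are the real and imaginary parts of the $j$-th complex coordinate. In this new ordering $\widetilde{A}$ becomes $\widetilde{A}' = \begin{pmatrix} A_r & -A_i \\ A_i & A_r \end{pmatrix}$, with $A = A_r + i A_i$. This permutation is orthogonal and acts simultaneously on rows and columns, so it preserves determinant, spectrum, singular values, operator norm, minimum expansion, and symmetry. Next I would introduce the unitary matrix $P = \tfrac{1}{\sqrt{2}}\begin{pmatrix} I & I \\ iI & -iI \end{pmatrix}\in\Mat_{2d}(\C)$ and verify by direct computation that $P^{\ast}\,\widetilde{A}'\,P = \begin{pmatrix} \overline{A} & 0 \\ 0 & A \end{pmatrix}$.

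From this single identity each of (a)--(e) follows readily. Claim (a) is $\det(\widetilde{A}) = \det(\overline{A})\det(A) = \overline{\det_{\C}(A)}\cdot\det_{\C}(A) = |\det_{\C}(A)|^2$. Claim (b) holds because similarity preserves spectrum with multiplicity. For (c) and (d), since $\widetilde{A}$ is real we have $\widetilde{A}^{T}=\widetilde{A}^{\ast}$; the same unitary conjugation sends $\widetilde{A}^{\ast}\widetilde{A}$ to the block-diagonal matrix $\mathrm{diag}(\overline{A}^{\ast}\overline{A},\,A^{\ast}A)$, and since $\overline{A}^{\ast}\overline{A} = \overline{A^{\ast}A}$ shares the (real) eigenvalues of $A^{\ast}A$, the squared singular values of $A$ each appear with doubled multiplicity as eigenvalues of $\widetilde{A}^{\ast}\widetilde{A}$; both (c) and the norm/minimum-expansion equalities in (d) follow. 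For (e), the identity $M_{a+ib}^{T} = M_{a-ib} = M_{\overline{a+ib}}$ shows that $(\widetilde{A})^{T}$ equals the realification of $A^{\ast}$; by injectivity of the realification map, $\widetilde{A}\in\Sym_{2d}(\R)$ is equivalent to $A=A^{\ast}$, i.e.\ $A\in\Herm_d(\C)$.

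The main obstacle is essentially bookkeeping: verifying that the permutation relating the interleaved and block representations does not disturb any of the invariants in question, and keeping multiplicities straight in (b) and (c). Beyond this there is no substantive difficulty — the proposition is the standard dictionary between $\C$-linear and $\R$-linear actions, which the subsequent section exploits to extend Theorems~\ref{main} and \ref{main-allE} to complex-valued cocycles.
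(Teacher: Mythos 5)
Your proof is correct, and it takes a genuinely different route from the paper's. Your key step is to exhibit a single unitary conjugation $P^{\ast}\widetilde{A}'P=\mathrm{diag}(\overline{A},A)$ and then read off parts (a)--(d) at once. The paper instead argues piecemeal: it proves (a) via a $2\times2$ block-determinant formula, deduces (b) by applying (a) to $A-\lambda I$, obtains (c) from (b) by observing that realification is an algebra homomorphism with $\widetilde{A^\ast}=\widetilde{A}^T$ (so $\widetilde{A^\ast A}=\widetilde{A}^T\widetilde{A}$), derives (d) from (c), and proves (e) by the same direct computation you use. Your structural argument is arguably cleaner; indeed the paper's chain for (a),
\[
\det\left[\begin{array}{cc}A_r&-A_i\\A_i&A_r\end{array}\right]=\det(A_r^2+A_i^2)={\det}_\C\bigl[(A_r+iA_i)(A_r-iA_i)\bigr]=\abs{{\det}_\C(A)}^2\;,
\]
uses the intermediate identities $\det\left[\begin{smallmatrix}A_r&-A_i\\A_i&A_r\end{smallmatrix}\right]=\det(A_r^2+A_i^2)$ and $(A_r+iA_i)(A_r-iA_i)=A_r^2+A_i^2$, both of which tacitly assume $A_rA_i=A_iA_r$; the endpoints are nonetheless equal, and your conjugation (or the upper-triangularizing similarity by $\left[\begin{smallmatrix}I&iI\\0&I\end{smallmatrix}\right]$ on the left and its inverse on the right) supplies a derivation that avoids this commutativity issue. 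The tradeoff is that the paper's route is marginally more elementary, staying at the level of determinant identities and the homomorphism property, whereas yours requires exhibiting the explicit change of basis and tracking the resulting block-diagonal form.
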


\begin{proof}
Given $A+iB\in\Mat_d(\C)$, with $A,B\in\Mat_d(\R)$, its realification  
 is conjugate (by row and column permutation) to the block matrix
$\left[ \begin{array}{cc}
A & -B\\
B & A  
\end{array}\right]$.\,
Hence, using the determinant rule for $2\times 2$ block matrices 
\begin{align*}
\det(\widetilde{A+i B}) &= \det \left[ \begin{array}{cc}
A & -B\\
B & A  
\end{array}\right] = \det (A^2-B\,(-B))\\
&= \det (A^2+B^2)= {\det}_\C [(A+i B)(A-i B)]\\
&= {\det}_\C (A+i B)\,\overline{{\det}_\C (A+i B)}= \abs{{\det}_\C (A+i B)}^2\;.
\end{align*}
To prove part (b) just remark that
$$ \det(\widetilde{A}-\lambda\, I)= \abs{{\det}_\C(A-\lambda\,I)}^2 =
{\det}_\C(A-\lambda\,I)\, {\det}_\C(\overline{A}-\overline{\lambda}\,I)\;. $$
For the next part we observe that the realification process, $A\mapsto \widetilde{A}$,
is an algebra homomorphism such that $\widetilde{A^\ast}=\widetilde{A}^T$.
Then part (b) implies (c) because the singular values of $A$ are the square roots of
eigenvalues of $A^\ast\,A$, the singular values of $\widetilde{A}$ are the square roots of
eigenvalues of $\widetilde{A}^T\,\widetilde{A}$, and we have $\widetilde{A^\ast\,A}=\widetilde{A}^T\,\widetilde{A}$.
Part (d) is a consequence of (c) because $\norm{A}$ and $m(A)$ are, respectively,
the largest and smallest singular values of a matrix $A$.
Finally, to prove (e) take a complex matrix $A+iB$, with $A,B\in\Mat_d(\R)$.
Identifying $\widetilde{A+i B}$ with $\left[ \begin{array}{cc}
A & -B\\
B & A  
\end{array}\right]$, this matrix is symmetric if and only if $A^T=A$ and $B^T=-B$,
which is equivalent to $A+i B$ being hermitian.
\end{proof}

We define the {\em realification} of 
a complex potential $V:\T\to\Herm_d(\C)$ and that of  
a complex cocycle $A:\T\to \Mat_m(\C)$ to be 
$\widetilde{V}:\T\to\Sym_{2d}(\R)$ and respectively $\widetilde{A}:\T\to \Mat_{2m}(\R)$, where
these functions assign to each $x\in\T$ the realification $\widetilde{V}(x)$ of $V(x)$ and respectively
the realification $\widetilde{A}(x)$ of $A(x)$.

\begin{proposition} \label{LE:realification}
Any integrable cocycle $A:\T\to\Mat_m(\C)$ has the same
Lyapunov exponents as its  realification $\widetilde{A}$. 
More precisely, 
$$L^{(i)}(A) = L^{(2i-1)}(\widetilde{A}) = L^{(2i)}(\widetilde{A})\; \text{ for }\; 1\leq i\leq m\;. $$
\end{proposition}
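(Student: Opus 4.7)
The plan is to reduce everything to the singular-value formula for Lyapunov exponents, and then to exploit part (c) of Proposition~\ref{matrix:realification} together with the fact that realification is an algebra homomorphism.

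First I would observe that the realification map $A\mapsto \widetilde{A}$ is multiplicative: for any $A,B\in\Mat_m(\C)$ one has $\widetilde{A\,B}=\widetilde{A}\,\widetilde{B}$. Applying this inductively to the iterates
$$M_n(x)=A(x+(n-1)\omega)\cdots A(x+\omega)\,A(x)$$
gives $\widetilde{M_n(x)}=\widetilde{A}(x+(n-1)\omega)\cdots \widetilde{A}(x+\omega)\,\widetilde{A}(x)$, which is exactly the $n$-th iterate of the cocycle $\widetilde{A}$. Integrability of $\widetilde{A}$ follows at once from $\|\widetilde{A}(x)\|=\|A(x)\|$ and $\|\widetilde{A}(x)^{-1}\|=m(\widetilde{A}(x))^{-1}=m(A(x))^{-1}=\|A(x)^{-1}\|$, which are parts (d) of Proposition~\ref{matrix:realification} together with the algebra-homomorphism property applied to $A\cdot A^{-1}=I$.

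Next I would apply part (c) of Proposition~\ref{matrix:realification} to the matrix $M_n(x)$. If $s_n^{(1)}(x)\geq s_n^{(2)}(x)\geq\ldots\geq s_n^{(m)}(x)$ are the singular values of $M_n(x)$, then the singular spectrum of $\widetilde{M_n(x)}$ coincides with that of $M_n(x)$, but with every multiplicity doubled. Consequently, if $\tilde{s}_n^{(1)}(x)\geq \tilde{s}_n^{(2)}(x)\geq \ldots\geq \tilde{s}_n^{(2m)}(x)$ are the singular values of $\widetilde{M_n(x)}$ listed with multiplicities, one has
$$\tilde{s}_n^{(2i-1)}(x)=\tilde{s}_n^{(2i)}(x)=s_n^{(i)}(x)\qquad\text{for every }1\leq i\leq m.$$

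Finally I would invoke formula~\eqref{formula-lyap-i}, which expresses each Lyapunov exponent as the a.e.\ limit $\frac{1}{n}\log$ of the corresponding singular value. Taking limits in the identity above gives
$$L^{(2i-1)}(\widetilde{A})=\lim_{n\to\infty}\frac{1}{n}\log \tilde{s}_n^{(2i-1)}(x)=\lim_{n\to\infty}\frac{1}{n}\log s_n^{(i)}(x)=L^{(i)}(A),$$
and likewise $L^{(2i)}(\widetilde{A})=L^{(i)}(A)$, which is the claim.

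There is no real obstacle here: the only point to watch is that the multiplicative property of realification is used on matrix products of arbitrary length and that the ``doubling of multiplicities'' in part (c) is the correct book-keeping for indexing singular values. Once these are in place, the proof is a single line of substitution into~\eqref{formula-lyap-i}.
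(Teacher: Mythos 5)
Your proof is correct and follows essentially the same route as the paper's: both rely on formula~\eqref{formula-lyap-i} (Lyapunov exponents via singular values) together with item (c) of Proposition~\ref{matrix:realification} (doubling of multiplicities under realification). The paper states this in one sentence; you simply spell out the intermediate bookkeeping (multiplicativity of $A\mapsto\widetilde{A}$, integrability of $\widetilde{A}$, and the indexing of the doubled singular values), all of which is sound.
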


\begin{proof}
The conclusion follows from the characterization of the Lyapunov exponents in terms of
singular vaules, mentioned in the introduction, and item (c) of proposition~\ref{matrix:realification}. 
\end{proof}

\begin{proposition} Given $V:\T\to\Herm_d(\C)$ and $A:\T\to\Mat_m(\C)$,
\begin{enumerate}
\item[(a)] \; $\widetilde{V}\in C_\rho^\omega(\T,\Sym_{2d}(\R))$\;
$\Leftrightarrow$\;  $V\in C_\rho^\omega(\T, \Herm_d(\C))$;
\item[(b)] \; $\widetilde{A}\in C_\rho^\omega(\T,\Mat_{2m}(\R))$\;
$\Leftrightarrow$\;  $A\in C_\rho^\omega(\T, \Mat_m(\C))$.
\end{enumerate}
\end{proposition}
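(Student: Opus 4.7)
The plan is to reduce both parts to an entrywise characterization of $C^\omega_\rho$ membership. First observe that the realification map $R:\Mat_m(\C)\to\Mat_{2m}(\R)$ is an $\R$-linear operation which replaces each complex entry $a+ib$ of $A$ by the $2\times 2$ real block $M_{a+ib}$ in the corresponding $2\times 2$ slot of $\widetilde A$. In particular, every real entry of $\widetilde A(x)$ is, up to sign, either the real part or the imaginary part of some complex entry of $A(x)$; conversely, every real and imaginary part of every entry of $A(x)$ appears as an entry of $\widetilde A(x)$.

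Next, I would invoke the standard fact that for a finite-dimensional real vector space $\mathscr{M}$ with a chosen basis, a function $f:\T\to\mathscr{M}$ lies in $C^\omega_\rho(\T,\mathscr{M})$ if and only if each of its scalar coordinates lies in $C^\omega_\rho(\T,\R)$, independently of the chosen basis. Under the identification $\C\cong\R\oplus i\R$, this means that a complex-valued function is in $C^\omega_\rho(\T,\C)$ iff both its real and imaginary parts lie in $C^\omega_\rho(\T,\R)$. Chaining these equivalences, $A\in C^\omega_\rho(\T,\Mat_m(\C))$ iff every complex entry of $A$ lies in $C^\omega_\rho(\T,\C)$, iff every real/imaginary part of every entry lies in $C^\omega_\rho(\T,\R)$, iff every real entry of $\widetilde A$ lies in $C^\omega_\rho(\T,\R)$, iff $\widetilde A\in C^\omega_\rho(\T,\Mat_{2m}(\R))$. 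This proves (b). For (a), part (e) of Proposition~\ref{matrix:realification} says that $R$ carries $\Herm_d(\C)$ bijectively onto its image in $\Sym_{2d}(\R)$, so the same chain of equivalences restricts to give the claim.

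I do not anticipate any real obstacle; the statement is essentially a compatibility between an $\R$-linear isometric embedding at the matrix level (Proposition~\ref{matrix:realification}) and the coordinatewise definition of $C^\omega_\rho$. The one point deserving a moment's care is that the holomorphic extension $\widetilde V(z)$ for $z\in\mathrm{int}(\Ann_\rho)$ naturally takes values in $\Sym_{2d}(\R)\otimes\C=\Sym_{2d}(\C)$ rather than in $\Sym_{2d}(\R)$, but this is exactly the convention $\mathscr{M}^\C=\mathscr{M}\otimes\C$ used throughout the paper, and the $\C$-linear extension of $R$ to the complexifications absorbs it automatically. By Proposition~\ref{matrix:realification}(d), $R$ is moreover an isometry, so the map $V\mapsto\widetilde V$ (and $A\mapsto\widetilde A$) is an isometric $\R$-linear embedding of Banach spaces realizing each stated equivalence.
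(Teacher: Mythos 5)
Your argument is correct, and since the paper dismisses this proposition with the single line ``The proof is straightforward,'' your entrywise reduction is exactly the intended (and essentially only) approach: membership in $C^\omega_\rho(\T,\mathscr{M})$ is a coordinatewise condition, and the realification simply rearranges the real/imaginary parts of the entries of $A$ into the real entries of $\widetilde A$, with Proposition~\ref{matrix:realification}(e) handling the $\Herm_d(\C)\leftrightarrow\Sym_{2d}(\R)$ correspondence for part (a).
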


\begin{proof}
The proof is straighforward.
\end{proof}

\medskip

With the obvious interpretations of the basic assumptions (\ref{unifbound})-(\ref{TC-allE}),
the main theorems of section~\ref{main_section} extend from real matrix valued functions to complex  matrix valued functions. More precisely   the following holds.

\begin{theorem} \label{main-Complex}
The statements of theorems~\ref{main} and ~\ref{main-allE} hold for cocycles
$A\in C_\rho^\omega(\T, \Mat_m(\C))$ of the form~\eqref{(T,A)}. In  theorem ~\ref{main-allE}
we assume $V:\T\to\Herm_d(\C)$. 
\end{theorem}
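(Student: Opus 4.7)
The plan is to reduce the complex statement to the real one via the realification construction that the authors have already set up. Given a complex cocycle $A \in C_\rho^\omega(\T, \Mat_m(\C))$ of the block form \eqref{cocycle} or \eqref{cocycle-allE}, I would work with its realification $\widetilde{A} \in C_\rho^\omega(\T, \Mat_{2m}(\R))$. Since realification is an algebra homomorphism and maps $I_d \mapsto I_{2d}$, the block decomposition of $A$ induces (after a permutation of coordinates) an analogous block decomposition of $\widetilde{A}$ whose upper left corner is the $2d$-dimensional block $\la \widetilde{V}$ in the setting of Theorem~\ref{main}, or $\la\,\widetilde{\Dmat}(\widetilde{V}-E\,I_{2d})$ in the setting of Theorem~\ref{main-allE}. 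By Proposition~\ref{matrix:realification}(e), if $V$ is hermitian then $\widetilde{V}$ is symmetric, so the symmetry hypothesis of Theorem~\ref{main-allE} is preserved.

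Next I would verify that the quantitative hypotheses on the complex blocks control the corresponding hypotheses on their real realifications. The sup-norm bounds transfer verbatim by Proposition~\ref{matrix:realification}(d). For the transversality and the $N_\rho,\beta_\rho$ measurements, the key identity is that, on the torus, $\det\widetilde{V}(x) = \abs{\det_\C V(x)}^2$, and by analytic continuation one has in the annulus
\begin{equation*}
\det\widetilde{V}(z) \;=\; h(z)\,h_\ast(z), \qquad h(z):=\det_\C V(z), \quad h_\ast(z):=\overline{h(\bar z)}.
\end{equation*}
Because $\Ann_\rho$ is invariant under $z\mapsto\bar z$ and $h_\ast$ is holomorphic with the same multiplicities at the conjugate points, one obtains $N_\rho(\widetilde V)=2\,N_\rho(V)$ and, using Proposition~\ref{N,beta:additivity} together with the conjugation symmetry of $\Ann_\rho$, $\beta_\rho(\widetilde V)\ge \beta_\rho(V)^2$. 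Applying the same argument to $V-E\,I$ with $E\in\R$ (so that $V-EI$ remains hermitian and the analogous factorization of $\det\widetilde{V}(z)-E I$ holds), one gets $\widehat{N}_\rho(\widetilde V)=2\,\widehat{N}_\rho(V)$ and $\widehat{\beta}_\rho(\widetilde V)\ge \widehat{\beta}_\rho(V)^2$. An identical treatment handles $\widetilde\Dmat$. In particular the complex transversality conditions imply the real ones.

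With all hypotheses in place for $\widetilde A$, the already-proved real Theorems~\ref{main} and~\ref{main-allE} yield positive lower bounds on the $2d$ largest Lyapunov exponents of $\widetilde A$ of the form
\begin{equation*}
L^{(k)}(\widetilde A) \;\ge\; \log\la \,-\, k\,c' \qquad (1\le k\le 2d),
\end{equation*}
where $c'$ is computable from the measurements of the realified blocks (which are in turn controlled by the measurements assumed on the complex blocks). By Proposition~\ref{LE:realification} the Lyapunov spectrum of $\widetilde A$ consists of the Lyapunov exponents of $A$ each repeated with multiplicity two, so $L^{(k)}(A)=L^{(2k)}(\widetilde A)$. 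Substituting into the bound above gives $L^{(k)}(A)\ge \log\la - 2k\,c'$ for $1\le k\le d$, which is the desired conclusion (with constants depending only on the corresponding complex measurements, through the transfer estimates above).

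The main technical point, and the only place genuine work is needed, is the transfer of the $\beta_\rho$-type lower bounds: $\widetilde V(z)$ for complex $z$ is \emph{not} the realification of the complex matrix $V(z)$, but the holomorphic extension of the real-analytic function $\widetilde V|_\T$, so the identity $\det\widetilde V(z)=h(z)\,h_\ast(z)$ must be justified by analytic continuation from the torus (where both sides agree with $\abs{h}^2$) rather than computed pointwise from Proposition~\ref{matrix:realification}(a). Once this identity is in hand, all four measurements transfer cleanly and the rest of the argument is formal.
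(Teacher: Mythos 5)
Your proposal follows the same reduction as the paper: realify the cocycle, verify that the complex hypotheses transfer to the real ones, apply Theorems~\ref{main} and~\ref{main-allE} to $\widetilde A$, and pull back the bound on $L^{(2k)}(\widetilde A)$ via Proposition~\ref{LE:realification}. The structure of the argument is identical.

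Where you diverge is in the bookkeeping of the measurements, and there you are actually more careful (and more correct) than the paper. The paper asserts flatly that $N_\rho(\widetilde V)=N_\rho(V)$ and $\beta_\rho(\widetilde V)=\beta_\rho(V)$, and that $m(\widetilde\Lambda(z))=m(\Lambda(z))$ for $z$ off the torus; but, as you observe, for $z\notin\T$ the holomorphic extension $\widetilde V(z)$ is \emph{not} the realification of $V(z)$, so Proposition~\ref{matrix:realification} cannot simply be evaluated pointwise. Your route via the factorization of $\det\widetilde V(z)$, justified by analytic continuation from the torus, is the right way to control the measurements, and the correct conclusion is $N_\rho(\widetilde V)=2\,N_\rho(V)$ and $\beta_\rho(\widetilde V)=\beta_\rho(V)^2$ --- not the equalities claimed in the paper. (This discrepancy is harmless for the theorem, which only needs $N<\infty$ and $\beta>0$; the constants just change.) One small simplification worth noting: since $V(x)$ is Hermitian on $\T$, $h=\det_\C V$ is already real on $\T$, so by Schwarz reflection $h_\ast=h$ and your factorization is simply $\det\widetilde V(z)=h(z)^2$, giving the above relations with equality. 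Also, be mindful of the coordinate you are working in: the involution $z\mapsto\bar z$ is the Schwarz reflection only in the strip picture of $\T$; in the annulus picture used throughout the paper the relevant involution is $z\mapsto 1/\bar z$. Since $h_\ast=h$ this never causes trouble here, but the phrase ``$\Ann_\rho$ is invariant under $z\mapsto\bar z$'' on its own does not capture the reflection symmetry across the torus.
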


\begin{proof}
Given a hermitian potential $V:\T\to\Herm_d(\C)$, remark that  $N_\rho(\widetilde{V})=N_\rho(V)$,
$\beta_\rho(\widetilde{V})=\beta_\rho(V)$, $\|\widetilde{V}\|_\rho=\norm{V}_\rho$,
$\|\widetilde{\Lambda}\|_\rho=\norm{\Lambda}_\rho$, etc.
Also, for every $x\in\T$,  $\det \widetilde{V}(x)=\abs{\det_\C V(x)}^2$,
and for every $z\in\strip_\rho$,  $m(\widetilde{\Lambda}(z))=m(\Lambda(z))$.
Hence the assumptions (\ref{unifbound})-(\ref{TC-allE}) of theorems~\ref{main} and ~\ref{main-allE} on the
complex cocycle $A$ imply the corresponding assumptions for its realification $\widetilde{A}$.
Applying these theorems we derive the wanted conclusion on the real cocycle $\widetilde{A}$,
which by proposition~\ref{LE:realification} implies the corresponding conclusion for the complex cocycle $A$.
\end{proof}

\begin{remark}\rm{
The same complex extension, with obvious interpretations, holds for our main application, theorem~\ref{main-ap}. We of course have to assume that  $W \in  C_\rho^\omega(\T, \Mat_d(\C))$ and that $R, \, D \in  C_\rho^\omega(\T, \Herm_d(\C))$.}
\end{remark}

%\smallskip

\section*{Acknowledgments}

The authors were partially supported by Funda\c c\~ao para a Ci\^encia e a Tecnologia through the Program POCI 2010 and the Project ``Randomness in Deterministic Dynamical Systems and Applications'' (PTDC-MAT-105448-2008).

 The second author would like to thank his hosts at Universidade de Lisboa, Portugal for their support and hospitality. He is also grateful to Christian Sadel for a useful conversation, and in particular for his description of an earlier version of the model \eqref{J-op}. 

The current version of this paper benefited greatly from the feedback the second author has received during his talks at Caltech and UC Irvine. In particular, he would like to acknowledge: Barry Simon for suggesting the use of Hardy's convexity theorem in proving Theorem~\ref{subharmonic}; Christoph Marx for suggesting that the weight function $W (x)$ in Theorem~\ref{main-ap} may not need to be invertible everywhere; Rupert Frank for clarifying what would be the more physically relevant application of our main result.

\smallskip

%\newpage
%\thispagestyle{empty}

\nocite{*}
\bibliographystyle{amsplain} 
\bibliography{references}

\end{document}